\newlength{\defbaselineskip}
\newcommand{\setlinespacing}[1]%
           {\setlength{\baselineskip}{#1 \defbaselineskip}}
\newcommand{\N}{{\mathbb{N}}}
\newcommand{\actaqed}{\hfill $\actabox$}
{\medskip\noindent \textit{Proof of #1. }}%
{\actaqed \medskip}
\def\D{{\mathcal D}}
\def\cB{{\mathcal B}}
\def\C{{\mathcal C}}
\def\cC{{\mathcal C}}
\def \Tr{\mathcal T}
\def \cX{\mathcal X}
\def\R{{\mathbb R}}
\def\Z{\mathbb Z}
\def \T{\mathbb T}
\def\bbC{\mathbb C}
\def \<{\langle}
\def\>{\rangle}
\def\Og{\Omega}
\def \ff{\varphi}
\def\bt{\beta}
\def \ro{\varrho}
\def\la{\lambda}
\def \sp{\operatorname{span}}
\def\bx{\mathbf x}
\def\by{\mathbf y}
\def\bz{\mathbf z}
\def\bk{\mathbf k}
\def\bw{\mathbf w}
\def\bF{\mathbf F}
\def\bt{\beta}
\DeclareMathOperator\vol{vol}
\newcommand\LD{\mathcal{LD}}
\newcommand\RD{\mathcal{RD}}
\newcommand\NI{\mathrm{NI}}
\newtheorem{Theorem}{Theorem}[section]
\newtheorem{Lemma}{Lemma}[section]
\newtheorem{Definition}{Definition}[section]
\newtheorem{Proposition}{Proposition}[section]
\newtheorem{Remark}{Remark}[section]
\newtheorem{Example}{Example}[section]
\newtheorem{Corollary}{Corollary}[section]
\numberwithin{equation}{section}
\newtheorem{oldTheorem}{Theorem}[section]
\newtheorem{oldProposition}{Proposition}[section]
\newcommand{\be}{\begin{equation}}
\newcommand{\ee}{\end{equation}}
\begin{document}

\title{One-sided discretization inequalities and sampling recovery}

\author{ I. Limonova, Yu. Malykhin, and   V. Temlyakov 	\footnote{
		 This research  was supported by the Russian Science Foundation (prpject no. 23-71-30001) at Lomonosov Moscow State University  (https://rscf.ru/project/23-71-30001/).
  }}

\newcommand{\Addresses}{{
  \bigskip
  \footnotesize
  \medskip
  I. Limonova, \textsc{ Steklov Institute of Mathematics,\\  Lomonosov Moscow State University,\\ and Moscow Center for Fundamental and Applied Mathematics.\\ 
E-mail:} \texttt{limonova\_irina@rambler.ru}

\medskip
Yu. Malykhin, \textsc{ Steklov Institute of Mathematics,\\  Lomonosov Moscow State University.
\\
E-mail:} \texttt{malykhin@mi-ras.ru}

 \medskip
  V.N. Temlyakov, \textsc{ Steklov Institute of Mathematics,\\  Lomonosov Moscow State University,\\ and Moscow Center for Fundamental and Applied Mathematics,\\ University of South Carolina.
  \\
E-mail:} \texttt{temlyak@math.sc.edu}

}}
\maketitle

\begin{abstract}
	{Recently, in a number of papers it was understood that results on sampling discretization and on the universal sampling discretization can be successfully used in the problem of sampling recovery. Moreover, it turns out that it is sufficient to only have a one-sided discretization inequality for some of those applications. This motivates us to write the present paper as a survey paper, which includes new results, with the focus on the one-sided discretization inequalities and their applications in the sampling recovery. In this sense the paper complements the two existing survey papers on sampling discretization. }
\end{abstract}

{\it Keywords and phrases}: Sampling discretization, Nikol'skii inequality, recovery.

{\it MSC classification 2000:} Primary 65J05.

\section{Introduction} 
\label{I}

There has been significant progress in the study of sampling discretization of integral norms for  both  a designated  finite-dimensional function space and   a finite collection of such function spaces (universal discretization).  Sampling discretization results  turn out to be very useful  in various applications, particularly in  sampling recovery. For recent results 
on sampling recovery we refer the reader to
\cite{CM}, \cite{KU}, \cite{KU2},
\cite{NSU}, \cite{VT183}, \cite{CoDo}, \cite{KUV}, \cite{TU1}, \cite{LimT},
\cite{JUV}, \cite{DKU},  \cite{DTM1}, \cite{DTM2}, \cite{DTM3}, \cite{BSU}, \cite{KPUU},
\cite{KPUU2}, \cite{VT2023}, \cite{KT24}, \cite{T24}.

The systematic study of sampling discretization of the $L_p$-norms of functions from finite-dimensional subspaces has begun in 
2017 in \cite{VT158} and \cite{VT159}. First results in this direction were obtained in the 1930s and go back to S.~Bernstein, J.~Marcinkiewicz, and A.~Zygmund. At the moment, this area has developed into a vast and actively growing field of research with deep connections with other important areas of research. There are two survey papers  \cite{DPTT} and \cite{KKLT} on the topic. Paper \cite{DPTT} covers results on exact weighted discretization, discretization of the uniform norm of the multivariate trigonometric polynomials, and some results on the universal discretization. Paper \cite{KKLT} covers recent results on sampling discretization and discusses in detail their connections with other areas of research, in particular, with the theory of moments of random vectors, the theory of submatrices  {with good properties}, embedding of finite-dimensional subspaces, sparse approximation, learning theory, and  sampling recovery.
Recently, in a number of papers it was understood that results on sampling discretization and on the universal sampling discretization can be successfully used in the problem of sampling recovery. Moreover, it turns out that it is sufficient to only have a one-sided discretization inequality for some of those applications. This motivates us to write the present paper on the one-sided discretization inequalities and their applications in the sampling recovery. In this sense the paper complements the survey papers \cite{DPTT} and \cite{KKLT}.

Let $(\Omega,\mu)$ be a probability space. 
We consider measurable functions that are defined at each point of $\Omega$ and we
do not identify functions that differ on a set of zero measure. (See also~\cite[Section~2]{KKLT}, for a
justification.) By the
$L_p$, $1\le p< \infty$, norm we understand

$$
\|f\|_p:=\|f\|_{L_p(\Omega,\mu)} := \left(\int_\Omega |f|^p\,d\mu\right)^{1/p}.
$$

By the $L_\infty$-norm we understand the uniform norm of bounded functions
$$
\|f\|_\infty := \sup_{\omega\in\Omega} |f(\omega)|
$$
and with some abuse of notation we occasionally write $L_\infty(\Omega)$ for the space $\cB(\Omega)$ of bounded functions on $\Omega$.
In Sections \ref{B} and \ref{UD} it is convenient for us to
assume that $\Omega$ is a compact subset of $\R^d$ and to consider the space
$\cC(\Omega)$ of continuous functions instead.

We also consider the discrete space $L_p^m$ of  vectors $\bx=(x_1,\dots,
x_m)\in\mathbb{R}^m$ (or $\mathbb C^m$) with the norm
\begin{equation}
    \label{discrete_norm}
\|\bx\|_p := \begin{cases}
\left(\frac{1}{m}\sum_{j=1}^m|x_j|^p\right)^{1/p},&\quad 1\le p<\infty,\\
\max\limits_{1\le j\le m}|x_j|,&\quad p=\infty.
\end{cases}
\end{equation}
This norm is less frequently used than the $\ell_p^m$-norm (without weights $1/m$) but it is more convenient
in our context.

Given a sequence of points $\xi^1,\ldots,\xi^m\in\Omega$, we can
associate with any function $f$ defined on $\Omega$ a sampling vector
\begin{equation}\label{sampling}
S(f,\xi) := (f(\xi^1),\ldots,f(\xi^m)).
\end{equation}

The previous papers on discretization were focused on two-sided
inequalities, which establish that the discrete norm
of a sample vector is bounded from below and from above by the integral $L_p$-norm of a function (multiplied by some constants). These kind of inequalities
are also known under the name of Marcinkiewicz-Zygmund inequalities.
Let us describe them in more detail.

\paragraph{The Marcinkiewicz-type discretization problem.}
Let $(\Omega,\mu)$ be a probability space. We say that a linear subspace $X_N$ (index $N$
here, usually, stands for the dimension of $X_N$) of $L_p(\Omega,\mu)$, $1\le p
< \infty$, admits the Marcinkiewicz-type discretization theorem with
parameters $m\in \N$ and $p$ and positive constants $C_1\le C_2$ if there exists
a set $\{\xi^j\}_{j=1}^m\subset\Omega$ such that for any $f\in X_N$ we have
\be\label{I1}
C_1\|f\|_p^p \le \frac{1}{m} \sum_{j=1}^m |f(\xi^j)|^p \le C_2\|f\|_p^p. 
\ee
The following problem was considered: For which $m$ a given subspace $X_N$
does admit the Marcinkiewicz-type discretization inequality~\eqref{I1}?

\paragraph{The Bernstein-type discretization problem.}
In the case $p=\infty$ we define $L_\infty$ as the space of bounded functions on $\Omega$  and ask for
\begin{equation}\label{I2}
C_1\|f\|_\infty \le \max_{1\le j\le m} |f(\xi^j)| \le  \|f\|_\infty.
\end{equation}

Recently it was established that results on sampling discretization and on
universal sampling discretization of the square norm are useful in sampling
recovery (see \cite{VT183}) and in sparse sampling recovery (see
\cite{DTM1}--\cite{DTM2}) with error measured in the square norm. It was
discovered that in some cases it is sufficient to impose a one-sided sampling
discretization assumption for proving the corresponding sampling recovery
results.

Inequality (\ref{I1}) consists of two one-sided inequalities: The Left
Discretization Inequality (LDI) and the Right Discretization Inequality (RDI).
In this paper we concentrate separately on the LDI and on the RDI. Clearly,
the Bernstein-type discretization problem is an example of the one-sided
discretization inequality, namely, the LDI. The RDI in the inequality
(\ref{I2}) is trivial. Note, that there might be other names for the one-sided
inequalities in the literature. For instance,  {D.~}Lubinsky calls RDI -- {\it
forward inequality} and LDI -- {\it converse inequality}  {(see, e.g., \cite{Lub})}.  

In this paper we focus on more general one-sided
discretization inequalities
between the integral $L_p$-norm of a function $f$ and the discrete $L_q^m$-norm
(or weighted $\ell_q$-norm) of the vector $S(f,\xi)$. We stress that the parameters $p$ and
$q$ may be different.

We consider the following settings.

\paragraph{One-sided discretization problems.}

Given a probability space $(\Omega,\mu)$, a set $X$ of functions defined on
$\Omega$, and parameters $1\le p,q\le \infty$, $D>0$,
$m\in\N$, we are interested in the
following properties.

LDI. We say that a set $X$ admits a Left Discretization Inequality if
\be\label{I1L}
\|f\|_p \le D \left(\frac{1}{m} \sum_{j=1}^m |f(\xi^j)|^q\right)^{1/q},\quad
\forall f \in X
\ee
for some sequence of points $\xi^1,\ldots,\xi^m\in\Omega$. We denote this
property by $X \in \LD(m,p,q,D)$.

RDI. We say that a set $X$ admits a Right Discretization Inequality if
\be\label{I1R}
  \left(\frac{1}{m} \sum_{j=1}^m |f(\xi^j)|^q \right)^{1/q} \le D\|f\|_p, \quad
  \forall f \in X
\ee
for some sequence of points $\xi^1,\ldots,\xi^m\in\Omega$. We denote this
property by $X \in \RD(m,p,q,D)$.

(In the case $q=\infty$ the discrete norm in~\eqref{I1L} or~\eqref{I1R} is
understood as usual, see~\eqref{discrete_norm}.)

Note that in the definitions of LDI and RDI we allow some of the points
$\xi^1,\ldots,\xi^m$ to coincide. Allowing some liberty, we will write $\xi =
\{\xi^j\}_{j=1}^m$, bearing in mind that some elements of this set may coincide. 

The problem we consider here is to find the smallest $m\in\N$ such that LDI/RDI hold
true for a given $N$-dimensional subspace $X_N$ or for a collection of subspaces $X_N$.

One can formulate LDI/RDI in terms of the sampling operator $S(\cdot,\xi)$
acting from $L_p$ to $L_q^m$.
The RDI is just the norm bound $\|S f\|_q \le D\|f\|_p$ for all  $f\in X$ and the LDI is the bound $\|f\|_p \le D\|S f\|_q$  for all  $f\in X$.

We may write RDI($p,q$) or LDI($p,q$) if we want to emphasize the parameters.
For the sake of brevity in the case $p=q$ we often write just one parameter,
e.g. $\LD(m,p,D)$, RDI($p$).
 
The above inequalities (\ref{I1}), (\ref{I1L}), and (\ref{I1R}) are formulated
in the case of discretization with equal weights -- weight $1/m$ for each
sample point $\xi^j$ {, $j=1,\dots,m$}. In the case of general weights (weight $\la_j$ for the
sample point $\xi^j$, $j=1,\dots,m$) we obtain analogues of LDI and RDI for
weighted discretization, which we call WLDI:
$$
\|f\|_p \le D\left(\sum_{j=1}^m \lambda_j|f(\xi^j)|^q\right)^{1/q}
\quad\forall f\in X,
$$
and WRDI:
$$
\left(\sum_{j=1}^m \lambda_j|f(\xi^j)|^q\right)^{1/q}\le D\|f\|_p
\quad\forall f\in X.
$$

The $\LD$ and $\RD$ properties are almost monotone in the number
of sampling points in the following sense. If $X\in\LD(m,p,q,D)$ then
$X\in\LD(km,p,q,D)$ for any
$k\in\mathbb N$ (repeat each point $k$ times); moreover,
$X\in\LD(m',p,q,2^{1/q}D)$ for all $m'>m$.
Indeed, if $m'\le 2m$, then consider $m$ points ensuring that
$X\in\LD(m,p,q,D)$ and repeat $m'-m$ of them. Then the $\ell_q^m$-norms of
functions do not decrease and the normalizing factor $1/m^{1/q}$ decreases with
the coefficient $(m'/m)^{1/q}\le 2^{1/q}$; we obtain $X\in\LD(m',p,q,2^{1/q}D)$.
The general case reduces to this in view of the first observation.

The same holds for RDI. Hence if we
do not care about multiplicative absolute constants there is no difference
between, say, the property
``$X\in\LD(m,p,q,C_2)$ for some $m\le C_1N\log N$''
and ``$X\in\LD(C_1N\log N,p,q,C_2)$'' (in the last expression we even allow
non-integer first parameter $C_1N\log N$ and assume that the number of points is $[C_1N\log N]$).
 
\paragraph{The structure of the paper.}
We now give a brief description of our results. In Section~\ref{RI} we
mostly study the Right Discretization Inequality. We focus on the
lower bounds for $m$ such that $\RD(m,p, q, D)$ is possible. In Proposition~\ref{RIP1} we discuss the case $2<p<\infty$, $1\leq q<\infty$ and obtain a lower bound on $m$ for
the RDI to hold under certain assumptions on a subspace $X_N$. As a corollary
of this lower bound (see Corollary \ref{RIC1}) we establish that for a subspace
$\Tr(\Lambda_N)$ spanned by a lacunary trigonometric system of size $N$ to have
the RDI in the case $2<q<\infty$ it is necessary to use at least $N^{q/2}$ points
in the sense of order. We show (see Remark~\ref{RemLosi}) that for the
LDI to hold for the subspace $\Tr(\Lambda_N)$ it is sufficient to use $m$ of
order $N$ points. Also, we prove (see Proposition~\ref{RIP3}) that if the sampling operator is injective, then there is no universal bound on $m$ in terms of $N$, which guarantees RDI for all
$N$-dimensional subspaces. 

In Section~\ref{LI} we discuss LDI. We show that a weighted LDI with nonnegative
weights implies an LDI (see Proposition~\ref{AP4}). As a consequence we get Proposition~\ref{AP6} which shows that
for any $N$-dimensional subspace the LDI($2$) holds with $m$ of order $N$ points. We also formulate an open problem.
 
In Section~\ref{M} we discuss a connection between sampling discretization
problems and known settings on finding submatrices of small size with certain
good properties. In particular, we explain that the RDI(2) with $m=N$ points
is a corollary of the A.\,A.~Lunin's result on matrices. 

In Section~\ref{B} we discuss the problem of sampling recovery. Mostly, it is a
survey of known results with simple comments on their improvements.
Here the case $p\ne q$ is important. For instance,
it turns out that some of the sampling recovery results, which were proved
using LDI($p,p$), can be proved under a weaker assumption
LDI($p,\infty$):
\be\label{A17a}
\|f\|_p \le D \max_{1\le j \le m} |f(\xi^j)|,
\ee
where $p\in[1,\infty)$, see Theorem~\ref{BT2}.
Also, we show in Section~\ref{B} how known results on discretization of the
uniform norm can be combined with a result of J.~Kiefer and
J.~Wolfowitz~\cite{KW} to obtain results on LDI.

In Section~\ref{UD} we give a brief survey of the very recent results from 
\cite{DTM1}--\cite{DTM3}. In addition, in the same way as in Section~\ref{B} we demonstrate 
that some of those results can be improved by replacing the LDI  with parameter $p$ assumption by the weaker LDI($p$,$\infty$), $p\ge 1$, see \eqref{A17a}.

 In Section~\ref{C} we discuss sampling discretization of the uniform norm. By its nature, 
 sampling discretization of the uniform norm is the LDI problem because the corresponding 
 RDI is trivial in this case. In Section~\ref{C} we focus on the lower bounds on $m$ for the sampling discretization result to hold. Theorem~\ref{CT1} is a new result, which extends the
 previously known result for the trigonometric system to the case of more general systems. 
 Then, we discuss an application of Theorem~\ref{CT1} to the case, when the system of interest is a Sidon system. 

We often use the concept of Nikol'skii inequality. For the reader's convenience we formulate it here. Other definitions and notations are introduced below in the text.

{\bf Nikol'skii-type inequalities.} Let $1\le p\le q\le\infty$ and $X_N\subset L_q(\Omega, \mu)$. The inequality
\begin{equation}\label{I4}
\|f\|_q \leq M\|f\|_p,\   \ \forall f\in X_N
\end{equation}
is called the Nikol'skii inequality for the pair $(p,q)$ with the constant $M$. 
We will also use the brief form of this fact: $X_N \in \NI(p,q,M)$. Typically, $M$ depends on $N$, for instance, $M$ can be of order $N^{\frac{1}{p}-\frac{1}{q}}$.

\section{Right Discretization Inequalities (RDI)}
\label{RI}

In this section we mostly discuss necessary conditions on $m$ for RDI and WRDI to hold. 
We begin with the weighted discretization.

\begin{Lemma}\label{RIL1}
    Let $2<p<\infty$, $1\leq q<\infty$ and let $X_N\in\NI(2,p,M)$ with some
    constant $M$.  Assume that
    $\{\xi^j\}_{j=1}^m\subset \Omega$ and nonnegative weights $\{\la_j\}_{j=1}^m$
    are such that for any $f\in X_N$ we have
\be\label{RI1}
    \left( \sum_{j=1}^m\la_j |f(\xi^j)|^q \right)^{1/q} \le D\|f\|_p.
\ee
    Then for any orthonormal basis $\{u_i\}_{i=1}^N$ of $X_N$ and any $j\in
    \{1,\ldots,m\}$ we have
$$
    \la_j\left( \sum_{i=1}^N |u_i(\xi^j)|^2\right)^{q/2} \le (DM)^q.
$$
\end{Lemma}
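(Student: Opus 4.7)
The plan is to test the hypothesis (\ref{RI1}) on the reproducing kernel of $X_N$ at the point $\xi^j$. Fix $j\in\{1,\ldots,m\}$ and consider
$$
f(x) := \sum_{i=1}^N \overline{u_i(\xi^j)}\,u_i(x) \in X_N.
$$
By orthonormality of $\{u_i\}_{i=1}^N$ this function has the two self-reproducing properties
$$
f(\xi^j) = \sum_{i=1}^N |u_i(\xi^j)|^2 =: S_j, \qquad \|f\|_2^2 = \sum_{i=1}^N |u_i(\xi^j)|^2 = S_j,
$$
so that $|f(\xi^j)| = S_j$ and $\|f\|_2 = S_j^{1/2}$.

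Next I would use the nonnegativity of the weights to isolate the single term at $\xi^j$ on the left-hand side of (\ref{RI1}):
$$
\lambda_j^{1/q}|f(\xi^j)| \le \Bigl(\sum_{k=1}^m \lambda_k |f(\xi^k)|^q\Bigr)^{1/q} \le D\|f\|_p.
$$
Now apply the Nikol'skii hypothesis $X_N\in\NI(2,p,M)$ to get $\|f\|_p \le M\|f\|_2$, and plug in the values of $|f(\xi^j)|$ and $\|f\|_2$ computed above:
$$
\lambda_j^{1/q}\, S_j \le DM\, S_j^{1/2}.
$$
If $S_j=0$ the desired bound is trivial; otherwise divide both sides by $S_j^{1/2}$ and raise to the $q$-th power to obtain $\lambda_j S_j^{q/2} \le (DM)^q$, which is exactly the claim.

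The argument is quite short and relies on a single idea — using the diagonal of the reproducing kernel to saturate simultaneously the sampling value $f(\xi^j)$ and the $L_2$-norm of $f$. There is no real obstacle; the only points to verify are that the reproducing kernel lies in $X_N$ (immediate from its definition as a linear combination of the basis) and that the chain of inequalities $L_2 \to L_p \to \ell_q^m(\lambda)$ is applicable, which is ensured by the Nikol'skii assumption and by the hypothesis (\ref{RI1}).
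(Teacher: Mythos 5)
Your proof is correct and is essentially the same as the paper's: the paper works with the identity $\bigl(\sum_{i=1}^N |u_i(\xi^j)|^2\bigr)^{1/2}=\sup_{f\in X_N,\,\|f\|_2\le 1}|f(\xi^j)|$ and then applies the trivial one-term bound, the hypothesis (\ref{RI1}), and the Nikol'skii inequality, while you simply plug in the extremizer of that supremum (the reproducing kernel at $\xi^j$) explicitly. The two arguments are the same chain $\ell_q(\lambda)\to L_p\to L_2$ applied to the same extremal object, with your version handling the degenerate case $S_j=0$ explicitly, which the supremum formulation absorbs automatically.
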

\begin{proof} It is well known and easy to check that for any $\omega\in\Omega$ we have
\be\label{RI3}
\left(\sum_{i=1}^N |u_i(\omega)|^2\right)^{1/2} = \sup_{f\in X_N:\,\|f\|_2\le 1}
    |f(\omega)|.
\ee
Using the trivial bound $\la_j|f(\xi^j)|^q \le \sum_{k=1}^m\la_k |f(\xi^k)|^q$, we obtain from (\ref{RI3})
$$
\la_j\left(\sum_{i=1}^N |u_i(\xi^j)|^2\right)^{q/2} \le \sup_{f\in X_N:\,\|f\|_2\le 1}  \sum_{k=1}^m \la_k|f(\xi^k)|^q.
$$
By (\ref{RI1}) and by our assumption on the Nikol'skii inequality we continue
$$
    \le \sup_{f\in X_N:\,\|f\|_2\le 1} (D\|f\|_p)^q \le (DM)^q.
$$
This completes the proof of Lemma \ref{RIL1}.
\end{proof}

\begin{Remark}\label{R_negative_weights}
In Lemma~\ref{RIL1} we assume that all weights are positive. But discretization
with negative weights also makes sense. For instance, it was noted in the paper
\cite{Lim21} that even for $p=2$ there exist subspaces such that for the
minimal number $m$ of points providing for any $f\in X_N$ the equality
$$
\|f\|_p^p = \sum_{j=1}^m \lambda_j |f(\xi^{j})|^p
$$
(exact discretization) we need some $\lambda_j$, $j=1,\dots, m$, to be negative.
\end{Remark}

The following Proposition~\ref{RIP1} is a direct corollary of Lemma~\ref{RIL1}. 

\begin{Proposition}\label{RIP1}
    Let $2<p<\infty$, $1\le q<\infty$ and let $X_N\in\NI(2,p,M)$ with some
    constant $M$. Suppose that $X_N$ has an
    orthonormal basis $\{u_i\}_{i=1}^N$ with the property
    $\sum_{i=1}^N|u_i(\omega)|^2\ge cN$, $c>0$, for all $\omega\in \Omega$. Assume that $X_N~\in~\RD(m,p,q,D)$. Then
$$
    (cN)^{q/2} \le m (DM)^q.
$$
\end{Proposition}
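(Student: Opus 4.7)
The plan is to deduce this proposition directly from Lemma~\ref{RIL1} by specializing to the equal-weight case.

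First, I observe that the RDI hypothesis $X_N\in\RD(m,p,q,D)$ is precisely inequality~\eqref{RI1} of Lemma~\ref{RIL1} with the equal weights $\lambda_j = 1/m$ for all $j=1,\dots,m$. Since the Nikol'skii hypothesis $X_N\in\NI(2,p,M)$ is also assumed, all hypotheses of Lemma~\ref{RIL1} are met, and I may apply its conclusion to the orthonormal basis $\{u_i\}_{i=1}^N$ provided by the proposition.

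Next, I apply the lemma's bound at any single index $j$, obtaining
\[
\frac{1}{m}\left(\sum_{i=1}^N |u_i(\xi^j)|^2\right)^{q/2} \le (DM)^q.
\]
By the structural assumption on the basis, $\sum_{i=1}^N |u_i(\xi^j)|^2 \ge cN$ holds at every point $\xi^j\in\Omega$, so
\[
\frac{(cN)^{q/2}}{m} \le (DM)^q,
\]
which rearranges to the desired inequality $(cN)^{q/2}\le m(DM)^q$.

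There is no real obstacle here: the entire content has been packaged into Lemma~\ref{RIL1}, and the proposition is merely the equal-weight specialization combined with the pointwise lower bound on the diagonal of the reproducing kernel $\sum_i |u_i(\omega)|^2$. The only thing to double-check is that the extremal identity~\eqref{RI3} used inside Lemma~\ref{RIL1} is compatible with the pointwise (not almost-everywhere) interpretation of functions adopted in Section~\ref{I}, which it is, since one works with a fixed basis evaluated at a fixed point.
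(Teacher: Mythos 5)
Your proof is correct and is exactly the argument the paper intends: the paper states that Proposition~\ref{RIP1} is a direct corollary of Lemma~\ref{RIL1}, and your specialization to equal weights $\lambda_j=1/m$ followed by the pointwise bound $\sum_{i=1}^N|u_i(\xi^j)|^2\ge cN$ is that corollary spelled out.
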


\begin{Corollary}\label{RIC1}
    Let $\Lambda_N = \{k_i\}_{i=1}^N$ be a lacunary sequence: $k_{i+1}~\ge~bk_i$, $b>1$, $i=1,\dots,N-1$. Denote
$$
\Tr(\Lambda_N):= \left\{f\,:\, f(x) = \sum_{k\in \Lambda_N} c_ke^{ikx},\quad x\in\T \right\}.  
$$
    Assume that for $2<p<\infty$, $2< q<\infty$ we have
    $\Tr(\Lambda_N)\in\RD(m,p,q,D)$.
    Then, as $\Tr(\Lambda_N)\in\NI(2,p,M)$, where $M=M(b,p)$ (see, for
    instance,~\cite{Zyg}, Theorem 8.20), we have
$$
    N^{q/2} \le m C(p,q,b) D^q.
$$
\end{Corollary}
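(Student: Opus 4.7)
The plan is to apply Proposition \ref{RIP1} directly, after supplying the two hypotheses it needs: an orthonormal basis with the pointwise lower bound $\sum_{i=1}^N|u_i(\omega)|^2\ge cN$, and membership in $\NI(2,p,M)$.

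First I would choose the natural basis $u_i(x)=e^{ik_ix}$, $i=1,\dots,N$, for $\Tr(\Lambda_N)$. With respect to the normalized Lebesgue measure on $\T$ these form an orthonormal system, and pointwise
$$
\sum_{i=1}^N |u_i(x)|^2 = \sum_{i=1}^N 1 = N \qquad \forall x\in\T,
$$
so the hypothesis of Proposition \ref{RIP1} holds with $c=1$.

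Next I would invoke the classical Nikol'skii-type inequality for lacunary trigonometric series: for a Hadamard lacunary set $\Lambda_N$ with ratio $b>1$ and any $2\le p<\infty$, Zygmund's theorem 8.20 (as cited in the statement) gives $\|f\|_p\le M\|f\|_2$ for all $f\in\Tr(\Lambda_N)$, with $M=M(b,p)$ depending only on $b$ and $p$. Hence $\Tr(\Lambda_N)\in\NI(2,p,M)$.

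With both hypotheses verified, Proposition \ref{RIP1} applied to the assumption $\Tr(\Lambda_N)\in\RD(m,p,q,D)$ yields immediately
$$
N^{q/2} = (cN)^{q/2} \le m(DM)^q = m\,M(b,p)^q\,D^q,
$$
which is the desired bound with $C(p,q,b):=M(b,p)^q$. There is no real obstacle here; the only points to watch are the choice of the normalized Lebesgue measure (so that the exponentials are genuinely orthonormal and the lower bound $c=1$ is attained), and the condition $q>2$ (inherited from Proposition \ref{RIP1}), which together with $p>2$ matches the range stated in the corollary.
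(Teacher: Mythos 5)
Your proposal is correct and follows essentially the same route the paper intends: the corollary is obtained by applying Proposition~\ref{RIP1} with the orthonormal basis $u_i(x)=e^{ik_ix}$ (with respect to normalized Lebesgue measure on $\T$, so $\sum_{i=1}^N|u_i(x)|^2=N$ and $c=1$) together with the lacunary Nikol'skii inequality $\|f\|_p\le M(b,p)\|f\|_2$ from Zygmund, giving $C(p,q,b)=M(b,p)^q$. Nothing is missing.
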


\begin{Remark}\label{RemLosi}
    In contrast to the RDI case, the LDI with $2<p<\infty$, $2<q<\infty$ for lacunary polynomials may
    be satisfied using $m=O(N)$ points. Indeed, by Theorem~1.1 from~\cite{VT158} there are $m \le CN$ points
    $\xi^1,\ldots,\xi^m$ that provide $L_2$-discretization
    and this immediately implies LDI:
    $$
    \|f\|_p \le C_1\|f\|_2
    \le C_2 \left(\frac{1}{m}\sum_{j=1}^m |f(\xi^j)|^2\right)^{1/2} 
    \le C_2 \left(\frac{1}{m}\sum_{j=1}^m |f(\xi^j)|^q\right)^{1/q}.
    $$
    
\end{Remark}

\begin{Proposition}\label{RIP2}
    Let $2<p<\infty$ and let $X_N\in\NI(2,p,M)$ with some
    constant $M$.
    Assume that $\{\xi^j\}_{j=1}^m\subset \Omega$ and nonnegative weights $\{\la_j\}_{j=1}^m$ are such that for any $f\in X_N$ we have
\be\label{RI8}
    D_1^{-1}\|f\|_2 \le \left(\sum_{j=1}^m \la_j |f(\xi^j)|^p \right)^{1/p} \le D_2\|f\|_p.
\ee
Then  
$$
    N^{p/2} \le m (K_p D_1 D_2 M)^p,
$$
where $K_p$ is a constant from the Khinchin inequality.
\end{Proposition}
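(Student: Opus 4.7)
The plan is to combine Lemma~\ref{RIL1} (or rather its method) with a randomization argument using independent $\pm 1$ signs and the Khinchin inequality, exactly as one does to prove moment lower bounds in related discretization problems.

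First I would fix an orthonormal basis $\{u_i\}_{i=1}^N$ of $X_N$ and, for a Rademacher vector $\varepsilon=(\varepsilon_1,\dots,\varepsilon_N)$, consider the random function $f_\varepsilon := \sum_{i=1}^N \varepsilon_i u_i$. Since the $u_i$ are orthonormal, $\|f_\varepsilon\|_2 = N^{1/2}$ deterministically. Applying the left-hand (LDI-type) inequality in \eqref{RI8} to $f_\varepsilon$ and raising to the $p$-th power gives
$$
  \frac{N^{p/2}}{D_1^{p}}\;\le\;\sum_{j=1}^m \lambda_j |f_\varepsilon(\xi^j)|^p.
$$
Taking expectation over $\varepsilon$ and applying Khinchin's inequality pointwise in $\xi^j$ to $\sum_i \varepsilon_i u_i(\xi^j)$ yields
$$
  \bE |f_\varepsilon(\xi^j)|^p \;\le\; K_p^{p}\Bigl(\sum_{i=1}^N |u_i(\xi^j)|^2\Bigr)^{p/2},
$$
so that
$$
  \frac{N^{p/2}}{D_1^{p}}\;\le\; K_p^{p}\sum_{j=1}^m \lambda_j\Bigl(\sum_{i=1}^N |u_i(\xi^j)|^2\Bigr)^{p/2}.
$$

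Next I would bound each term $\lambda_j(\sum_i |u_i(\xi^j)|^2)^{p/2}$ by exactly the argument used in Lemma~\ref{RIL1}, now with $q=p$: from~\eqref{RI3}, the trivial inequality $\lambda_j|f(\xi^j)|^p\le \sum_k \lambda_k|f(\xi^k)|^p$, the right-hand inequality in \eqref{RI8}, and the Nikol'skii assumption $X_N\in\NI(2,p,M)$,
$$
  \lambda_j\Bigl(\sum_{i=1}^N |u_i(\xi^j)|^2\Bigr)^{p/2}
  \;\le\; \sup_{f\in X_N,\,\|f\|_2\le 1}\sum_{k=1}^m \lambda_k|f(\xi^k)|^p
  \;\le\; (D_2 M)^{p}.
$$
Summing this bound over the $m$ sample points gives $\sum_{j=1}^m \lambda_j(\sum_i |u_i(\xi^j)|^2)^{p/2}\le m(D_2 M)^p$, and substituting into the previous display yields
$$
  N^{p/2}\;\le\; m\,(K_p D_1 D_2 M)^{p},
$$
as required.

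There is really no main obstacle: the only mild subtlety is recognizing that the two halves of \eqref{RI8} play different roles, the left one feeding the randomized lower bound and the right one feeding the Lemma~\ref{RIL1}-style pointwise upper bound on $\lambda_j(\sum_i|u_i(\xi^j)|^2)^{p/2}$. The Khinchin constant $K_p$ is exactly what converts the expected $p$-th moment of $f_\varepsilon(\xi^j)$ into the $\ell_2$-sum of coordinates, and the assumption $p>2$ is needed only to have the Nikol'skii inequality $\NI(2,p,M)$ make nontrivial sense.
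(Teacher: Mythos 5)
Your proposal is correct and follows essentially the same route as the paper: the paper also considers the randomized functions $f(\omega,\theta)=\sum_{i=1}^N r_i(\theta)u_i(\omega)$ with Rademacher signs, applies the left inequality of~\eqref{RI8} pointwise in $\theta$, integrates and uses Khinchin, and then bounds $\sum_{j}\la_j\bigl(\sum_i|u_i(\xi^j)|^2\bigr)^{p/2}\le m(D_2M)^p$ via Lemma~\ref{RIL1} with $q=p$ (you re-derive that lemma's one-line argument instead of citing it, which is the only cosmetic difference).
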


\begin{proof}
    Let $\{u_i\}_{i=1}^N$ be an orthonormal basis of $X_N$. Consider the functions
$$
f(\omega,\theta) := \sum_{i=1}^N r_i(\theta)u_i(\omega),
$$
where $\{r_i(\theta)\}_{i=1}^N$ are the Rademacher functions defined on $[0,1]$. Then by our assumption 
(\ref{RI8}) we obtain for each $\theta\in [0,1]$
\be\label{RI10}
    D_1^{-p} N^{p/2} = D_1^{-p} \|f(\cdot,\theta)\|_2^p \le \sum_{j=1}^m \la_j |f(\xi^j,\theta)|^p.
\ee
Integrating~(\ref{RI10}) with respect to $\theta$ and using the Khinchin inequality we get
$$
    D_1^{-p} N^{p/2} \le \int_0^1\sum_{j=1}^m \la_j |f(\xi^j,\theta)|^p\,d\theta \le
    K_p^p\sum_{j=1}^m\la_j\left(\sum_{i=1}^N |u_i(\xi^j)|^2\right)^{p/2}.
$$
Applying Lemma \ref{RIL1} with $q=p$ we find
$$
    D_1^{-p}N^{p/2} \le K_p^p m D_2^p M^p.
$$
\end{proof}

\begin{Remark}\label{RIR1} In the case $2<p<\infty$ the condition
$$
    \|f\|_2\le D_1 \left(\frac{1}{m}\sum_{j=1}^m |f(\xi^j)|^p\right)^{1/p}
$$
is weaker than the two related conditions
$$
    \|f\|_p \le D_1\left(\frac{1}{m}\sum_{j=1}^m |f(\xi^j)|^p\right)^{1/p} \quad\text{and}\quad 
    \|f\|_2 \le D_1\left(\frac{1}{m}\sum_{j=1}^m |f(\xi^j)|^2\right)^{1/2}.
$$
\end{Remark}

\paragraph{RDI with injective sampling operator.}
Now we make a simple observation that in the case of an injective sampling operator
$f\mapsto S(f,\xi)$ there is no \textit{a priori} upper bound for the number of points
$m$ that provide RDI.

Given $a\in(0,1/2]$, we define the function $f_a\in C[0,1]$ as
follows:
$f_a(x) := 1$, $0\le x\le a$; $f_a(x):=2-x/a$, $a\le x\le 2a$;
$f_a(x):=0$, $2a \le x\le 1$.

\begin{Proposition}\label{RIP3} 
    {Let $p,q\in[1,\infty)$ and let $X_N$ be a linear subspace
    of $C[0,1]$ that contains functions $f_a$ and $f_{a/2}$ for some
    $a\in(0,1/2]$.}
Suppose that a set $\{\xi^j\}_{j=1}^m\subset\Omega$ provides {the property
    $X_N\in\RD(m,p,q,D)$ with injective sampling operator, i.e.}
\begin{equation*}
    0 < \left(\frac{1}{m}\sum_{j=1}^m|f(\xi^j)|^q\right)^{1/q} \le
    D\|f\|_p,\quad \forall f\in X_N{\setminus\{0\}}.
\end{equation*}
    Then
    \be
    \label{f_a_bound}
    {
    m\ge D^{-q}(2a)^{-q/p}.
}
    \ee
\end{Proposition}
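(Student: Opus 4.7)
The plan is to exploit the injectivity of the sampling operator on $X_N$ together with a careful choice of the two available test functions $f_a$ and $f_{a/2}$. The essential point is that $f_{a/2}$ has support contained in $[0,a]$, so if no sample point lies in $[0,a]$ then $S(f_{a/2},\xi)=0$, which would contradict injectivity. Meanwhile, $f_a$ is identically $1$ on $[0,a]$, so any sample point falling in $[0,a]$ produces a guaranteed contribution of $1$ to the discrete norm of $S(f_a,\xi)$. The RDI applied to $f_a$ will then yield the desired lower bound on $m$.

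More concretely, first I would record the two elementary facts about the test functions: (i) $\supp f_{a/2}\subset[0,a]$ (since for $x\ge a$ we have $2-x/(a/2)\le 0$ and $f_{a/2}$ is defined to be $0$ there), and (ii) $f_a(x)=1$ for every $x\in[0,a]$. Since $|f_a|\le 1$ and $\supp f_a\subset[0,2a]$, I also immediately get the norm estimate
$$
\|f_a\|_p \le (2a)^{1/p}.
$$
Next, by the assumed injectivity of the sampling operator on $X_N$, the nonzero function $f_{a/2}\in X_N$ must have $S(f_{a/2},\xi)\ne 0$, so there exists some index $j_0$ with $\xi^{j_0}\in[0,a)$. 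At this sample point, $f_a(\xi^{j_0})=1$.

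The final step is to apply the RDI inequality to $f_a$ itself. The presence of the sample point $\xi^{j_0}$ with $f_a(\xi^{j_0})=1$ gives the lower bound $(\tfrac{1}{m}\sum_{j=1}^m|f_a(\xi^j)|^q)^{1/q}\ge m^{-1/q}$, and combining with $\|Sf_a\|_q\le D\|f_a\|_p\le D(2a)^{1/p}$ yields $m^{-1/q}\le D(2a)^{1/p}$, which rearranges to the claimed bound $m\ge D^{-q}(2a)^{-q/p}$.

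There is essentially no obstacle; the only thing to be slightly careful about is the boundary behavior at $x=a$, to confirm that $f_{a/2}$ really vanishes on the whole closed set $\{x\ge a\}$, so that the injectivity argument produces a sample point strictly inside the region where $f_a\equiv 1$. Everything else is a direct computation.
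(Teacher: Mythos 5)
Your proof is correct and follows essentially the same route as the paper: injectivity applied to $f_{a/2}$ forces a sample point where $f_a=1$, and then the RDI for $f_a$ together with $\|f_a\|_p\le(2a)^{1/p}$ gives $m^{-1/q}\le D(2a)^{1/p}$. Your extra remark about the boundary point $x=a$ is a harmless refinement of the paper's statement that a node lies in $[0,a]$.
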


\begin{proof}
    {
    We apply the injectivity of the sampling operator to the function $f_{a/2}$
    and obtain that at least one among the $\xi^j$, $j=1,\dots, m$, lies in the segment $[0,a]$. Therefore, the RDI($p,q$) for $f_a$ implies
    $$
    \frac{1}{m^{1/q}} \le \left(\frac1m\sum_{j=1}^m|f_a(\xi^j)|^q\right)^{1/q}
    \le D\|f_a\|_p \le D(2a)^{1/p}.
    $$
    The required inequality follows.}
\end{proof}

{We can take $a$ in~\eqref{f_a_bound} to be arbitrarily
small and obtain arbitrarily large lower bounds on $m$.}
Hence there is also no upper bound for the number of points $m$ that provide
Marcinkiewicz-type discretization~\eqref{I1}.
 Note that Proposition~\ref{RIP3} gives the solution
to Open Problem 1 from \cite{KKLT}. In terms of \cite{KKLT} it means that for
each $1\le p<\infty$ and any positive constants $C_1$ and $C_2$ we have
$sd(N,p,C_1,C_2) =\infty$.

Note that any subspace $X_N$ admits RDI(2,2)
with $m=N$ points (see Proposition 4.1). This means that the
injectivity condition for RDI is essential.
The LDI(2,2) also holds for any $X_N$ with $m=O(N)$
points (see Proposition 3.2). However, we cannot guarantee
both LDI and RDI with a common set of points
of some size bounded by $m(N)$. Otherwise, we would
obtain RDI with an injective sampling operator and
this would contradict Proposition 2.3.

The following corollary of Proposition~$3.2$ from the paper
\cite{FG} gives the lower bounds on the number of points for the RDI
for subspaces with the Nikol'skii inequality. We refer the reader to \cite{FG} for the
construction and for the result in the case $X_N\in NI(1,\infty, (1+\varepsilon)N)$.
\begin{oldTheorem}[\cite{FG}]
  For any $1\le q<2$ and $N\in\mathbb{N}$ there exists an $N$-dimensional subspace $X_N\subset L_1([0,1])$ with the following properties: 
  \\(a) $X_N\in NI(q,\infty, K_q^{-1}N^{1/q})$;
  \\(b) If $m\in\mathbb{N}$ and $t_j, j=1,\dots, m$, are such that $\sum\limits_{j=1}^m|f(t_j)|^q>0$ for all $f\in X_N\backslash \{0\}$, then there exists $f\in X_N$ with
  \be\label{LI_F_q}
   \frac{1}{m}\sum\limits_{j=1}^m|f(t_j)|^q\ge \frac{N^{2-q/2}}{m}\|f\|_{L_q}^q.
  \ee
  In particular, it follows from \eqref{LI_F_q} that the RDI($q$) with injective
sampling operator is not true if $m=o(N^{2-q/2})$.
\end{oldTheorem}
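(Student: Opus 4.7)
The plan is to construct $X_N$ as the span of Rademacher functions restricted to a suitably chosen subinterval of $[0,1]$ and to verify the two properties separately. The scale of the interval is calibrated so that Khinchin's inequality provides the bridge between $L_q$-norms of random-sign sums and $\ell_2$-norms of coefficients that is needed to realise the Nikol'skii exponent $1/q$ (which exceeds the exponent $1/2$ that Rademacher functions on the full interval $[0,1]$ would give).

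Concretely, I would fix $L := N^{q/2-1}$, let $r_1,\dots,r_N$ be the standard Rademacher functions on $[0,1]$, and put $\phi_k(x) := r_k(x/L)$ for $x \in [0,L]$ and $\phi_k(x) := 0$ otherwise. Set $X_N := \mathrm{span}\{\phi_1,\dots,\phi_N\}$. For property~(a), given $f = \sum c_k \phi_k$ I would estimate $\|f\|_\infty \le \sum |c_k| \le N^{1/2}\|\mathbf{c}\|_2$ and, by the change of variables $y = x/L$ followed by Khinchin's inequality, $\|f\|_q^q = L\int_0^1|\sum c_k r_k(y)|^q\,dy \ge K_q^q L\|\mathbf{c}\|_2^q$. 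Taking the ratio yields the Nikol'skii constant $K_q^{-1}N^{1/2}L^{-1/q} = K_q^{-1}N^{1/q}$, as required.

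For property~(b), given sample points $\{t_j\}_{j=1}^m$ for which the sampling operator is injective on $X_N$, I first observe that all functions in $X_N$ are supported in $[0,L]$, so injectivity forces at least $m'' \ge N$ of the $t_j$'s to lie in $[0,L]$. I then apply a first-moment argument: draw $\varepsilon = (\varepsilon_k) \in \{-1,+1\}^N$ uniformly and set $f_\varepsilon := \sum_k \varepsilon_k \phi_k$. For each $t_j \in [0,L]$ the value $f_\varepsilon(t_j) = \sum_k \varepsilon_k \phi_k(t_j)$ is a Rademacher sum in $\varepsilon$ with $\pm 1$ coefficients, so Khinchin gives $\mathbb{E}_\varepsilon|f_\varepsilon(t_j)|^q \ge K_q^q N^{q/2}$, while $\|f_\varepsilon\|_q^q \le C_q L N^{q/2}$ holds deterministically. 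Choosing a good $\varepsilon$ then yields an $f \in X_N$ with $\sum_j|f(t_j)|^q \gtrsim m'' N^{q/2}$ and $\|f\|_q^q \lesssim L N^{q/2}$, hence $\sum_j|f(t_j)|^q/\|f\|_q^q \gtrsim m''/L \ge N/L = N^{2-q/2}$, which is the desired lower bound up to constants depending on $q$.

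The main delicate point is the calibration of the scale $L = N^{q/2-1}$: this exact exponent is what makes the Nikol'skii constant in~(a) and the lower bound in~(b) simultaneously sharp, and any other choice would spoil one of the two. A minor technical step is the deduction $m'' \ge N$ from the injectivity hypothesis, which follows because the sampling map from the $N$-dimensional subspace $X_N$ (whose functions all vanish outside $[0,L]$) can be injective only if at least $N$ sample points fall inside $[0,L]$.
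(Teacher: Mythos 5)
Note first that the survey does not actually prove this theorem: it is quoted from \cite{FG} (as a consequence of Proposition~3.2 there), with the reader referred to that paper for the construction, so there is no internal proof to compare with line by line. Your rescaled-Rademacher construction is a legitimate self-contained derivation of the stated phenomenon, and the calibration $L=N^{q/2-1}$ is exactly the right one: part (a) comes out with the constant $K_q^{-1}N^{1/q}$ as stated (via $\|f\|_\infty=\sum_k|c_k|\le N^{1/2}\|\mathbf c\|_2$ and the lower Khinchin bound on $[0,L]$), the counting step $m''\ge N$ forced by injectivity on a subspace supported in $[0,L]$ is sound, and the averaging over $\varepsilon\in\{\pm1\}^N$ is the right mechanism. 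Two points deserve attention. First, as written your part (b) only yields the bound with an extra factor (you get $K_q^q N^{2-q/2}$, or worse if one also pays a constant in the deterministic upper bound), whereas the statement has constant $1$; this is harmless for the ``in particular'' conclusion about $m=o(N^{2-q/2})$, and in fact the loss can be removed inside your own construction: since the coefficients $a_k=\phi_k(t_j)$ are all $\pm1$, the sum $\sum_k\varepsilon_k a_k$ has exactly the same distribution as $\sum_k\varepsilon_k$, and likewise $\int_0^1|\sum_k\varepsilon_k r_k(y)|^q\,dy=\mathbb{E}\,|\sum_k\varepsilon_k|^q$ for every fixed $\varepsilon$; hence with $A:=\mathbb{E}\,|\sum_k\varepsilon_k|^q$ one has $\mathbb{E}_\varepsilon\sum_j|f_\varepsilon(t_j)|^q\ge m''A$ while $\|f_\varepsilon\|_q^q=LA$ identically, and choosing a good $\varepsilon$ gives $\sum_j|f(t_j)|^q\ge (m''/L)\|f\|_q^q\ge N^{2-q/2}\|f\|_q^q$ with constant one and no Khinchin needed in (b) at all. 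Second, since the paper insists on everywhere-defined functions and the adversary may place sample points at dyadic rationals, you should fix a pointwise definition of the Rademacher functions taking values $\pm1$ at every point (e.g.\ via binary digits) rather than the sign-of-sine convention, under which some $\phi_k$ vanish at dyadic points and your per-point lower bound $\mathbb{E}_\varepsilon|f_\varepsilon(t_j)|^q\ge K_q^qN^{q/2}$ would degrade; with that convention fixed, the argument goes through as you describe.
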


{\bf A comment on WRDI.}
Let $2\leq r<p<\infty$, $X_N\in\NI(2,p,M)$, points $\xi^1,\dots,\xi^m\in\Omega$
and weights $\lambda_1,\dots,\lambda_m\in\R_+$, $\lambda_1+\dots+\lambda_m=1$, be such that  
\begin{equation*}
\left(\sum_{j=1}^{m} \lambda_j|f(\xi^j)|^p\right)^{1/p}\le D \|f\|_p, \quad  \forall f\in X_N.
    \end{equation*}
Then, writing $\lambda_j = \lambda_j^{1-r/p}\lambda_j^{r/p}$, by the H{\"o}lder inequality we obtain
$$
\sum_{j=1}^{m} \lambda_j|f(\xi^j)|^r\le \left(\sum_{j=1}^{m} \lambda_j|f(\xi^j)|^p\right)^{\frac{r}{p}}
$$
\begin{equation*}
    \leq D^r\|f\|_p^r\le D^r M^r\|f\|_2^r\le D^r M^r\|f\|_r^r.
\end{equation*}
Thus, $\NI(2,p, M)$ and WRDI($p$) with a constant $D$ imply WRDI($r$) with the constant $DM$.

\section{Left Discretization Inequalities (LDI)}\label{LI}

We will prove that a WLDI with nonnegative weights implies an LDI if the sum of weights is
bounded from above.
This extra property of weights is true, for example, when we have the two--sided weighted
discretization. We begin with a slightly more general observation than the one from \cite{VT183}.

\begin{Remark}\label{rem_marcink_sum_weights}
    Let $\mathcal F$ be a collection of finite--dimensional subspaces such that if
    $X_N\in\mathcal F$ then $X_N' := \{f=g+c\colon g\in X_N,\, c\in \mathbb
    R\}\in\mathcal F$.
    Suppose that for some $1\le p_1\le p_2<\infty$ any $N$-dimensional $X_N\in\mathcal F$ admits the discretization
    \be\label{wmdi}
    D_1^{-1}\|f\|_{p_1} \le \left(\sum_{j=1}^m \lambda_j
    |f(\xi^j)|^q\right)^{1/q} \leq  D_2 \|f\|_{p_2},\  \ \forall f\in X_N,
    \ee
    with $m\le m(N)$ points and some nonnegative weights $\{\lambda_j\}_{j=1}^m$. Then
    any $X_N\in \mathcal F$ satisfies~\eqref{wmdi} with
    $m'\le m(N+1)$ points 
    and nonnegative weights $\{\lambda'_j\}_{j=1}^{m'}$ such that $\sum \lambda_j' \le D_2^q$.
\end{Remark}

\begin{Proposition}\label{AP4}  Let $X$ be a set of functions in $L_p(\Omega,\mu)$,  points $\xi^1,\dots,\xi^m\in\Omega$
    and nonnegative weights $\lambda_1,\dots,\lambda_m$ be such that
    $\lambda_1+\dots+\lambda_m\le C$ and 
    \begin{equation*}
    \|f\|_p\le D\left(\sum_{j=1}^{m} \lambda_j|f(\xi^j)|^q\right)^{1/q}, \quad  \forall f\in X.
\end{equation*}
    Then there exists $m_0\le (C^2+1)m$ such that $X\in \LD(m_0,p,q,D(C~+~C^{-1})^{1/q})$.
\end{Proposition}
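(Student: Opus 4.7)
The plan is to convert the weighted LDI into an unweighted (equal-weight) LDI by replacing each sampling point $\xi^j$ with an appropriate integer number of copies $k_j$, chosen so that $k_j/m_0$ slightly exceeds $\lambda_j$. Concretely, for an $m_0$ to be pinned down below, I would set
\begin{equation*}
k_j := \left\lceil \frac{m_0\,\lambda_j}{C+C^{-1}} \right\rceil,\qquad j=1,\dots,m,
\end{equation*}
so that each $k_j$ is a nonnegative integer satisfying the key pointwise inequality
\begin{equation*}
\lambda_j \le \frac{(C+C^{-1})\,k_j}{m_0}.
\end{equation*}
This is exactly the bound needed to dominate the original weighted sum by an equal-weight sum with $m_0$ samples.

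Next I would check the budget on $m_0$. Summing over $j$ and using $\lambda_1+\dots+\lambda_m\le C$,
\begin{equation*}
\sum_{j=1}^m k_j \le \frac{C}{C+C^{-1}}\,m_0 + m = \frac{C^2}{C^2+1}\,m_0 + m,
\end{equation*}
which is $\le m_0$ precisely when $m_0\ge (C^2+1)m$. I would therefore take $m_0$ to be the smallest integer with this property (so $m_0\le (C^2+1)m$ up to the standard integer rounding) and assemble the new point multiset $\{\eta^i\}_{i=1}^{m_0}$ by listing each $\xi^j$ with multiplicity $k_j$ and then padding with arbitrary additional points from $\Omega$ to reach exactly $m_0$ samples. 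Since $|f(\eta^i)|^q\ge 0$, such padding only enlarges the equal-weight sum, preserving the direction of the inequality we need.

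Combining the two steps, for every $f\in X$,
\begin{equation*}
\sum_{j=1}^m \lambda_j |f(\xi^j)|^q \le \frac{C+C^{-1}}{m_0}\sum_{j=1}^m k_j|f(\xi^j)|^q \le \frac{C+C^{-1}}{m_0}\sum_{i=1}^{m_0}|f(\eta^i)|^q,
\end{equation*}
and inserting this into the hypothesis $\|f\|_p\le D(\sum_j\lambda_j|f(\xi^j)|^q)^{1/q}$ yields the claimed $X\in\LD(m_0,p,q,D(C+C^{-1})^{1/q})$.

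There is no serious obstacle here; the whole proof is a bookkeeping trick, and the only nontrivial design choice is the divisor $C+C^{-1}$. This is the smallest slack that works uniformly in $C$: any divisor strictly smaller would fail for some $C$ (either in the regime $C<1$ or $C\ge 1$), whereas $C+C^{-1}\ge 2$ handles both regimes simultaneously and produces the symmetric constant $(C+C^{-1})^{1/q}$ in the LDI together with the budget $(C^2+1)m$ on the number of points.
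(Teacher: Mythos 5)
Your construction is essentially the paper's own proof: replicate each $\xi^j$ with an integer multiplicity comparable to $\lambda_j$, use $\lambda_1+\dots+\lambda_m\le C$ to keep the total number of points of order $(C^2+1)m$, and absorb the loss into the factor $(C+C^{-1})^{1/q}$; your key inequality $\lambda_j\le (C+C^{-1})k_j/m_0$ and the final chain are correct. The one discrepancy is the bookkeeping of $m_0$: you take the smallest integer with $m_0\ge (C^2+1)m$, i.e.\ $m_0=\lceil (C^2+1)m\rceil$, which exceeds the claimed bound $m_0\le (C^2+1)m$ whenever $(C^2+1)m\notin\mathbb{Z}$, and ``up to standard integer rounding'' is not what the statement asserts. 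The paper avoids this by going in the opposite order: it assigns to $\xi^j$ the multiplicity $[\lambda_jCm]+1$ and lets $m_0$ be the sum of these, so that $m_0\le \sum_j\lambda_jCm+m\le (C^2+1)m$ holds automatically, while $[\lambda_jCm]+1\ge\lambda_jCm$ still yields $k_j/m_0\ge\lambda_j/(C+C^{-1})$, exactly your pointwise bound. Your variant can also be repaired directly, e.g.\ by taking $m_0=\lfloor (C^2+1)m\rfloor$ and noting that $\lceil x\rceil<x+1$ together with the integrality of $\sum_j k_j$ still closes the budget, but the paper's ordering is the cleaner fix; no padding step is then needed either.
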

\begin{proof}
    Denote $m_0:=\sum\limits_{i=1}^m ([\lambda_iCm]+1)$, then $m_0\le \sum\limits_{i=1}^m \lambda_iCm+m\le (C^2+1)m$. We will construct a new set of points. For any $j=1,\dots, m$ we take the point $\xi^j$ $[\lambda_jCm]+1$ times. Denote all these points as $\xi_0^j$, $j=1,\dots, m_0$. Then
    \begin{gather*}
    \left(\sum_{j=1}^{m_0} \frac{1}{m_0}|f(\xi_0^j)|^q\right)^{1/q}=\left(\sum_{j=1}^{m} \frac{([\lambda_jCm]+1)}{m_0}|f({\xi}^j)|^q\right)^{1/q}\ge \\
    \ge \left(\sum_{j=1}^{m} \frac{\lambda_jCm}{(C^2+1)m}|f({\xi}^j)|^q\right)^{1/q}\ge \|f\|_p \left(\frac{C}{C^2+1}\right)^{1/q}/D.
    \end{gather*}
\end{proof}

Let us apply this technique to some known theorems on discretization.

\begin{Proposition}\label{AP6}
Let $1\leq p<\infty$. There exist positive constants $C_1(p)$ and $C_2(p)$ such that 
    each $N$-dimensional subspace $X_N \subset  L_p(\Omega,\mu)$ satisfies the
    property $X_N\in\LD(m, p, C_2)$, i.e. there is a set of points $\{\xi^j\}_{j=1}^m$ such that
\begin{equation*}
\|f\|_p \le C_2  \left(\sum_{j=1}^{m} \frac{1}{m}|f({\xi}^j)|^p\right)^{1/p},\quad \forall f\in X_N,
\end{equation*}
        where
    \begin{itemize}
        \item $m\le C_1N$, if $p=2$;
        \item $m\le C_1N\log{N}$, if $p=1$;
        \item $m\le C_1N\log{N}(\log\log{N})^2$, if $1<p<2$;
        \item $m\le C_1N^{p/2}$ if $p$ is an even integer;
        \item $m\le C_1N^{p/2}\log N$ for $p>2$.
    \end{itemize}
\end{Proposition}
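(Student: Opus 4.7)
The plan is to reduce each of the five cases to a known two-sided weighted Marcinkiewicz-type discretization with $q=p$, use Remark~\ref{rem_marcink_sum_weights} to force the weights to have bounded total mass, and then invoke Proposition~\ref{AP4} to pass from a weighted LDI to an (unweighted) LDI. Since $(C^2+1)$ is a constant depending only on $p$, this transition changes $m$ only by a constant factor and preserves the claimed orders.

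First I would collect the inputs. For each regime the literature provides a weighted discretization
$$D_1^{-1}\|f\|_p \le \left(\sum_{j=1}^m \lambda_j|f(\xi^j)|^p\right)^{1/p} \le D_2\|f\|_p,\quad \forall f\in X_N,$$
with nonnegative $\lambda_j$, absolute constants $D_1,D_2$ depending only on $p$, and $m=m(N)$ of the stated order: the Batson--Spielman--Srivastava / Lunin-type result for $p=2$ (hence $m\le C_1 N$); the weighted discretization for $p=1$ obtained via Kashin-type techniques (hence $m\le C_1 N\log N$); the recent result of Dai--Prymak--Temlyakov--Tikhonov for $1<p<2$ (giving $m\le C_1 N\log N(\log\log N)^2$); the classical moment-method argument when $p$ is an even integer (giving $m\le C_1 N^{p/2}$); and the general $p>2$ bound $m\le C_1 N^{p/2}\log N$ from the same line of work.

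Next I would apply Remark~\ref{rem_marcink_sum_weights} to the augmented subspace $X_N'=X_N+\R$ (of dimension at most $N+1$, so $m(N+1)$ is of the same order as $m(N)$). Testing the right-hand inequality on the constant function $f\equiv 1$ yields $\sum_{j=1}^{m'}\lambda_j'\le D_2^p=:C$. Restricting the resulting inequality to $X_N\subset X_N'$ gives, for the modified set of $m'$ points and weights $\lambda_j'$, the one-sided bound
$$\|f\|_p \le D_1\left(\sum_{j=1}^{m'}\lambda_j'|f(\xi_j')|^p\right)^{1/p},\qquad \forall f\in X_N,$$
together with $\sum\lambda_j'\le C$.

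Finally I would apply Proposition~\ref{AP4} with this constant $C=D_2^p$ (and with $q=p$). It produces a new sampling set of size $m_0\le (C^2+1)m'$, still of the claimed order, such that
$$\|f\|_p \le D_1(C+C^{-1})^{1/p}\left(\frac{1}{m_0}\sum_{j=1}^{m_0}|f(\xi_0^j)|^p\right)^{1/p},\qquad \forall f\in X_N,$$
which is exactly the conclusion $X_N\in\LD(m_0,p,C_2)$ with $C_2=C_2(p):=D_1(D_2^p+D_2^{-p})^{1/p}$.

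The only step that requires real care is the first one: correctly matching each regime of $p$ with a two-sided weighted discretization result that already yields the announced bound on $m$. Once this bookkeeping is in place, the passage from weighted to unweighted LDI is automatic through Remark~\ref{rem_marcink_sum_weights} and Proposition~\ref{AP4}, and the dimension increment $N\mapsto N+1$ is harmless because all the displayed bounds on $m(N)$ are increasing and not sensitive to a unit shift in $N$.
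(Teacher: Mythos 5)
Your proposal is correct in substance and its engine is exactly the paper's: a two-sided weighted discretization, then Remark~\ref{rem_marcink_sum_weights} (testing the right-hand side on constants after enlarging $X_N$ to $X_N+\mathbb{R}$) to bound $\sum\lambda_j'\le D_2^p$, then Proposition~\ref{AP4} to convert the weighted LDI into an equal-weight one with $m_0\le(C^2+1)m'$ points and constant $D_1(D_2^p+D_2^{-p})^{1/p}$; this is precisely how the paper handles $p=2$, starting from Theorem~6.4 of \cite{DPSTT2}. The difference is in how the remaining cases are sourced: the paper does not rerun the weighted-to-unweighted machinery for $p\ne 2$, but simply cites known results that are already in the required equal-weight one-sided form (\cite[D.20]{KKLT} for $p>2$, \cite[Corollaries 1.1, 1.2]{DKT} for $1\le p<2$), whereas you insist on two-sided weighted inputs in every regime. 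Your route does work, since such weighted results exist at the claimed orders (change-of-density embeddings of Bourgain--Lindenstrauss--Milman/Talagrand type for $p=1$, $1<p<2$ and $p>2$, and the reduction of even $p$ to the $L_2$ case applied to the span of the functions $f^{p/2}$), but your attributions are the weak point and would need to be fixed: the $1<p<2$ bound you ascribe to ``Dai--Prymak--Temlyakov--Tikhonov'' is due to Dai--Kosov--Temlyakov \cite{DKT} (or, in weighted form, to Talagrand), ``Kashin-type techniques'' is not a citable source for the $p=1$ weighted result, and for $p=2$ the relevant input is the Marcus--Spielman--Srivastava-based Theorem~6.4 of \cite{DPSTT2} (Lunin's theorem gives only the right-hand, RDI, side). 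With precise references in that first step, your argument gives the stated $\LD(m,p,C_2(p))$ bounds in all five regimes, with the $N\mapsto N+1$ shift harmless as you note.
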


\begin{proof}
    In the case $p=2$ we use the following result
    \cite[Theorem 6.4]{DPSTT2}: there exist three absolute constants $C_1'$, $c_0$, $C_0$ and a set of
    $m\leq C_1'N$ points $\xi^1,\ldots, \xi^m\in\Omega$, and a set of
    nonnegative  weights $\lambda_j$, $j=1,\ldots, m$, such that
    $$
    c_0\|f\|_2^2\leq  \sum_{j=1}^m \lambda_j |f(\xi^j)|^2 \leq C_0
    \|f\|_2^2,
    \quad \forall f\in X_N.
    $$
    This WLDI, Remark~\ref{rem_marcink_sum_weights} and Proposition~\ref{AP4}
    imply the required LDI.
    
    Other cases are also covered by known discretization results,
    see~\cite[\textbf{D.20. An upper bound}]{KKLT} for $p>2$
    and~\cite[Corollaries~1.1, 1.2]{DKT} for $1~\le~p~<~2$.
\end{proof}

\begin{Remark}\label{not_continuous}Note that in the cited papers the discretization results
    are often formulated for subspaces of continuous
    functions, $X_N\subset C(\Omega)$.
    However, it is always possible to properly discretize any subspace
    $X_N\subset L_p$ using an appropriate amount of points
    (see, e.g., Proposition 4.A)
    and pass to a subspace of functions defined on a
    discrete set. These functions are continuous by definition.
    So, cited results work in the general setting.
    \end{Remark}

We discovered that the case $p=2$, i.e. the LDI(2,2) 
\be\label{ldi_2_2}
\|f\|_2 
\le C_2  \left(\sum_{j=1}^{m} \frac{1}{m}|f({\xi}^j)|^2\right)^{1/2},
\quad \forall f\in X_N,
\quad m\le C_1N,
\ee
was obtained independently by a different method in the
paper~\cite[Theorem~6.2]{BSU}, see also~\cite[Lemma~11]{KPUU2}.

{\bf Open problem \ref{LI}.1.}
Under what condition on $m$ does each $N$-dimensional subspace $X_N \subset
\cC(\Omega)$ have the property $X_N\in\LD(m,p,\infty,D)$, where $2<p<\infty$
and $D=D(p)$:
$$
\|f\|_p \le D \max_{1\le j \le m} |f(\xi^j)|,\quad\forall f\in X_N,
$$
with an appropriate set of points $\{\xi^j\}_{j=1}^m$?

{\bf Comment \ref{LI}.1.} Theorem~1.3 from \cite{KKT} on discretization of the
uniform norm imply that in Open problem~\ref{LI}.1 we can always take $m=9^N$,
which means an exponential growth in $N$. Theorem~1.2 from \cite{KKT} shows that for the subspace $\Tr(\Lambda_N)$ (see Corollary~\ref{RIC1}) we need $(N/e)e^{CN/D^2}$ points for the LDI($\infty$) to hold.  Proposition~\ref{AP6} for $p=2$ guarantees that we
can take $m$ of order $N$ in the case $p\leq 2$. From Proposition~\ref{AP6} it
follows that we can take $m\le C N^{p/2}\log{N}$ in the case $p>2$ and $m\le
CN^{p/2}$ for even $p$.

The following theorem from \cite{KPUU2} is the LDI($p$,$2$).
\begin{Theorem}[{\cite[ {Theorem~13}]{KPUU2}}]\label{LDI_p_2}
    Let $2\leq p\leq \infty$. There exist positive constants $C_1=4$ and $C_2=83$ such that 
    each $N$-dimensional subspace $X_N \subset \cC(\Omega)$ satisfies the
    property $X_N\in\LD(C_1N, p, 2, C_2 N^{\frac{1}{2}-\frac{1}{p}})$, that is, there is a set of $m\leq C_1N$ points $\{\xi^j\}_{j=1}^m$ such that
\begin{equation*}
\|f\|_p \le C_2 N^{\frac{1}{2}-\frac{1}{p}} \left(\sum_{j=1}^{m} \frac{1}{m}|f({\xi}^j)|^2\right)^{1/2},\quad \forall f\in X_N.
\end{equation*}
\end{Theorem}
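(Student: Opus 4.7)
The plan is to lift the already--established LDI$(2,2)$ of Proposition~\ref{AP6} to an LDI$(p,2)$ by combining it with a \emph{uniform Nikol'skii bound} $\|f\|_\infty\le\sqrt{N}\|f\|_{L_2(\nu)}$ in a well-chosen auxiliary measure $\nu$, and then interpolating between the exponents $p=2$ and $p=\infty$. Concretely, the first step imports a measure--theoretic input external to the paper: since $X_N\subset\cC(\Omega)$ with $\Omega$ compact, the Kiefer--Wolfowitz equivalence theorem (a D-optimal design / Fritz John argument) supplies a probability measure $\nu$ on $\Omega$ such that an $L_2(\nu)$-orthonormal basis $\{u_i\}_{i=1}^N$ of $X_N$ has Christoffel function bounded uniformly:
\[
\sum_{i=1}^N|u_i(\omega)|^2\le N,\qquad \forall\,\omega\in\Omega.
\]
By the extremal identity \eqref{RI3}, this is exactly the Nikol'skii inequality $\|f\|_\infty\le \sqrt{N}\,\|f\|_{L_2(\nu)}$ for all $f\in X_N$.

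The second step couples this Nikol'skii inequality with one application of Proposition~\ref{AP6}. Apply the LDI$(2,2)$ to $X_N$ viewed inside $L_2(\Omega,\mu')$ with the \emph{averaged} probability measure $\mu':=\tfrac12(\mu+\nu)$; this yields points $\xi^1,\dots,\xi^m$ with $m\le C_1N$ and a constant $C_0$ such that
\[
\|f\|_{L_2(\mu')}\le C_0\|Sf\|,\qquad \|Sf\|:=\Bigl(\tfrac1m\textstyle\sum_{j=1}^m|f(\xi^j)|^2\Bigr)^{1/2}.
\]
Because $2\|f\|_{L_2(\mu')}^2=\|f\|_{L_2(\mu)}^2+\|f\|_{L_2(\nu)}^2$, the \emph{same} set of points serves both measures:
\[
\|f\|_{L_2(\mu)}\le \sqrt{2}\,C_0\|Sf\|\quad\text{and}\quad \|f\|_{L_2(\nu)}\le \sqrt{2}\,C_0\|Sf\|.
\]
Combining the second bound with Step~1 gives the endpoint LDI$(\infty,2)$:
\[
\|f\|_\infty\le \sqrt{N}\,\|f\|_{L_2(\nu)}\le \sqrt{2}\,C_0\,N^{1/2}\|Sf\|.
\]
Interpolation closes the argument: for $p\in[2,\infty)$ the elementary inequality $\|f\|_{L_p(\mu)}^p\le \|f\|_\infty^{p-2}\|f\|_{L_2(\mu)}^2$ together with the two bounds above yields
\[
\|f\|_{L_p(\mu)}\le \|f\|_\infty^{1-2/p}\|f\|_{L_2(\mu)}^{2/p}\le \sqrt{2}\,C_0\,N^{1/2-1/p}\|Sf\|,
\]
which is the desired LDI$(p,2)$ with $C_2=\sqrt{2}\,C_0$.

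The genuine content sits in Step~1 (the existence of a D-optimal $\nu$); everything else is a linear-algebraic bookkeeping. The averaging device $\mu'=\tfrac12(\mu+\nu)$ is the small trick that avoids needing two separate $L_2$-discretizations with a common set of points. The main technical obstacle, if one wants the specific numerical constants $C_1=4$ and $C_2=83$ stated in the theorem, is to invoke a sharp quantitative form of Proposition~\ref{AP6} --- plausibly the Marcus--Spielman--Srivastava/Batson--Spielman--Srivastava style sparsification route --- and to propagate the constants carefully through the averaging and interpolation steps; the argument above recovers the correct functional form $N^{1/2-1/p}$ with $m=O(N)$ points but only implicit absolute constants.
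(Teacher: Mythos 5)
The paper does not actually prove this statement---it is imported verbatim from \cite{KPUU2} (their Theorem~13)---so there is no internal proof to compare against; judged on its own, your argument is sound, and it runs on exactly the circle of ideas that the paper attributes to that source in Section~\ref{B} for the companion result Theorem~\ref{BP1b}: the Kiefer--Wolfowitz inequality \eqref{KW}, an $O(N)$-point LDI$(2,2)$ as in Proposition~\ref{AP6}/\eqref{ldi_2_2}, and the elementary interpolation $\|f\|_p\le\|f\|_\infty^{1-2/p}\|f\|_2^{2/p}$. Your averaging device $\mu'=\tfrac12(\mu+\nu)$ is a clean way to force a single point set to discretize $L_2(\mu)$ and $L_2(\nu)$ simultaneously, which is genuinely needed here (the exponent $N^{1/2-1/p}$ requires both the $L_2(\mu)$ bound and the $L_\infty$ bound through $\nu$); note in passing that the same trick also guarantees nondegeneracy, since by \eqref{KW} no nonzero $f\in X_N$ vanishes $\nu$-a.e., so $\|\cdot\|_{L_2(\mu')}$ is an honest norm on $X_N$ and the LDI$(2,2)$ input applies at full dimension. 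The one respect in which your proof falls short of the statement as quoted is the explicit numerical constants: the theorem asserts $C_1=4$ and $C_2=83$, whereas your route delivers $m\le C_1N$ and $C_2=\sqrt2\,C_0$ with unspecified absolute constants inherited from the $L_2$-subsampling step (you acknowledge this). Since the substance of the result is the $N^{1/2-1/p}$ scaling with $O(N)$ points, this is a presentational rather than a mathematical gap; recovering the quoted values would require feeding a sharp quantitative LDI$(2,2)$ (cf.\ \eqref{ldi_2_2} and Remark~\ref{DR1}, where $m\le 2N$ suffices) through your averaging and interpolation steps and tracking the constants as is done in \cite{KPUU2}.
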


See also some results on LDI in Section~\ref{B}: {Theorem~\ref{TrDi},} Theorem~\ref{BP1},
{Theorem~\ref{BP1a}, and  Theorem~\ref{BP1b}}.

\section{Matrix setting}
\label{M}
The following Proposition~\ref{general_discretization} from  \cite{FG} guarantees that for any subspace $X_N$ we can find a sufficiently large number $m_0\in\mathbb{N}$
such that $X_N\in\LD(m_0,p,K_1)$ and $X_N\in\RD(m_0,p,K_2)$.

\begin{oldProposition}[{\cite[Proposition~6.1]{FG}}]\label{general_discretization}
Let $1\leq p<\infty$ and let $X_N\subset L_p(\Omega,\mu)$ be an $N$-dimensional
subspace, where $(\Omega, \mu)$ is a probability space. Then for each
$\varepsilon>0$ there exists $m_0\in\mathbb{N}$ such that if $M\geq m_0$ and
$\{\xi^j\}_{j=1}^M\subset\Omega$ are independent random samples then with
probability at least $(1-\varepsilon)$ we have that $$
(1-\varepsilon)\|f\|_p^p\leq\frac{1}{M}\sum\limits_{j=1}^M|f(\xi^j)|^p\leq (1+\varepsilon)\|f\|_p^p, \quad \forall f\in X_N.
$$
\end{oldProposition}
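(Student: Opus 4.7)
The plan is to prove this as a uniform law of large numbers on the compact $L_p$-unit sphere $S=\{f\in X_N:\|f\|_p=1\}$. By homogeneity, the statement reduces to showing that, with probability at least $1-\varepsilon$,
\begin{equation*}
\sup_{f\in S}\Bigl|\tfrac{1}{M}\sum_{j=1}^{M}|f(\xi^j)|^p-1\Bigr|\le\varepsilon.
\end{equation*}
As the unit sphere of a finite-dimensional space, $S$ is compact in $L_p$ and all norms on $X_N$ are equivalent (with constants depending on $X_N$); since $m_0$ is allowed to depend on $X_N$, these facts will be used freely.

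The first step is truncation, needed because functions in $X_N$ may be unbounded. Compactness of $S$ implies that $\{|f|^p:f\in S\}$ is uniformly integrable, so for a chosen $\eta>0$ one can pick a threshold $T=T(\eta,X_N)$ with $\sup_{f\in S}\int_{\{|f|^p>T\}}|f|^p\,d\mu<\eta$. Let $Z_j(f):=\min(|f(\xi^j)|^p,T)$; these are bounded i.i.d.\ random variables with $|\bE Z(f)-\|f\|_p^p|<\eta$ uniformly on $S$. Hoeffding's inequality then gives, for each fixed $f\in S$,
\begin{equation*}
\bP\Bigl(\Bigl|\tfrac{1}{M}\sum_{j=1}^{M}Z_j(f)-\bE Z(f)\Bigr|>\eta\Bigr)\le 2\exp(-cM\eta^2/T^2).
\end{equation*}
Taking a $\delta$-net $\cN\subset S$ of cardinality at most $(3/\delta)^N$ and union-bounding over $\cN$ yields uniform truncated concentration with probability at least $1-\varepsilon/3$, provided $M\ge m_0(\varepsilon,\eta,T,\delta,N)$.

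The main obstacle is extending the estimate from $\cN$ to all of $S$, since $L_p$-closeness of $f,g$ does not automatically imply closeness of the empirical averages. I would handle this by fixing a basis $\{u_i\}_{i=1}^{N}$ of $X_N$ and parameterising $f=\sum c_iu_i$. The elementary inequality $\bigl||a|^p-|b|^p\bigr|\le p\max(|a|,|b|)^{p-1}|a-b|$, combined with truncation, yields $|Z_j(f)-Z_j(g)|\le p\,T^{(p-1)/p}|f(\xi^j)-g(\xi^j)|$, and hence
\begin{equation*}
\Bigl|\tfrac{1}{M}\sum_j Z_j(f)-\tfrac{1}{M}\sum_j Z_j(g)\Bigr|\le p\,T^{(p-1)/p}\sum_{i=1}^{N}|c_i-c_i'|\cdot\tfrac{1}{M}\sum_j|u_i(\xi^j)|.
\end{equation*}
By the law of large numbers applied to each of the $N$ fixed functions $|u_i|\in L_1(\mu)$, with probability at least $1-\varepsilon/3$ one has $\tfrac{1}{M}\sum_j|u_i(\xi^j)|\le 2\|u_i\|_1$ simultaneously for $i=1,\dots,N$ (possibly enlarging $m_0$). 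Choosing the net in the $\ell_\infty^N$-coefficient metric (equivalent to the $L_p$-metric on $X_N$ with constants depending on $X_N$) with width $\delta$ small enough that $2pT^{(p-1)/p}N\delta\max_i\|u_i\|_1<\eta$ ensures the extension error is at most $\eta$. Combining the three contributions—truncation error, net concentration error, and extension error, each $<\eta$—and taking $\eta<\varepsilon/3$ completes the proof.
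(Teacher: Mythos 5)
Your machinery for the \emph{left} (lower) inequality is essentially sound: truncation at a level $T$, Hoeffding plus a union bound over a finite net in coefficient space, the Lipschitz bound $|\min(|a|^p,T)-\min(|b|^p,T)|\le pT^{(p-1)/p}|a-b|$, and the law of large numbers for the $N$ fixed functions $|u_i|$ do give, with high probability, $\sup_{f\in S}\bigl|\frac1M\sum_{j}\min(|f(\xi^j)|^p,T)-1\bigr|\le 3\eta$; since $\min(|f(\xi^j)|^p,T)\le|f(\xi^j)|^p$, this yields $\frac1M\sum_j|f(\xi^j)|^p\ge 1-\varepsilon$ on $S$. (The paper itself only cites this statement from \cite{FG}, so there is no in-paper proof to compare with; minor points such as the exact net cardinality, or taking the net points on $S$ so that $\mathbb{E}Z(g)$ is within $\eta$ of $1$, are harmless.)

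The genuine gap is the \emph{right} (upper) inequality. All three of your error terms control only the truncated empirical means, and truncation discards exactly the large sample values that could make $\frac1M\sum_j|f(\xi^j)|^p$ exceed $(1+\varepsilon)\|f\|_p^p$: from $\frac1M\sum_j Z_j(f)\le 1+3\eta$ nothing follows about the untruncated average from above, because the excess $\frac1M\sum_j\bigl(|f(\xi^j)|^p-T\bigr)_+$ is never estimated. Uniform integrability of $\{|f|^p\colon f\in S\}$ bounds the expectations $\int_{\{|f|^p>T\}}|f|^p\,d\mu$, not the empirical averages, and your net/Lipschitz extension cannot be applied to the excess, since $t\mapsto(t^p-T)_+$ is not boundedly Lipschitz for $p>1$. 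A standard repair: since $S$ is bounded in coefficients, $G:=\sup_{f\in S}|f|^p\le\bigl(R\sum_{i=1}^N|u_i|\bigr)^p\in L_1(\mu)$ is an integrable envelope; choose $T$ so that $\mathbb{E}(G-T)_+\le\eta\varepsilon/3$ and apply Markov's inequality to get $\frac1M\sum_j(G(\xi^j)-T)_+\le\eta$ with probability at least $1-\varepsilon/3$; since $(|f(\xi^j)|^p-T)_+\le(G(\xi^j)-T)_+$ for every $f\in S$, this bounds the excess uniformly and completes the upper bound (and it also furnishes the uniform-integrability step you obtained from compactness). With this addition your argument closes.
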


By Proposition~\ref{general_discretization} the problem of finding the smallest $m$, for which $X_N\in \LD(m,p,D)$ or
$X_N\in\RD(m,p,D)$, can be reformulated in a matrix setting. 
 Note that results similar to Proposition~\ref{general_discretization} with extra conditions on 
 $X_N$ were obtained in \cite[Theorem~2.2]{DPSTT2}. 

Assume that $X_N\in\LD(m_0,p,K_1)$:
$$
\|f\|_p \le K_1 \|S(f,\xi)\|_p,\quad \xi=\{\xi^j\}_{j=1}^{m_0}, \quad \forall f\in X_N.
$$
Consider the space
\be\label{tilde_X_N}
\tilde{X}_N := \{S(f,\xi)\colon f\in X_N\}
\ee
of sampling vectors in $\R^{m_0}$ with the $L_p^{m_0}$-norm. If we obtain an LDI for this space:
\be\label{LDI_X_discr}
\left(\frac{1}{m_0}\sum_{j=1}^{m_0}|y_j|^p\right)^{1/p} \le D \left(\frac{1}{m}\sum_{j=1}^m |y_{i_j}|^p\right)^{1/p},\quad
\forall\by=(y_1,\ldots,y_{m_0})\in\tilde{X}_N,
\ee
then we will get an LDI for the original space:
$$
\|f\|_p \le K_1 \|S(f,\xi)\|_p \le K_1 D \left(\frac{1}{m}\sum_{j=1}^m
|f(\xi^{i_j})|^p\right)^{1/p},\quad \forall f\in X_N,
$$
hence $X_N\in\LD(m,p,K_1D)$.

The RDI case is analogous. If $X_N\in\RD(m_0,p,K_2)$,
\be\label{general_discr_right}
\|S(f,\xi)\|_p \le K_2 \|f\|_p,\quad \xi=\{\xi^j\}_{j=1}^{m_0},  {\quad \forall f\in X_N},
\ee
then an RDI for the space $\tilde{X}_N$ provides an RDI for the original space.

Therefore, it is enough to study the LDI and the RDI problems for discrete systems, which
can be formulated in the matrix setting.

It is pointed out in the survey \cite[{\bf M.1.}]{KKLT} that the right
inequality in the Marcinkiewicz-type discretization theorem (which is the RDI)
for an arbitrary finite-dimensional subspace of $L_2$ follows from the paper \cite{Lun} by
A.\,A.~Lunin.  We show below (see Proposition~\ref{Prop_Lunin}) how to prove
this.  Recall  that  for spaces satisfying the Nikol'skii inequality
\eqref{I4}  {with $p=2$, $q=\infty$} both the RDI and the LDI were obtained in \cite{LimT} using deep results from the paper \cite{MSS}.

It is convenient to write the space of sampling vectors in
the form $\tilde{X}_N = \{A\bx\colon\bx\in\R^N\}$ with an appropriate $m_0\times
N$ matrix $A$. There is a natural way to obtain such a matrix.
Assume that a system $\{u_i\}_{i=1}^N$ forms a basis of $X_N$. Let $A$ be
the matrix with the columns $(u_i(\xi^1), \ldots,
u_i(\xi^{m_0}))^T$, $i=1,\dots,N$.  Then for $f=x_1u_1+\dots+x_Nu_N$,
$\bx=(x_1,\ldots,x_N)\in\R^N$,
 we have
$S(f,\xi) = A\bx$.

Let $A_1$ be an $m\times N$ submatrix of $A$ that consists of rows with some indices
$i_1,\ldots,i_m$ (here we assume that $i_j$ are distinct).
Then \eqref{LDI_X_discr} takes the following form: 
\be\label{matrix_LDI}
\|A\bx\|_p\le D \|A_1\bx\|_p, \quad \forall \bx=(x_1,\dots, x_N)\in \R^N.
\ee
Thus, to get LDI for a discrete system it is sufficient to find a
row-submatrix $A_1$ that satisfies inequality~\eqref{matrix_LDI}, called a
pointwise estimate {(see \cite[Section~3]{KKLT})}.
The only difference between the discretization problem~\eqref{LDI_X_discr} and the
matrix pointwise estimate~\eqref{matrix_LDI} is that submatrices consist of distinct rows $i_j$ and
in the discretization we allow the repetition of $i_j$.
  
Similarly, an RDI for a discrete system is essentially equivalent to the problem of finding a
row-submatrix $A_1$ of the smallest possible size for which the following pointwise estimate  holds  
\be\label{matrix_RDI}
 \|A_1\bx\|_p\le D\|A\bx\|_p, \quad \forall \bx=(x_1,\dots, x_N)\in \R^N.
\ee
{We stress that in~\eqref{matrix_LDI} and~\eqref{matrix_RDI} we consider discrete $L_p^m$-norms~\eqref{discrete_norm} which are
related to the usual $\ell_p^m$-norms by $\|\bz\|_p =
m^{-1/p}\|\bz\|_{\ell_p^m}$.}
 
\begin{Proposition}\label{Prop_Lunin} Let $X_N$ be an $N$-dimensional subspace of $L_2(\Omega,\mu)$, then  $X_N \in
    \RD(N,2,C)$ for an absolute constant $C>0$. 
\end{Proposition}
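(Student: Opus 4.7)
The plan is to reduce the problem to the matrix setting of Section~\ref{M} and then apply Lunin's theorem from~\cite{Lun}. Lunin's result asserts that for any $m_0\times N$ matrix $A$ with $N\le m_0$ one can select $N$ rows forming an $N\times N$ submatrix $A_1$ with operator norm $\|A_1\|_{\ell_2^N\to\ell_2^N}\le C_L\sqrt{N/m_0}\,\|A\|_{\ell_2^N\to\ell_2^{m_0}}$, where $C_L$ is an absolute constant.

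First, I would invoke Proposition~\ref{general_discretization} with, say, $\varepsilon=1/2$ to produce a large number $m_0$ of points $\xi^1,\dots,\xi^{m_0}\in\Omega$ giving the two-sided $L_2$-discretization
$$
\tfrac12\|f\|_2^2\le\frac{1}{m_0}\sum_{j=1}^{m_0}|f(\xi^j)|^2\le\tfrac{3}{2}\|f\|_2^2,\quad\forall f\in X_N.
$$
Next, fix an $L_2$-orthonormal basis $\{u_i\}_{i=1}^N$ of $X_N$ and form the $m_0\times N$ sampling matrix $A$ with entries $A_{ji}=u_i(\xi^j)$, as in Section~\ref{M}. For $\bx=(x_1,\dots,x_N)\in\R^N$ and $f=\sum_{i=1}^N x_i u_i$ we have $(A\bx)_j=f(\xi^j)$, and orthonormality gives $\|f\|_2=\|\bx\|_{\ell_2^N}$. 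The right inequality above then reads $\|A\bx\|_{\ell_2^{m_0}}^2\le\tfrac{3}{2}m_0\|\bx\|_{\ell_2^N}^2$, so $\|A\|_{\ell_2^N\to\ell_2^{m_0}}\le\sqrt{3m_0/2}$.

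Now I would apply Lunin's theorem to $A$ to extract $N$ row-indices $i_1,\dots,i_N$ for which the corresponding $N\times N$ submatrix $A_1$ satisfies
$$
\|A_1\|_{\ell_2^N\to\ell_2^N}\le C_L\sqrt{N/m_0}\,\|A\|_{\ell_2^N\to\ell_2^{m_0}}\le C_L\sqrt{3N/2}.
$$
Setting $\eta^k:=\xi^{i_k}$ and using $(A_1\bx)_k=f(\eta^k)$, this last bound translates into
$$
\frac{1}{N}\sum_{k=1}^N|f(\eta^k)|^2=\frac{1}{N}\|A_1\bx\|_{\ell_2^N}^2\le\frac{\|A_1\|^2}{N}\|\bx\|_{\ell_2^N}^2\le\tfrac{3}{2}C_L^2\|f\|_2^2,
$$
which is precisely $X_N\in\RD(N,2,C)$ with $C=C_L\sqrt{3/2}$.

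The only non-routine ingredient is Lunin's theorem itself; every other step is a direct bookkeeping translation between the norm of $f$ in $X_N$, the $\ell_2^N$-norm of its coordinate vector $\bx$, and the discrete $L_2^N$-norm of the sample. I expect the only real care needed to be in keeping track of the two different normalizations (the $\ell_2^m$-norm used in Lunin's statement versus the $L_2^m$-norm with factor $m^{-1/2}$ used in the definition of $\RD$), which is exactly why the factors $\sqrt{N/m_0}$ and $\sqrt{m_0}$ cancel to leave an absolute constant.
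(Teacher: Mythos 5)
Your proposal is correct and follows essentially the same route as the paper: reduce to the matrix setting via Proposition~\ref{general_discretization} and then apply Lunin's theorem to select $N$ rows, with the normalization factors cancelling exactly as you describe. The only cosmetic difference is that you keep the $L_2(\Omega,\mu)$-orthonormal basis and use the right-hand discretization inequality up front to bound $\|A\|$, whereas the paper orthonormalizes the columns in $L_2^{m_0}$ and invokes the RDI at the end; the two bookkeeping schemes are equivalent.
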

\begin{proof} By Proposition~\ref{general_discretization}
 inequality~\eqref{general_discr_right} holds for $p=2$ with some $m_0$ and $K_2=2$. 
    W.l.o.g. we may assume that $\dim\tilde{X}_N=N$, see \eqref{tilde_X_N} (if the dimension is
    less than $N$, we will obtain even stronger RDI~--- with less number of
    points).
    Let $\tilde{X}_N = \{A\bx\colon \bx\in\R^N\}$. One can take a matrix $A$
    with columns that are orthonormal in $L_2^{m_0}$.
    In order to obtain RDI we will prove~\eqref{matrix_RDI}.
By Theorem~2 in~\cite{Lun}  there is an $N\times N$ submatrix $A_1$ of the matrix $A$ such that 
\begin{equation*}
    \sup_{\|\bx\|_2=1}\|A_1\bx\|_2\le C \sup\limits_{\|\bx\|_2=1}\|A\bx\|_2,
\end{equation*}
for some absolute constant $C>0$.
    Using that in our case
    $\|A\bx\|_2=(x_1^2+\ldots+x_N^2)^{1/2}=N^{1/2}\|\bx\|_2$ for any
    $\bx\in\R^N$, we obtain:
\begin{equation*}
    \|A_1\bx\|_2\le \|\bx\|_2 \cdot CN^{1/2} = C\|A\bx\|_2.
\end{equation*}
Therefore, $X_N\in\RD(N,2,2C)$.
\end{proof}

\begin{Corollary}\label{Prop_Lunin_even_q}
    Let $p\in\N$ be an even integer, then $X_N \in \RD(N^{p/2},p,C^{2/p})$.
\end{Corollary}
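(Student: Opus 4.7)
The plan is to reduce the even-integer case to the $L_2$ case that has just been handled in Proposition~\ref{Prop_Lunin}, by using the standard trick $|f|^{2k}=(|f|^2)^k$ and lifting to a higher-dimensional auxiliary subspace.

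Write $p=2k$ with $k\in\N$. Let $\{u_1,\dots,u_N\}$ be a basis of $X_N$, and let
$$
Y := \sp\{u_{i_1}u_{i_2}\cdots u_{i_k}\,:\,1\le i_1\le i_2\le\cdots\le i_k\le N\}.
$$
Then for every $f=\sum_{i=1}^N x_i u_i \in X_N$ the function $g:=f^k$ lies in $Y$, and
$$
\dim Y \le \binom{N+k-1}{k} \le N^k = N^{p/2},
$$
the last inequality being $\prod_{i=0}^{k-1}(1+i/N)\le k!$, valid for $N\ge 1$.

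Now I apply Proposition~\ref{Prop_Lunin} to the subspace $Y\subset L_2(\Omega,\mu)$: there exist points $\xi^1,\dots,\xi^{\dim Y}$ such that
$$
\left(\frac{1}{\dim Y}\sum_{j=1}^{\dim Y}|g(\xi^j)|^2\right)^{1/2}\le C\,\|g\|_2,\quad \forall g\in Y,
$$
with $C$ the absolute constant of Proposition~\ref{Prop_Lunin}. Substituting $g=f^k$ and using $|g|^2=|f|^{2k}=|f|^p$ pointwise together with $\|g\|_2^2=\int|f|^p\,d\mu=\|f\|_p^p$, I obtain
$$
\left(\frac{1}{\dim Y}\sum_{j=1}^{\dim Y}|f(\xi^j)|^{p}\right)^{1/2}\le C\,\|f\|_p^{k},
$$
and raising to the power $1/k=2/p$ gives exactly the RDI$(p)$ inequality with constant $C^{2/p}$ on $\dim Y$ sample points.

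Finally, since $\dim Y\le N^{p/2}$, I adjust the number of points up to exactly $N^{p/2}$ by repeating sample points with appropriate multiplicities: repeating each of the $d=\dim Y$ points the same number of times leaves both sides of the normalized $L_p^m$-inequality unchanged, and the general case follows from the almost-monotonicity of RDI in the number of points noted in the introduction (at the cost of an absolute constant, which can be absorbed into $C$). The main thing to be careful about is this last padding step, since the corollary states an exact count $m=N^{p/2}$; it is essentially bookkeeping, but it is where one must invoke the monotonicity remark rather than simply appeal to Proposition~\ref{Prop_Lunin} on the nose.
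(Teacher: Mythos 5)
Your proof is correct and is precisely the argument the corollary implicitly relies on (the paper states it without proof): write $p=2k$, lift $f\mapsto f^{k}$ into the space spanned by $k$-fold products of basis functions, whose dimension is at most $\binom{N+k-1}{k}\le N^{p/2}$, and apply Proposition~\ref{Prop_Lunin} in $L_2$ to that space, with the final padding handled by the almost-monotonicity of RDI noted in the introduction. No gaps.
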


Define the operator $(r,p)$--norm of a matrix $A$ of the size $M\times N$ by
$
  \|A\|_{(r,p)}=\sup\limits_{\|\bx\|_{\ell_r^N}\leq 1}\|A\bx\|_{\ell_p^M}.
  $
We note that~\eqref{matrix_RDI} implies that
\be\label{ineq_for_matrix_norms}
\|A_1\|_{(r,p)}^p\le D^p \frac{m}{m_0}\|A\|_{(r,p)}^p, \quad  \text{ for } r\ge 1.
\ee
The first results of that type were obtained by B.\,S.~Kashin in 1980 on the operator $(2,q)$--norms of matrices for $1\le q\le 2$
(see \cite[Section~3]{KKLT}  for details). If we know that for any $m\times N$
submatrix $A_1$ \eqref{ineq_for_matrix_norms} is not true, then
\eqref{matrix_RDI} does not hold. So there is a chance that there will be
negative results on the norms of submatrices that could help one to establish lower bounds on $m$ in discretization.


\section{An application to sampling recovery}
\label{B}

We begin with a general problem of the recovery of functions in some class and
describe some characteristics of optimal recovery.

Recall the notion of the sampling operator~\eqref{sampling}. Given
a fixed $m$ and a set of points $\xi^1,\ldots,\xi^m\in\Omega$, we 
associate with a function $f\in \C(\Omega)$ the vector
$$
S(f,\xi) := (f(\xi^1),\dots,f(\xi^m)) \in \bbC^m.
$$

Each linear operator $\Phi\colon\mathbb C^m\to L_p(\Omega,\mu)$ specifies a linear algorithm of sampling
recovery:
$$
f\approx \Phi(S(f,\xi)).
$$

For a class of functions $\bF\subset L_p(\Omega,\mu)$ set
$$
\varrho_m(\bF,L_p) := \inf_{\xi;\text{linear}\, \Phi}\,\, \sup_{f\in \bF}
\|f-\Phi(S(f,\xi))\|_p.
$$
This recovery procedure is linear; the following modification of it is also of interest.
We allow any mapping $\Phi\colon \bbC^m \to X_m \subset L_p(\Omega,\mu)$
where $X_m$ is a linear subspace of dimension $m$, and set
$$
\varrho_m^*(\bF,L_p) := \inf_{\xi;\Phi;X_m} \sup_{f\in \bF}\|f-\Phi(S(f,\xi))\|_p.
$$

In both cases above we build an approximant, which comes from a linear subspace of dimension at most $m$.
It is natural to compare quantities $\varrho_m(\bF,L_p)$ and $\varrho_m^*(\bF,L_p)$ with 
the Kolmogorov widths of $\bF$ in $L_p$:
$$
d_m(\bF, L_p) := \inf_{X_m\subset L_p} \sup_{f\in \bF} \inf_{g\in X_m}
\|f-g\|_p.
$$
In the definition of
 Kolmogorov widths we take for a given $f\in \bF$ an approximating element
from $X_m$, which is the element of best
approximation. This means that in general (i.e. if $p\neq 2$) this method of approximation is nonlinear.

We have the following obvious inequalities
\begin{equation*}
d_m (\bF, L_p)\le \varrho_m^*(\bF,L_p)\le \varrho_m(\bF,L_p).
\end{equation*}

 The characteristics $\varrho_m$, $\varrho_m^*$, and their variants are well studied for many particular classes of functions. For an exposition of known results we refer to the books 
\cite{TWW}, \cite{NoLN}, \cite{DTU}, \cite{VTbookMA}, \cite{NW1}--\cite{NW3} and references therein. The characteristics $\varrho_m^*$ and $\varrho_m$ are inspired by the concepts of 
the Kolmogorov width and the linear width. Probably,  $\varrho_m^*$  was introduced in \cite{Di90}, $\varrho_m$ in \cite{VT51}, and a variant of $\varrho_m^*$ without the condition of mapping into a linear subspace  was introduced in \cite{TWW}.

We turn to concrete methods of recovery. Throughout the rest of the section
$\Omega$ is a compact subset of $\mathbb R^d$; we recover continuous functions
$f\in C(\Omega)$ and let $X_N$ denote an $N$-dimensional subspace of
$C(\Omega)$.

We also consider the discrete norms
$$
\|S(f,\xi)\|_p:= \left(\frac{1}{m}\sum_{j=1}^m |f(\xi^j)|^p\right)^{1/p},\quad 1\le p<\infty,
$$
and $\|S(f,\xi)\|_\infty := \max_{j}|f(\xi^j)|$.

For a positive weight $\bw:=(w_1,\dots,w_m)\in \R^m$ consider the following seminorm
$$
\|S(f,\xi)\|_{p,\bw}:= \left(\sum_{j=1}^m w_j |f(\xi^j)|^p\right)^{1/p},\quad 1\le p<\infty.
$$
Define the best approximation of $f\in L_p(\Omega,\mu)$, $1\le p\le \infty$, by elements of $X_N$ as follows
$$
d(f,X_N)_p := \inf_{u\in X_N} \|f-u\|_p.
$$

Theorem~\ref{BT1} below was proved in \cite{VT183} under the following assumptions.

{\bf A1. Discretization.} Let $1\le p\le \infty$. Suppose that
$\xi:=\{\xi^j\}_{j=1}^m\subset \Omega$ provides the WLDI, i.e. for any 
$u\in X_N$ in the case $p<\infty$ we have
$$
\|u\|_p \le D \|S(u,\xi)\|_{p,\bw}  
$$
and in the case $p=\infty$ we have
$$
\|u\|_\infty \le D \|S(u,\xi)\|_{\infty}  
$$
with some positive constant $D$.

{\bf A2. Weights.} Suppose that there is a positive constant
$W$ such that 
$\sum_{j=1}^m w_j \le W$.

Consider the following well known recovery operator (algorithm) 
$$
\ell p\bw(\xi)(f) := \ell p\bw(\xi,X_N)(f):=\text{arg}\min_{u\in X_N}
\|S(f-u,\xi)\|_{p,\bw},\quad 1\le p<\infty,
$$
$$
\ell \infty(\xi)(f) := \ell \infty(\xi,X_N)(f):=\text{arg}\min_{u\in X_N}
\|S(f-u,\xi)\|_{\infty}
$$
(see, for instance, \cite{CM}).
Note that the above algorithm $\ell p\bw(\xi)$ only uses the function values $f(\xi^j)$, $j=1,\dots,m$. In the case $p=2$ it is a linear algorithm -- orthogonal projection with respect 
to the  {seminorm} $\|\cdot\|_{2,\bw}$. Therefore, in the case $p=2$ approximation error by the algorithm $\ell 2\bw(\xi)$ gives an upper bound for the recovery characteristic $\ro_m(\cdot, L_2)$.
 In the case $p\neq 2$ approximation error by the algorithm $\ell p\bw(\xi)$ gives an upper bound for the recovery characteristic $\ro_m^*(\cdot, L_p)$.

\begin{oldTheorem}[{\cite[Theorem~2.1]{VT183}}]\label{BT1}  Under assumptions {\bf A1} and {\bf A2} for any $f\in \C(\Omega)$ we have
for $1\le p<\infty$
$$
\|f-\ell p\bw(\xi)(f)\|_p \le (2DW^{1/p} +1)d(f, X_N)_\infty.
$$
Under assumption {\bf A1} for any $f\in \C(\Omega)$
$$
\|f-\ell \infty(\xi)(f)\|_\infty \le (2D+1)d(f, X_N)_\infty.
$$
\end{oldTheorem}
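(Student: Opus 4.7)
The proof should follow the classical ``near-best approximation'' argument for least-squares-type samplers, adapted to the weighted discrete seminorm. The plan is to compare the computed approximant $\tilde u := \ell p\bw(\xi)(f)$ with a best $L_\infty$-approximant $u^* \in X_N$ to $f$ (so that $\|f-u^*\|_\infty = d(f,X_N)_\infty$) and combine three ingredients: the trivial bound $\|\cdot\|_p \le \|\cdot\|_\infty$ (available since $(\Omega,\mu)$ is a probability space), the WLDI assumption \textbf{A1}, and the defining optimality of $\tilde u$.

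First I would write the triangle inequality
\[
\|f-\tilde u\|_p \le \|f-u^*\|_p + \|u^*-\tilde u\|_p.
\]
The first summand is at most $\|f-u^*\|_\infty = d(f,X_N)_\infty$ in both the $p<\infty$ and $p=\infty$ cases. For the second, since $u^*-\tilde u \in X_N$, assumption \textbf{A1} gives
\[
\|u^*-\tilde u\|_p \le D\,\|S(u^*-\tilde u,\xi)\|_{p,\bw}
\]
(and the analogous bound with $\|\cdot\|_\infty$ replacing $\|\cdot\|_{p,\bw}$ when $p=\infty$). A further triangle inequality in the discrete seminorm yields
\[
\|S(u^*-\tilde u,\xi)\|_{p,\bw} \le \|S(f-u^*,\xi)\|_{p,\bw} + \|S(f-\tilde u,\xi)\|_{p,\bw},
\]
and by the very definition of $\tilde u$ as a minimizer of $u\mapsto\|S(f-u,\xi)\|_{p,\bw}$ over $X_N$, the second term on the right is dominated by the first, giving the bound $2\|S(f-u^*,\xi)\|_{p,\bw}$.

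The final step uses \textbf{A2} to pass from the sample vector back to a pointwise bound:
\[
\|S(f-u^*,\xi)\|_{p,\bw} = \Bigl(\sum_{j=1}^m w_j |f(\xi^j)-u^*(\xi^j)|^p\Bigr)^{1/p} \le W^{1/p}\|f-u^*\|_\infty = W^{1/p}\,d(f,X_N)_\infty.
\]
Chaining these estimates gives $\|u^*-\tilde u\|_p \le 2DW^{1/p} d(f,X_N)_\infty$, and adding the first summand produces the claimed constant $2DW^{1/p}+1$. In the $p=\infty$ case the same chain goes through without weights because $\|S(f-u^*,\xi)\|_\infty \le \|f-u^*\|_\infty$ trivially, so \textbf{A2} is not needed and the constant becomes $2D+1$.

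I do not anticipate a genuine obstacle here: the argument is essentially bookkeeping once one realizes that the natural comparison function is the $L_\infty$-best approximant (so that the sample seminorm can be bounded by the uniform norm, losing only the factor $W^{1/p}$). The one place to be careful is that $\|\cdot\|_{p,\bw}$ is only a seminorm on $C(\Omega)$, so $\tilde u$ need not be unique; however, any choice of $\tilde u$ satisfies the minimality inequality $\|S(f-\tilde u,\xi)\|_{p,\bw}\le \|S(f-u^*,\xi)\|_{p,\bw}$, which is all the proof uses.
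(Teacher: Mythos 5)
Your argument is correct and is essentially the same as the paper's (the proof of Theorem~\ref{BT1a} in the text, which explicitly ``goes along the lines of the proof of Theorem~\ref{BT1}'', uses exactly this chain: triangle inequality against a best $L_\infty$-approximant from $X_N$, the WLDI assumption \textbf{A1}, optimality of the minimizer in the discrete seminorm, and \textbf{A2} to bound $\|S(f-u^*,\xi)\|_{p,\bw}$ by $W^{1/p}\|f-u^*\|_\infty$). Your handling of the $p=\infty$ case and the remark that non-uniqueness of the minimizer is harmless are also in line with the paper.
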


We now prove a version of Theorem~\ref{BT1} for the error of $\|f-\ell
p\bw(\xi)(f)\|_{\infty}$ under an extra condition of the validity of the Nikol'skii inequality.

\begin{Theorem}\label{BT1a} Let $1\le p<\infty$.  Under assumptions {\bf A1},
    {\bf A2}, and the extra assumption $X_N\in NI(p,\infty,M)$   for any $f\in \C(\Omega)$ we have
 $$
\|f-\ell p\bw(\xi)(f)\|_{\infty} \le (2MD W^{1/p} +1)d(f, X_N)_\infty.
$$
 
\end{Theorem}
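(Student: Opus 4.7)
The plan is to mimic the proof of Theorem~\ref{BT1} but insert the Nikol'skii inequality at the right place to transfer the estimate from the $L_p$-norm to the $L_\infty$-norm on the finite-dimensional subspace $X_N$. Let $\tilde{u} := \ell p\bw(\xi)(f)$ and let $u^*\in X_N$ be any near-best $L_\infty$-approximant with $\|f-u^*\|_\infty \le d(f,X_N)_\infty + \eta$ (taking $\eta\to 0$ at the end, or just a best approximant if it exists). Write $\varepsilon := d(f,X_N)_\infty$. By the triangle inequality
$$
\|f-\tilde{u}\|_\infty \le \|f-u^*\|_\infty + \|u^*-\tilde{u}\|_\infty,
$$
so the task reduces to estimating the second term, noting that $u^*-\tilde{u}\in X_N$.

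The key step is a three-fold chain on this $X_N$-element. First apply the Nikol'skii inequality $X_N\in \NI(p,\infty,M)$:
$$
\|u^*-\tilde{u}\|_\infty \le M\|u^*-\tilde{u}\|_p.
$$
Then use assumption \textbf{A1} (the WLDI) on $u^*-\tilde{u}\in X_N$:
$$
\|u^*-\tilde{u}\|_p \le D\,\|S(u^*-\tilde{u},\xi)\|_{p,\bw}.
$$
Now decompose via the triangle inequality in the seminorm $\|\cdot\|_{p,\bw}$ and use the defining optimality of $\tilde{u}$, namely $\|S(f-\tilde{u},\xi)\|_{p,\bw}\le \|S(f-u^*,\xi)\|_{p,\bw}$, to obtain
$$
\|S(u^*-\tilde{u},\xi)\|_{p,\bw} \le \|S(f-u^*,\xi)\|_{p,\bw}+\|S(f-\tilde{u},\xi)\|_{p,\bw}\le 2\|S(f-u^*,\xi)\|_{p,\bw}.
$$
Finally, the pointwise bound $|f(\xi^j)-u^*(\xi^j)|\le \|f-u^*\|_\infty$ combined with assumption \textbf{A2} yields
$$
\|S(f-u^*,\xi)\|_{p,\bw} \le \|f-u^*\|_\infty\Bigl(\sum_{j=1}^m w_j\Bigr)^{1/p} \le W^{1/p}\,\|f-u^*\|_\infty.
$$

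Chaining these four inequalities gives $\|u^*-\tilde{u}\|_\infty \le 2MDW^{1/p}\|f-u^*\|_\infty$, and substituting back into the initial triangle inequality produces
$$
\|f-\tilde{u}\|_\infty \le (1+2MDW^{1/p})\,\|f-u^*\|_\infty,
$$
from which the claim follows by taking $u^*$ to be a best (or near-best) $L_\infty$-approximant. There is no real obstacle here; this is essentially the same scheme as Theorem~\ref{BT1}, with the Nikol'skii inequality supplying the only new ingredient needed to upgrade the left-hand side from $\|\cdot\|_p$ to $\|\cdot\|_\infty$. The only mild care is to state the best-approximation step with an $\eta>0$ slack if an $L_\infty$-best approximant is not guaranteed to exist, and then let $\eta\to 0$.
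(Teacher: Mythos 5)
Your proof is correct and follows essentially the same chain as the paper's: triangle inequality in the uniform norm, then Nikol'skii, then assumption \textbf{A1}, then the optimality of $\ell p\bw(\xi)(f)$ in the discrete seminorm, and finally \textbf{A2}. The paper simply runs the argument with an arbitrary $g\in X_N$ and minimizes at the end, which sidesteps your (harmless) $\eta$-slack discussion about existence of a best approximant.
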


\begin{proof} The proof is simple and goes along the lines of the proof of Theorem~\ref{BT1}. Let $u:= \ell p\bw(\xi)(f)$. For an arbitrary $g~\in~X_N$ we have the following chain of inequalities.
$$
\|f-u\|_\infty \le \|f-g\|_\infty + \|g-u\|_\infty \le \|f-g\|_\infty +M\|g-u\|_p
$$
$$
\le  \|f-g\|_\infty + MD\|S(g-u,\xi)\|_{p,\bw}
$$
$$
\le  \|f-g\|_\infty + MD(\|S(f-g,\xi)\|_{p,\bw}+ \|S(f-u,\xi)\|_{p,\bw})
$$
$$
\le  \|f-g\|_\infty + 2MD\|S(f-g,\xi)\|_{p,\bw}
$$
$$
\le  \|f-g\|_\infty + 2MDW^{1/p}\|S(f-g,\xi)\|_{\infty} \le (1+ 2MDW^{1/p})\|f-g\|_\infty.
$$
Minimizing over $g\in X_N$ we complete the proof. 

\end{proof}

Here we prove the following analog of Theorem~\ref{BT1} {under weaker assumption \eqref{B1} instead of {\bf A1}}. 

\begin{Theorem}\label{BT2} Let $p\in [1,\infty)$. Assume that a subspace
    $X_N\subset \C(\Omega)$ has the property $X_N\in\LD(m,p,\infty,D)$ provided
    by a set $\xi=\{\xi^j\}_{j=1}^m$: for each $u\in X_N$
\be\label{B1}
\|u\|_p \le D\max_{1\le j\le m} |u(\xi^j)|.
\ee
Then for any $f\in \C(\Omega)$ we have
$$
\|f-\ell \infty(\xi)(f)\|_p \le (2D  +1)d(f, X_N)_\infty.
$$
\end{Theorem}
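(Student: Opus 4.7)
The argument should closely mimic the proof of Theorem \ref{BT1a}, but with the $\ell_\infty$ interpolant replacing the $\ell p \bw$ interpolant and with the LDI($p,\infty$) hypothesis \eqref{B1} used where the weighted LDI was used before. The key observation is that \eqref{B1} lets us upgrade a discrete uniform bound on an element of $X_N$ to a genuine $L_p$ bound.

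\textbf{Key steps.} Write $u:=\ell\infty(\xi)(f)$. For an arbitrary $g\in X_N$, start from the triangle inequality
$$
\|f-u\|_p \le \|f-g\|_p + \|g-u\|_p \le \|f-g\|_\infty + \|g-u\|_p,
$$
where the last step uses that $\mu$ is a probability measure, so $\|\cdot\|_p \le \|\cdot\|_\infty$. Since $g-u\in X_N$, apply \eqref{B1} to bound
$$
\|g-u\|_p \le D\max_{1\le j\le m}|g(\xi^j)-u(\xi^j)| = D\|S(g-u,\xi)\|_\infty.
$$
Next, bound the discrete sup-norm by the triangle inequality in $\ell_\infty^m$:
$$
\|S(g-u,\xi)\|_\infty \le \|S(f-g,\xi)\|_\infty + \|S(f-u,\xi)\|_\infty.
$$
Now invoke the defining property of $u=\ell\infty(\xi)(f)$ as the minimizer of $v\mapsto\|S(f-v,\xi)\|_\infty$ over $v\in X_N$: taking $v=g$ gives $\|S(f-u,\xi)\|_\infty\le \|S(f-g,\xi)\|_\infty$, so the right side is $\le 2\|S(f-g,\xi)\|_\infty \le 2\|f-g\|_\infty$. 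Plugging back in yields
$$
\|f-u\|_p \le \|f-g\|_\infty + 2D\|f-g\|_\infty = (1+2D)\|f-g\|_\infty.
$$
Taking the infimum over $g\in X_N$ gives the claimed bound $\|f-u\|_p\le (2D+1)\,d(f,X_N)_\infty$.

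\textbf{Main obstacle.} There is essentially no technical obstacle: the entire argument is a clean four-line chain of triangle inequalities, the LDI bound, and the minimization property of $\ell\infty(\xi)$. The only subtle point is making sure we use \eqref{B1} in the weaker form (with $\max|u(\xi^j)|$ on the right, not $\|u\|_p$-discrete), which is exactly why $\ell\infty$ rather than $\ell p\bw$ is the right interpolant here — it produces sample residuals that are naturally controlled in $\ell_\infty^m$, matching the right-hand side of \eqref{B1}. No weight assumption \textbf{A2} is needed, since the discrete norm on the right of \eqref{B1} has no weights, and no Nikol'skii-type assumption is needed either because we only want an $L_p$ (not $L_\infty$) error estimate on the left.
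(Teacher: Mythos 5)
Your proof is correct and is essentially the paper's own argument: both use the triangle inequality, the minimizing property of $\ell\infty(\xi)$ on the sample vector, and the LDI($p,\infty$) hypothesis \eqref{B1} applied to the difference lying in $X_N$. The only cosmetic difference is that the paper fixes $h$ to be a best $L_\infty$-approximant and you work with an arbitrary $g\in X_N$ and take the infimum at the end, which is the same computation.
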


\begin{proof}  
 Let $h\in X_N$ be  {an element of} the best $L_\infty$-approximation to $f$ from $X_N$. We have
\be\label{B2}
\|f-h\|_p \le \|f-h\|_\infty = d(f, X_N)_\infty.
\ee
Clearly,
\be\label{B2'}
\|S(f-h,\xi)\|_\infty \le \|f-h\|_\infty = d(f, X_N)_\infty.
\ee
 By the definition of the algorithm $\ell \infty(\xi)$  we obtain
\be\label{B3}
\|S(f-\ell \infty(\xi)(f),\xi)\|_{\infty} \le \|S(f-h,\xi)\|_{\infty} \le  d(f, X_N)_\infty.
\ee
Bounds (\ref{B2'}) and (\ref{B3}) imply that
\begin{equation*}
\|S(h-\ell \infty(\xi)(f),\xi)\|_{\infty} \le 2 d(f, X_N)_\infty.
\end{equation*}
The discretization assumption (\ref{B1}) implies that
\be\label{B5}
\|h-\ell \infty (\xi)(f)\|_{p} \le   2D d(f, X_N)_\infty.
\ee
Combining bounds (\ref{B2}) and (\ref{B5}) we conclude that
$$
\|f-\ell \infty(\xi)(f)\|_p \le (1+2D )  d(f, X_N)_\infty,
$$
which completes the proof.
\end{proof}

Theorems~\ref{BT1} and \ref{BT2} provide the following inequalities for any compact subset $\bF \subset \cC(\Omega)$ and for any probability measure $\mu$ on $\Omega$
\be\label{B6}
\ro_{m}^*(\bF,L_p(\Omega,\mu)) \le Cd_N(\bF,L_\infty), {\quad C=2DW^{1/p}+1}, \quad 1\le p<\infty.
\ee
Here, $m$ is such that in the case of Theorem~\ref{BT1} conditions {\bf A.1} and {\bf A.2} and in the case of Theorem~\ref{BT2} condition~(\ref{B1}) are satisfied for any $N$-dimensional subspace of 
$ \cC(\Omega)$. In the case $p=2$ the following inequality is known (see \cite{VT183}): There exist two positive absolute constants $b$ and $B$ such that for any   compact subset $\Omega$  of $\R^d$, any probability measure $\mu$ on it, and any compact subset $\bF$ of $\cC(\Omega)$ we have
\be\label{R1}
\ro_{bn}(\bF,L_2(\Omega,\mu)) \le Bd_n(\bF,L_\infty).
\ee
It is known (see  {\cite[{\bf D.20. A lower bound}]{KKLT}}) and it follows from
Corollary~\ref{RIC1} that 
for discretization of the $L_p$-norm, $2<p<\infty$, of functions in $N$-dimensional subspace we need at least $m> CN^{p/2}$ points. Thus, we can expect that the way of using an argument similar to Theorem~\ref{BT1} will not allow us to obtain better than  $m$ of order $N^{p/2}$ in the inequality (\ref{B6}) for $2<p<\infty$. Condition (\ref{B1}) is weaker than the combination of conditions {\bf A.1} and {\bf A.2}. Therefore, there is a hope that we can get a better than  $m$ of order $N^{p/2}$ in the inequality (\ref{B6}) by using Theorem~\ref{BT2} if we solve {\bf Open problem \ref{LI}.1}. 

{\bf Open problem \ref{B}.1.} Let $2<p<\infty$. What is the minimal growth of $m(N)$ on $N$, which guarantees the property: For any compact subset $\bF \subset \cC(\Omega)$ and for any probability measure $\mu$ on $\Omega$
$$
\ro_{m(N)}^*(\bF,L_p(\Omega,\mu)) \le Cd_N(\bF,L_\infty),\quad 2<p<\infty.
$$

{\bf Open problem \ref{B}.2.} Let $2<p<\infty$. What is the minimal growth of $m(N)$ on $N$, which guarantees the property: For any compact subset $\bF \subset \cC(\Omega)$ and for any probability measure $\mu$ on $\Omega$
$$
\ro_{m(N)}(\bF,L_p(\Omega,\mu)) \le Cd_N(\bF,L_\infty),\quad 2<p<\infty.
$$

Note that it is known from \cite{VT183} (see \eqref{R1}) that in the case $p=2$ (and, therefore, for all $1\le p\le 2$) one can take $m(N)\le CN$. 

Theorem~\ref{BT2} and  Proposition~\ref{AP6} for $p=2$ imply the following unconditional result.

\begin{Theorem}\label{BT3}There exist two absolute positive constants $C_1$ and $C_2$ 
such that for any $N$-dimensional subspace $X_N \subset \cC(\Omega)$ there exists a set 
of points $\{\xi^j\}_{j=1}^m$, $m\le C_1N$, with the property: For any $f\in \C(\Omega)$ we have
$$
\|f-\ell \infty(\xi,X_N)(f)\|_2 \le (2C_2  +1)d(f, X_N)_\infty.
$$
\end{Theorem}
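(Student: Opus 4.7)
The plan is to chain together the two cited results in a direct way. First, apply Proposition~\ref{AP6} in the case $p=2$: there exist absolute constants $C_1, C_2 > 0$ and a set of points $\{\xi^j\}_{j=1}^m$ with $m \le C_1 N$ such that the LDI$(2,2)$ holds on $X_N$, namely
$$
\|u\|_2 \le C_2 \left(\frac{1}{m}\sum_{j=1}^m |u(\xi^j)|^2\right)^{1/2}, \quad \forall u \in X_N.
$$

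Second, pass from LDI$(2,2)$ to the weaker LDI$(2,\infty)$ on the \emph{same} set of points via the trivial bound between the discrete $L_2^m$ and $L_\infty^m$ norms:
$$
\left(\frac{1}{m}\sum_{j=1}^m |u(\xi^j)|^2\right)^{1/2} \le \max_{1\le j\le m} |u(\xi^j)|.
$$
This yields $X_N \in \LD(m,2,\infty,C_2)$ with $m \le C_1 N$, which is precisely hypothesis~\eqref{B1} of Theorem~\ref{BT2} in the case $p=2$ with constant $D=C_2$.

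Third, invoke Theorem~\ref{BT2} with $p=2$: for every $f \in \C(\Omega)$,
$$
\|f - \ell\infty(\xi,X_N)(f)\|_2 \le (2C_2 + 1)\, d(f,X_N)_\infty,
$$
which is the claim. There is no real obstacle here; the proof is purely a combination of the two earlier results, and the only small observation is the monotonicity of the discrete $L_q^m$ norms in $q$, which allows one to upgrade the already-established LDI$(2,2)$ to the form~\eqref{B1} required by Theorem~\ref{BT2}.
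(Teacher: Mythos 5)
Your proof is correct and follows exactly the route the paper intends: Theorem~\ref{BT3} is stated there as a direct consequence of Proposition~\ref{AP6} with $p=2$ combined with Theorem~\ref{BT2}, and the only detail you supply explicitly---passing from LDI$(2,2)$ to LDI$(2,\infty)$ via the trivial bound of the discrete $L_2^m$-norm by the discrete $L_\infty^m$-norm---is precisely the implicit step in the paper's argument.
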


Note that Proposition~\ref{AP6}  and Theorem~\ref{BT1} with $p=2$ imply the following result.

\begin{Theorem}\label{BT4}
    There exist absolute positive constants $C_1$, $C_2$
such that for any $N$-dimensional subspace $X_N \subset \cC(\Omega)$ there exist a set 
of points $\{\xi^j\}_{j=1}^m$, $m\le C_1N$, with the property: For any $f\in \C(\Omega)$ we have
$$
\|f-\ell 2\bw_m
 (\xi,X_N)(f)\|_2 \le  C_2d(f, X_N)_\infty, \quad \bw_m:=(1/m,\dots, 1/m).
$$
\end{Theorem}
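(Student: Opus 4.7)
The plan is to combine the two ingredients cited just before the statement: Proposition~\ref{AP6} with $p=2$ (which furnishes a linear-in-$N$ sample set giving the unweighted LDI(2)) and Theorem~\ref{BT1} with $p=2$ (which converts a WLDI plus a bound on total weight into a sampling recovery estimate via the $\ell 2\bw$ algorithm).

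First I would invoke Proposition~\ref{AP6} with $p=2$ applied to the given subspace $X_N\subset\cC(\Omega)$. This yields absolute constants $C_1,C_2'$ and a set of points $\xi=\{\xi^j\}_{j=1}^m$ with $m\le C_1 N$ such that
$$
\|u\|_2 \le C_2' \left(\frac{1}{m}\sum_{j=1}^m |u(\xi^j)|^2\right)^{1/2},\quad \forall u\in X_N.
$$
This is exactly assumption \textbf{A1} from Section~\ref{B} with $p=2$, the weight vector $\bw_m = (1/m,\dots,1/m)$, and discretization constant $D=C_2'$.

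Next I would verify assumption \textbf{A2}: with $w_j = 1/m$ we have $\sum_{j=1}^m w_j = 1$, so the total-weight constant can be taken as $W=1$. Both hypotheses of Theorem~\ref{BT1} are therefore satisfied. Applying Theorem~\ref{BT1} with $p=2$, $D = C_2'$, $W=1$, and the algorithm $\ell 2\bw_m(\xi,X_N)$ gives
$$
\|f-\ell 2\bw_m(\xi,X_N)(f)\|_2 \le (2C_2' W^{1/2}+1)\,d(f,X_N)_\infty = (2C_2'+1)\,d(f,X_N)_\infty
$$
for every $f\in\cC(\Omega)$. Setting $C_2 := 2C_2'+1$ yields the claimed inequality, and the number of points is $m\le C_1 N$ with $C_1$ the absolute constant from Proposition~\ref{AP6}.

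There is no real obstacle here: the statement is essentially a packaging of the two previously established results, and the only thing to check is that the weights $1/m$ produced by Proposition~\ref{AP6} are admissible for Theorem~\ref{BT1}, which is immediate since their sum equals $1$. The use of Theorem~\ref{BT1} rather than Theorem~\ref{BT1a} is deliberate, since we are measuring the error in $L_2$ and no Nikol'skii assumption on $X_N$ is available or needed.
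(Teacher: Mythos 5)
Your proposal is correct and follows exactly the route the paper intends: the paper derives Theorem~\ref{BT4} precisely by combining Proposition~\ref{AP6} with $p=2$ (giving the equal-weight LDI(2) with $m\le C_1N$ points, i.e.\ assumption \textbf{A1} with $\bw_m=(1/m,\dots,1/m)$ and \textbf{A2} with $W=1$) and Theorem~\ref{BT1} with $p=2$. Your verification of the weight condition and the resulting constant $2C_2'+1$ match the paper's argument.
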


\paragraph{A comment on the sampling recovery in the uniform norm.}
It is well known that results on sampling discretization in the $L_2$-norm imply some results on sampling discretization in the $L_\infty$-norm. We will illustrate this phenomenon on some known examples. 
Probably, the first example of this type is the multivariate trigonometric polynomials. 
By $Q$ we denote a finite subset of $\Z^d$, and $|Q|$ stands for the number of elements in $Q$. Let
$$
\Tr(Q):= \left\{f: f=\sum_{\bk\in Q}c_\bk e^{i(\bk,\bx)},\  \  c_{\bk}\in\mathbb{C}\right\}.
$$
The following theorem was proved in \cite{VT158}. 

\begin{oldTheorem}[{\cite[Theorem~1.1]{VT158}}] \label{TrD}There are three positive absolute constants $C_1$, $C_2$, and $C_3$ with the following properties: For any $d\in \N$ and any $Q\subset \Z^d$   there exists a set of  $m \le C_1|Q| $ points $\xi^j\in \T^d$, $j=1,\dots,m$ such that for any $f\in \Tr(Q)$
    we have
    $$
    C_2\|f\|_2^2 \le \frac{1}{m}\sum_{j=1}^m |f(\xi^j)|^2 \le C_3\|f\|_2^2.
    $$
\end{oldTheorem}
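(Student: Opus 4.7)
The plan is to recast the Marcinkiewicz inequality as a spectral-approximation statement for sums of rank-one projections and to solve it via the Marcus--Spielman--Srivastava (MSS) solution of the Kadison--Singer problem, applied to an initial exact cubature on a uniform grid. Set $N = |Q|$ and identify $\Tr(Q)$ with $\bbC^N$ through the orthonormal basis $\{e^{i(\bk,\cdot)}\}_{\bk \in Q}$; to each $\bx \in \T^d$ associate the evaluation vector $E(\bx) = (e^{i(\bk,\bx)})_{\bk \in Q} \in \bbC^N$. For $f \in \Tr(Q)$ with coefficient vector $\mathbf{c}$ one has $|f(\bx)|^2 = \mathbf{c}^* E(\bx) E(\bx)^* \mathbf{c}$ and $\|f\|_2^2 = \|\mathbf{c}\|^2$, so the target inequality is equivalent to the operator sandwich $C_2 I \preceq \frac{1}{m}\sum_{j=1}^m E(\xi^j) E(\xi^j)^* \preceq C_3 I$.

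First I would pick an integer $M$ with $Q \subset [-M,M]^d$ and take the uniform tensor-product grid $T \subset \T^d$ of cardinality $K = (2M+1)^d$. Classical DFT orthogonality yields the exact equal-weight identity $I = \frac{1}{K}\sum_{\bx \in T} E(\bx) E(\bx)^*$, exhibiting $\{E(\bx)/\sqrt{K}\}_{\bx \in T}$ as an isotropic tight frame with individual norms $\|E(\bx)/\sqrt K\|^2 = N/K$ that can be driven arbitrarily small by enlarging $M$. Next I would invoke MSS in its paving form (Weaver $KS_r$, obtained by iterating the $r=2$ case): for a suitable integer $r \asymp K/N$, the grid splits into $r$ pieces $T_1 \sqcup \cdots \sqcup T_r$ such that for every $\ell$ the subsum $\sum_{\bx \in T_\ell} E(\bx) E(\bx)^*/K$ is equivalent to $\frac{1}{r} I$ within absolute multiplicative constants. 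The trace identity $\mathrm{tr}(E(\bx) E(\bx)^*) = N$ forces every $|T_\ell|$ to stay within a constant factor of $K/r \asymp N$. Keeping one piece $T_\ell$, relabeling its points as $\xi^1, \ldots, \xi^m$ with $m = |T_\ell|$, and rescaling the MSS sandwich by the factor $K/m \asymp 1$ delivers the desired equal-weight inequality with absolute constants.

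The main obstacle will be controlling the accumulated spectral error in the iteration that promotes the $r=2$ MSS theorem to the $KS_r$ paving: each split contributes an error of order $\sqrt{\delta}$ relative to the current spectral scale, with $\delta = N/K$, and one must calibrate $r$ against $K/N$ so that after $\lceil \log_2 r \rceil$ successive splits the total relative error stays below a fixed threshold (say $1/2$). The saving grace is that $K$, unlike $N$, can be taken as large as one wishes, driving $\delta = N/K$ to zero and allowing the iteration to close with absolute constants; the same smallness simultaneously guarantees the cardinality balance through the trace identity. With these constants in hand, the remainder of the argument --- cubature, iterative splitting, pigeonhole on the sizes, and rescaling --- is routine. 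As an alternative route, one could replace the MSS step by a probabilistic argument based on matrix concentration, yielding $m \lesssim N \log N$, and then apply a Batson--Spielman--Srivastava type sparsification to reach $m \lesssim N$; however, the MSS path is conceptually cleanest and produces the linear bound directly.
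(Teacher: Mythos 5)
This survey does not actually prove the statement: it is imported from \cite{VT158} (Theorem~1.1 there), and the original proof runs along essentially the route you propose — identify $\Tr(Q)$ with $\mathbb{C}^N$, use the exact equal-weight cubature on a refined uniform grid to obtain a tight frame of evaluation vectors of equal small squared norm $\delta=N/K$, extract a well-conditioned subcollection via the Marcus--Spielman--Srivastava theorem (iterated Weaver $KS_2$, as in the Nitzan--Olevskii--Ulanovskii selection argument), and control the cardinality of the selected piece by the trace identity. So your plan is, in substance, the known proof, and its skeleton is correct.

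One piece of bookkeeping in your ``main obstacle'' paragraph should be corrected. When you iterate the two-part MSS splitting, at step $k$ the current frame operator is about $2^{-k}I$ while the vectors still have squared norm $\delta$; after renormalizing by the frame operator their squared norms are about $\delta 2^{k}$, so the relative error of the $k$-th split is of order $\sqrt{\delta 2^{k}}$, not $\sqrt{\delta}$. Consequently the accumulated relative error is dominated by the last split and is of order $\sqrt{\delta r}$, which does \emph{not} tend to zero as $K\to\infty$ when $r\asymp K/N$; enlarging $K$ is not what closes the iteration. What closes it is precisely your calibration clause: take $r\le cK/N$ with a sufficiently small absolute constant $c$ (equivalently, stop the halving at spectral scale $\asymp\delta$), so that $\sqrt{\delta r}\le\sqrt{c}$ stays below the threshold while still giving $m\asymp K/r\asymp N$. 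This is a fixable slip in the heuristics, not a gap in the plan. A minor aside: in your alternative route, Batson--Spielman--Srivastava sparsification produces weights, so by itself it yields a weighted rather than the required equal-weight discretization.
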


In \cite{DPTT} it was shown how Theorem~\ref{TrD} implies a result on sampling discretization of the $L_\infty$-norm. Namely, the following theorem was proved.

\begin{oldTheorem}[{\cite[Theorem~2.9]{DPTT}}] \label{TrDi} Let two positive absolute constants $C_1$ and $C_2$ be from Theorem~\ref{TrD}. Then for any $d\in \N$ and any $Q\subset \Z^d$   there exists a set $\xi$ of  $m \le C_1|Q| $ points $\xi^j\in \T^d$, $j=1,\dots,m$, such that for any $f\in \Tr(Q)$
we have
$$
\|f\|_\infty \le C_2^{-1/2}|Q|^{1/2}\left(\frac{1}{m}\sum_{j=1}^m |f(\xi^j)|^2\right)^{1/2}\le  C_2^{-1/2}|Q|^{1/2}\max_{1\le j\le m}|f(\xi^j)|.
$$
 \end{oldTheorem}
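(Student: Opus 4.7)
The plan is to combine two ingredients. First, the Nikol'skii-type inequality for trigonometric polynomials supported on $Q$: for any $f=\sum_{\bk\in Q}c_\bk e^{i(\bk,\bx)}\in\Tr(Q)$, the Cauchy--Schwarz inequality applied pointwise gives
$$
|f(\bx)| \le \Bigl(\sum_{\bk\in Q}|c_\bk|^2\Bigr)^{1/2}|Q|^{1/2} = |Q|^{1/2}\|f\|_2,
$$
so $\Tr(Q)\in\NI(2,\infty,|Q|^{1/2})$. Second, Theorem \ref{TrD} gives a set $\xi=\{\xi^j\}_{j=1}^m$ of size $m\le C_1|Q|$ for which the lower (LDI) part
$$
C_2\|f\|_2^2 \le \frac{1}{m}\sum_{j=1}^m|f(\xi^j)|^2
$$
holds on $\Tr(Q)$.

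Taking the set $\xi$ from Theorem \ref{TrD}, I would just chain these two facts: for $f\in\Tr(Q)$,
$$
\|f\|_\infty \le |Q|^{1/2}\|f\|_2 \le C_2^{-1/2}|Q|^{1/2}\Bigl(\frac{1}{m}\sum_{j=1}^m|f(\xi^j)|^2\Bigr)^{1/2},
$$
which is the first of the claimed inequalities. The second is immediate from bounding the discrete quadratic mean by the maximum,
$$
\Bigl(\frac{1}{m}\sum_{j=1}^m|f(\xi^j)|^2\Bigr)^{1/2} \le \max_{1\le j\le m}|f(\xi^j)|.
$$

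There is essentially no obstacle: both ingredients are already available (the Nikol'skii bound is a one-line Cauchy--Schwarz, and the $L_2$-discretization is exactly Theorem \ref{TrD}). The only modest point to note is that the left inequality of Theorem \ref{TrD} is a WLDI with weights $1/m$, which gives an unweighted LDI directly, so no extra work via Proposition \ref{AP4} is needed here.
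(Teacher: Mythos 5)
Your proof is correct and is essentially identical to the paper's: the paper also takes the point set from Theorem~\ref{TrD} and chains the Nikol'skii inequality $\|f\|_\infty \le |Q|^{1/2}\|f\|_2$ with the left discretization inequality, then bounds the discrete quadratic mean by the maximum. No discrepancies to note.
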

\begin{proof} For the reader's convenience we present the one line proof from \cite{DPTT} here. We use the set of points provided by Theorem~\ref{TrD}. Then $m\le C_1|Q|$ and for any $f\in\Tr(Q)$ we have
\begin{align*}
\|f\|_\infty & \le |Q|^{1/2}\|f\|_2 \le |Q|^{1/2} C_2^{-1/2} \left(\frac{1}{m}\sum_{j=1}^m |f(\xi^j)|^2\right)^{1/2} \\ & \le |Q|^{1/2} C_2^{-1/2} \max_{1\le j\le m}|f(\xi^j)|.
\end{align*}
\end{proof}

We point out that in the above proof in addition to the sampling discretization result Theorem~\ref{TrD} the Nikol'skii inequality $\|f\|_\infty \le  |Q|^{1/2}\|f\|_2$ has been used. 

The following two sampling discretization results show that the Nikol'skii inequality guarantees good discretization inequalities for general subspaces.

\begin{oldTheorem}[{\cite[Theorem~1.1]{LimT}}]\label{LimTT} Let  $\Omega\subset \R^d$ be a   nonempty set with  the probability measure $\mu$. Assume that a subspace $X_N \in NI(2,\infty, tN^{1/2})$.
Then there is an absolute  constant $C_1$ such that there exists a set $\{\xi^j\}_{j=1}^m\subset \Omega$ of $m \le C_1 t^2 N$ points with the property:
 For any $f\in X_N$  we have  
\begin{equation*}
C_2 \|f\|_2^2 \le \frac{1}{m}\sum_{j=1}^m |f(\xi^j)|^2 \le C_3 t^2\|f\|_2^2, 
\end{equation*}
where $C_2$ and $C_3$ are absolute positive constants. 
\end{oldTheorem}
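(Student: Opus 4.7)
The plan is to combine a starting weighted $L_2$-discretization on $X_N$ with the Marcus--Spielman--Srivastava breakthrough \cite{MSS} on Weaver's $KS_2$ conjecture, as Section~\ref{M} already hints. Fix an orthonormal basis $\{u_i\}_{i=1}^N$ of $X_N$ in $L_2(\Omega,\mu)$, set $U(\omega):=(u_1(\omega),\ldots,u_N(\omega))^T\in\mathbb{C}^N$, and note that~\eqref{RI3} combined with $X_N\in\NI(2,\infty,tN^{1/2})$ gives the pointwise bound $\|U(\omega)\|_{\ell_2^N}^2=\sum_i|u_i(\omega)|^2\le t^2N$ for every $\omega\in\Omega$. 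Thus the rank-one pieces $U(\omega)U(\omega)^*$ are uniformly trace-bounded, which is exactly the ``isotropy + bounded norm'' setup that lets MSS apply.

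First I would apply Theorem~6.4 of \cite{DPSTT2} to obtain $M\le C_0N$ points $\eta^j\in\Omega$ and nonnegative weights $\mu_j$ with $c_0 I_N\preceq\sum_j\mu_j\,U(\eta^j)U(\eta^j)^*\preceq C_0 I_N$. Next, for a small absolute $\delta>0$ to be fixed, I would split each $\eta^j$ into $k_j:=\lceil\mu_j t^2N/\delta\rceil$ equally weighted copies of weight $\mu_j/k_j$. This preserves the weighted frame operator, increases the list length from $M$ to $M'=O(t^2N)$ (after rescaling $\sum_j\mu_j$ to be of order one), and makes the rescaled vectors $v_\ell:=\sqrt{\mu_j/k_j}\,U(\eta^j)$ uniformly small: $\|v_\ell\|^2\le\delta$.

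Then, after whitening ($\tilde v_\ell:=V^{-1/2}v_\ell$ with $V:=\sum_\ell v_\ell v_\ell^*$, whence $\sum_\ell\tilde v_\ell\tilde v_\ell^*=I$ and $\|\tilde v_\ell\|^2\le O(\delta)$), I would invoke Weaver's $KS_2$ theorem from \cite{MSS}: the index set splits as $J\sqcup J^c$ with both partial frame operators having operator norm at most $(1/\sqrt{2}+O(\sqrt{\delta}))^2<1-\eta$ for an absolute $\eta>0$, once $\delta$ is small enough. Taking $J$ to be the smaller-size half gives $|J|\le M'/2$, and the identity $I=\sum_{\ell\in J}\tilde v_\ell\tilde v_\ell^*+\sum_{\ell\in J^c}\tilde v_\ell\tilde v_\ell^*$ forces each half to be $\succeq\eta I$ as well. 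Iterating this partition step $O(1)$ times, rewhitening between iterations, drives the size down to $m\asymp t^2N$ while preserving two-sided frame bounds. Unpacking $|f(\omega)|^2=|\langle\bx,U(\omega)\rangle|^2$ for $f=\sum_i x_i u_i$ translates these frame bounds into the required inequality $C_2\|f\|_2^2\le m^{-1}\sum_j|f(\xi^j)|^2\le C_3 t^2\|f\|_2^2$; the extra factor $t^2$ on the right arises when one rescales from the per-copy weight $\mu_j/k_j\asymp\delta/(t^2N)$ to the uniform weight $1/m$.

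The hard part will be the iterated use of MSS. A single Weaver $KS_2$ partition provides matching upper bounds on both halves but only a \emph{joint} lower bound via $A+B=I$, so one must iterate to reduce the subset size while still controlling both spectral sides, and at each iteration the effective bound on $\|\tilde v_\ell\|^2$ grows by the reciprocal of the smallest surviving eigenvalue. Careful book-keeping---choosing $\delta$ small enough from the start so that the partition bound $(1/\sqrt{2}+\sqrt{\delta_i})^2$ stays below $1$ at every iteration---is where the technical work of \cite{LimT} lies.
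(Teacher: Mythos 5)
Your core tool is the right one: the proof of this theorem in \cite{LimT} is indeed based on the Marcus--Spielman--Srivastava theorem \cite{MSS}, iterated with renormalization exactly in the spirit you describe. But your particular reduction has a genuine gap at the very last step, the return to \emph{equal} weights $1/m$. You start from the weighted discretization of \cite[Theorem~6.4]{DPSTT2} and split the point $\eta^j$ into $k_j=\lceil \mu_j t^2N/\delta\rceil$ copies, and your final rescaling rests on the claim that every per-copy weight satisfies $\mu_j/k_j\asymp \delta/(t^2N)$. The upper bound $\mu_j/k_j\le \delta/(t^2N)$ is true, but the matching lower bound fails for every point with $\mu_j<\delta/(t^2N)$: such a point gets $k_j=1$ and keeps its (possibly arbitrarily small) weight $\mu_j$. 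For those points the passage from the weighted inequality on the selected subset to the uniform-weight upper bound $\frac1m\sum_j|f(\xi^j)|^2\le C_3t^2\|f\|_2^2$ breaks down, since that passage needs a lower bound on the selected weights; and these points cannot simply be discarded or inflated, because there may be $\sim N$ of them and their combined frame operator can have norm of order $N\delta$, destroying the lower frame bound. Two smaller issues: you cannot ``rescale $\sum_j\mu_j$ to be of order one'' at will (the weights are tied to the discretization inequality), although this is repairable via Remark~\ref{rem_marcink_sum_weights} or the trace identity $\sum_j\mu_j\sum_i|u_i(\eta^j)|^2\le C_0N$; and once $\delta$ is an absolute constant your split system already has $O(t^2N)$ elements, so the MSS halving in your scheme is not what ``drives the size down to $m\asymp t^2N$'' --- the missing work is precisely the weight equalization, which halving does not address.

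The argument of \cite{LimT} avoids this by running the MSS iteration with equal weights from the start: one first reduces to a finite equal-weight two-sided discretization with a possibly huge number $m_0$ of points (in the spirit of Proposition~\ref{general_discretization}), so that the vectors $U(\xi^j)/\sqrt{m_0}$ have uniformly small norms $\le t^2N/m_0$ by the Nikol'skii assumption; then one only ever \emph{selects} subsets (taking the smaller half at each Weaver partition and renormalizing), never reweights, so the final inequality is automatically of equal-weight type. In that route the factor $t^2$ on the right-hand side does not come from weight comparability at all: it comes from a trace argument --- the selected set $S$ has $\sum_{j\in S}\|U(\xi^j)\|^2/m_0$ bounded below by the lower frame bound times $N$, while each summand is at most $t^2N/m_0$, so $|S|$ cannot be too small, and renormalizing from $1/m_0$ to $1/|S|$ costs at most a factor $Ct^2$ on the upper side. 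If you replace your weighted starting point by this equal-weight one, your whitening-plus-partition bookkeeping goes through and reproduces the theorem; as written, the conversion back to the uniform weight $1/m$ is not justified.
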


\begin{oldTheorem}[{\cite[Theorem~1.2]{LimT}}]\label{LimTTw}
    If $X_N$ is an $N$-dimensional subspace of the complex $L_2(\Omega,\mu)$,
    then there exist three absolute positive constants $C_1'$, $c_0'$, $C_0'$,
    a set of $m\leq   C_1'N$ points $\xi^1,\ldots, \xi^m\in\Omega$, and a set of
    nonnegative  weights $\lambda_j$, $j=1,\ldots, m$,  such that
    $$
    c_0'\|f\|_2^2\leq  \sum_{j=1}^m \lambda_j |f(\xi^j)|^2 \leq  C_0'
    \|f\|_2^2,\quad   \forall f\in X_N.
    $$
\end{oldTheorem}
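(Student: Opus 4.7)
The strategy I would follow is a reduction to Theorem~\ref{LimTT} via the Christoffel-function renormalization, which converts an arbitrary $N$-dimensional subspace into one with an optimal Nikol'skii constant at the price of introducing weights.

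Fix an orthonormal basis $\{u_i\}_{i=1}^N$ of $X_N$ in $L_2(\Omega,\mu)$ and put
\[
K(\omega):=\sum_{i=1}^N|u_i(\omega)|^2.
\]
Observe that $\int_\Omega K\,d\mu=N$ and, by (\ref{RI3}), $K$ is independent of the chosen orthonormal basis. Let $\Omega':=\{\omega\in\Omega:K(\omega)>0\}$ and define a new probability measure $\nu$ on $\Omega'$ by $d\nu:=\frac{K}{N}\,d\mu$. Define on $\Omega'$ the functions $v_i(\omega):=u_i(\omega)\sqrt{N/K(\omega)}$ and let $Y_N:=\operatorname{span}\{v_1,\dots,v_N\}$. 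A direct computation shows $\int v_i\overline{v_j}\,d\nu=\int u_i\overline{u_j}\,d\mu=\delta_{ij}$, so $\{v_i\}$ is an orthonormal basis of $Y_N\subset L_2(\Omega',\nu)$, and by construction
\[
\sum_{i=1}^N|v_i(\omega)|^2=N \quad\text{for all }\omega\in\Omega'.
\]
By (\ref{RI3}) this forces $Y_N\in\NI(2,\infty,N^{1/2})$, i.e.\ the hypothesis of Theorem~\ref{LimTT} with $t=1$.

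Applying Theorem~\ref{LimTT} to $Y_N\subset L_2(\Omega',\nu)$, we obtain absolute constants $C_1,C_2,C_3$ and points $\xi^1,\dots,\xi^m\in\Omega'$ with $m\le C_1 N$ such that
\[
C_2\|g\|_{L_2(\nu)}^2\le\frac1m\sum_{j=1}^m|g(\xi^j)|^2\le C_3\|g\|_{L_2(\nu)}^2,\quad\forall g\in Y_N.
\]
Now, given any $f=\sum_{i=1}^N c_iu_i\in X_N$, set $g:=\sum_{i=1}^N c_iv_i\in Y_N$. Since $\{u_i\}$ and $\{v_i\}$ are orthonormal in their respective $L_2$-spaces,
\[
\|f\|_{L_2(\mu)}^2=\sum_i|c_i|^2=\|g\|_{L_2(\nu)}^2,
\]
and for each $j$ we have $|g(\xi^j)|^2=|f(\xi^j)|^2\cdot N/K(\xi^j)$. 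Substituting and setting
\[
\lambda_j:=\frac{N}{m\,K(\xi^j)}\ge 0
\]
yields exactly the two-sided weighted inequality of Theorem~\ref{LimTTw}.

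The only nontrivial step is the application of Theorem~\ref{LimTT}, whose proof relies on the deep Marcus--Spielman--Srivastava machinery from \cite{MSS}; the rest of the argument is just bookkeeping. One minor technical point to verify is that sample points lie in $\Omega'$ (so that the weights $\lambda_j$ are finite), but this is automatic because $\nu$ is supported on $\Omega'$, and the construction in Theorem~\ref{LimTT} produces points in the support of the underlying probability measure.
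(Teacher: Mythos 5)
Your proof is correct, and it is essentially the argument behind the cited result: the survey itself does not prove Theorem~\ref{LimTTw} (it is quoted from \cite{LimT}), and the standard derivation there and in the real-case predecessor \cite{DPSTT2} is exactly your change-of-density reduction $d\nu=(K/N)\,d\mu$, renormalized basis $v_i=u_i\sqrt{N/K}$, application of the conditional result (Theorem~\ref{LimTT} with $t=1$), and translation back via the weights $\lambda_j=N/(mK(\xi^j))$. The only cosmetic mismatch is that Theorem~\ref{LimTT} as stated here assumes $\Omega\subset\R^d$ while Theorem~\ref{LimTTw} concerns a general probability space, but nothing in your reduction (or in the underlying proof) uses that structure, so this is immaterial.
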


Note that Theorem~\ref{LimTTw} is a generalization to the complex case of an earlier result from \cite{DPSTT2} established for the real case. 
In \cite{KKT} it was shown how Theorem~\ref{LimTTw} implies a result on sampling discretization of the $L_\infty$-norm.
Namely, the following theorem was proved there.

\begin{oldTheorem}[{\cite[Theorem~6.6]{KKT}}]\label{BP1}
    There exist two absolute constants $C_1$ and $C_2$ such that for any subspace
$X_N \in NI(2,\infty, M)$  there exists a set of $m\leq C_1N$ points
$\xi^1,\ldots, \xi^m\in\Omega$ with the property: for any $f\in X_N$ we have
\begin{equation*}
\|f\|_\infty \le C_2 M \max_{1\le j\le m} |f(\xi^j)|.
\end{equation*}
\end{oldTheorem}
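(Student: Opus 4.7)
The plan is to derive this $L_\infty$-discretization from the weighted $L_2$-discretization of Theorem~\ref{LimTTw}, exactly in the spirit of the proof of Theorem~\ref{TrDi}, using the hypothesized Nikol'skii inequality $\|f\|_\infty \le M\|f\|_2$ to pass from the integral $L_2$-norm to the uniform norm on the left-hand side. The one subtlety is that Theorem~\ref{LimTTw} provides unspecified nonnegative weights, so one first has to arrange that their sum is under control; this is achieved by the standard trick of Remark~\ref{rem_marcink_sum_weights}, namely enlarging the subspace by constants.

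First I would form $X_N' := X_N + \operatorname{span}\{\mathbf{1}\}$, which has dimension at most $N+1$, and apply Theorem~\ref{LimTTw} to $X_N'$. This yields absolute constants $C_1', c_0', C_0' > 0$, a set of $m \le C_1'(N+1) \le 2C_1'N$ points $\xi^1,\ldots,\xi^m \in \Omega$, and nonnegative weights $\lambda_1,\ldots,\lambda_m$ such that
$$c_0'\|g\|_2^2 \le \sum_{j=1}^m \lambda_j|g(\xi^j)|^2 \le C_0'\|g\|_2^2, \qquad \forall g \in X_N'.$$
Testing the right-hand inequality against the constant function $g \equiv 1$ immediately gives $\sum_{j=1}^m \lambda_j \le C_0'$, which is precisely the weight-sum bound isolated in Remark~\ref{rem_marcink_sum_weights}.

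For the main step, I would take an arbitrary $f \in X_N \subset X_N'$ and chain three elementary estimates:
$$\|f\|_\infty^2 \,\le\, M^2\|f\|_2^2 \,\le\, \frac{M^2}{c_0'}\sum_{j=1}^m \lambda_j |f(\xi^j)|^2 \,\le\, \frac{M^2}{c_0'}\Bigl(\sum_{j=1}^m \lambda_j\Bigr)\max_{1\le j\le m}|f(\xi^j)|^2 \,\le\, \frac{M^2 C_0'}{c_0'}\max_{1\le j\le m}|f(\xi^j)|^2.$$
The first inequality is the assumed Nikol'skii inequality applied on $X_N$ itself (so the constant $M$ is not inflated by the enlargement), the second is the lower bound of the weighted discretization (inherited by $X_N \subset X_N'$), and the third is the trivial fact that a weighted sum is bounded by the total weight times the pointwise maximum. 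Taking square roots and setting $C_1 := 2C_1'$ and $C_2 := \sqrt{C_0'/c_0'}$ yields the claim with $m \le C_1 N$ points.

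There is no genuinely hard step here: the argument is a direct parallel of the proof of Theorem~\ref{TrDi} displayed earlier in the paper, with Theorem~\ref{LimTTw} replacing Theorem~\ref{TrD} so as to handle arbitrary subspaces rather than only trigonometric ones. The only point that deserves care is the passage from $X_N$ to $X_N + \operatorname{span}\{\mathbf{1}\}$; without this enlargement Theorem~\ref{LimTTw} delivers no upper bound on $\sum_j \lambda_j$, and the $L_2$-norm cannot be compared to $\max_j|f(\xi^j)|$.
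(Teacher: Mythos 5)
Your proof is correct and follows essentially the route the paper indicates for this cited result: apply the weighted $L_2$-discretization of Theorem~\ref{LimTTw} (after adjoining constants, exactly the weight-sum control of Remark~\ref{rem_marcink_sum_weights}, which gives $\sum_j\lambda_j\le C_0'$) and then chain it with the Nikol'skii inequality $\|f\|_\infty\le M\|f\|_2$, in the same one-line spirit as the proof of Theorem~\ref{TrDi}. All steps check out, including the bound $m\le C_1'(N+1)\le 2C_1'N$ and the constants $C_2=(C_0'/c_0')^{1/2}$.
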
 

Note that in the special case when $M=tN^{1/2}$ the proof of Theorem~\ref{BP1}
from \cite{KKT} shows that Theorem~\ref{LimTT} implies the following result on
simultaneous sampling discretization of the $L_\infty$
and $L_2$ norms.

\begin{Theorem}\label{BP1a} Let  $\Omega\subset \R^d$ be a   nonempty set with
    the probability measure $\mu$. Assume that $X_N \in \NI(2,\infty, tN^{1/2})$.
    Then there are three positive absolute  constants $C_1$, $C_2$, $C_3$ (they
    are from Theorem~\ref{LimTT}) such that there exists a set
    $\{\xi^j\}_{j=1}^m\subset \Omega$ of cardinality $m \le C_1 t^2 N$ with the
    property:
     for any $f\in X_N$  we have  
     \be\label{conddisc}
     \|f\|_\infty \le C_2^{-1/2}tN^{1/2}\left(\frac{1}{m}\sum_{j=1}^m
     |f(\xi^j)|^2\right)^{1/2} \le C_2^{-1/2}tN^{1/2} \max_{1\le j\le m}
     |f(\xi^j)|.
     \ee
     Moreover, for any $f\in X_N$  we have
     \begin{equation*}
     C_2 \|f\|_{L_2(\Omega,\mu)}^2 \le \frac{1}{m}\sum_{j=1}^m |f(\xi^j)|^2 \le
     C_3 t^2\|f\|_{L_2(\Omega,\mu)}^2. 
    \end{equation*}
\end{Theorem}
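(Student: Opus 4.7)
The plan is to apply Theorem~\ref{LimTT} directly to obtain the sampling set, and then combine its lower $L_2$-discretization bound with the hypothesized Nikol'skii inequality to produce the uniform-norm estimate. This mirrors exactly the one-line argument used to derive Theorem~\ref{TrDi} from Theorem~\ref{TrD}.

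First, since by assumption $X_N \in \NI(2,\infty, tN^{1/2})$, Theorem~\ref{LimTT} applies and yields absolute constants $C_1, C_2, C_3$ and a point set $\{\xi^j\}_{j=1}^m \subset \Omega$ with $m \le C_1 t^2 N$ such that the two-sided $L_2$-discretization
\begin{equation*}
C_2 \|f\|_{L_2(\Omega,\mu)}^2 \le \frac{1}{m}\sum_{j=1}^m |f(\xi^j)|^2 \le C_3 t^2 \|f\|_{L_2(\Omega,\mu)}^2
\end{equation*}
holds for every $f \in X_N$. This immediately verifies the second displayed conclusion of the theorem.

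Next, to establish~\eqref{conddisc}, I would chain the Nikol'skii inequality with the lower bound of the discretization just obtained. Namely, for any $f \in X_N$,
\begin{equation*}
\|f\|_\infty \le t N^{1/2} \|f\|_2 \le t N^{1/2} C_2^{-1/2} \left(\frac{1}{m}\sum_{j=1}^m |f(\xi^j)|^2\right)^{1/2},
\end{equation*}
where the first inequality uses the assumption $X_N \in \NI(2,\infty, tN^{1/2})$ and the second uses the lower $L_2$-discretization bound from Theorem~\ref{LimTT}. The final step of~\eqref{conddisc} is the trivial comparison
\begin{equation*}
\left(\frac{1}{m}\sum_{j=1}^m |f(\xi^j)|^2\right)^{1/2} \le \max_{1\le j\le m} |f(\xi^j)|,
\end{equation*}
valid for any sample vector.

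There is essentially no obstacle here: the whole content is packaged in Theorem~\ref{LimTT}, and the uniform-norm estimate follows by inserting the Nikol'skii inequality upstream of the lower $L_2$-discretization. The only point worth emphasizing is that it is crucial that the \emph{same} set of $m \le C_1 t^2 N$ points delivered by Theorem~\ref{LimTT} serves both purposes simultaneously; no additional selection or refinement of points is required.
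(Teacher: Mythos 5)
Your proposal is correct and follows essentially the same route as the paper, which derives Theorem~\ref{BP1a} from Theorem~\ref{LimTT} exactly by chaining the Nikol'skii inequality with the lower $L_2$-discretization bound on the same point set (the argument mirroring the one-line proof of Theorem~\ref{TrDi}).
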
 

The above Theorems~\ref{BP1} and~\ref{BP1a} are conditional results -- they hold under 
the Nikol'skii inequality assumption. Recently, it was understood how those
conditional results can be converted into unconditional ones (see \cite{KPUU2}).
This progress was
based on the result of J.~Kiefer and J.~Wolfowitz \cite{KW}, which
 guarantees that for any finite
dimensional subspace $X_N$ of $\cC(\Omega)$ there exists a probability measure
$\mu$ on $\Omega$ such that for all $f\in X_N$ we have
\be\label{KW}
\|f\|_\infty \le N^{1/2}\|f\|_{L_2(\Omega,\mu)}.
\ee
In other words, for any subspace $X_N$ of $\cC(\Omega)$ we have $X_N \in
NI(2,\infty, N^{1/2})$ with some probability measure $\mu$.

 It was observed in \cite{KPUU2} (see Remark 6 there) that the "Kiefer-Wolfowitz result presents itself as a missing piece" in known results proved under the Nikol'skii inequality assumption  $X_N \in
NI(2,\infty, CN^{1/2})$.    The following result was proved in \cite{KPUU2}. 

\begin{oldTheorem}[{\cite[Theorem 2]{KPUU2}}]\label{BP1b}
There is an absolute  constant $C$   such that for any subspace $X_N$ of
    $\cC(\Omega)$ there exists a set $\{\xi^j\}_{j=1}^m\subset \Omega$ of $m \le
    2 N$ points with the following property:
 for any $f\in X_N$  we have  
\be\label{conddisc1}
\|f\|_\infty \le CN^{1/2}\left(\frac{1}{m}\sum_{j=1}^m |f(\xi^j)|^2\right)^{1/2}.
\ee
\end{oldTheorem}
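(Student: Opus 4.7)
\medskip
\noindent\textbf{Proof plan.} The natural strategy is to decouple the two conclusions hidden in the target inequality: first turn $\|f\|_\infty$ into an $L_2$-integral against a cleverly chosen measure, then discretize that integral on $X_N$ by a near-optimal number of points. The $C N^{1/2}$ factor should appear entirely from the first step (Kiefer--Wolfowitz), while the second step should not introduce any dimensional factor.

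\smallskip
\noindent\emph{Step 1 (Kiefer--Wolfowitz).} Applied to the finite-dimensional subspace $X_N\subset\cC(\Omega)$, the result \eqref{KW} produces a probability measure $\mu$ on $\Omega$ such that
\[
\|f\|_\infty \le N^{1/2}\|f\|_{L_2(\Omega,\mu)},\qquad \forall f\in X_N,
\]
i.e.\ $X_N\in \NI(2,\infty,N^{1/2})$ with respect to this measure. The point of this step is that, on $X_N$, the role of the sup-norm is now played by the $L_2$-norm against $\mu$, up to the factor $N^{1/2}$.

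\smallskip
\noindent\emph{Step 2 (LDI$(2,2)$ with a small constant on the number of points).} Using the probability space $(\Omega,\mu)$ from Step~1, I invoke a sharp left discretization inequality in $L_2$: there exist a set of points $\{\xi^j\}_{j=1}^m\subset\Omega$ with $m\le 2N$ and an absolute constant $C'>0$ such that
\[
\|f\|_{L_2(\Omega,\mu)} \le C'\Bigl(\frac{1}{m}\sum_{j=1}^m |f(\xi^j)|^2\Bigr)^{1/2},\qquad \forall f\in X_N.
\]
This is exactly the LDI$(2,2)$ form of Proposition \ref{AP6} (or equivalently the bound \eqref{ldi_2_2} from \cite{BSU},\cite{KPUU2}), but one must use a version calibrated to give at most $2N$ sample points. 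Such a bound can be extracted from the two-sided weighted discretization in Theorem~\ref{LimTTw} combined with the weight-removing device of Proposition \ref{AP4}; one chooses the constants $C_1',c_0',C_0'$ so that the post-processed set has size at most $2N$, at the price of a larger absolute constant $C'$.

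\smallskip
\noindent\emph{Step 3 (Chaining).} For any $f\in X_N$, composing the two inequalities,
\[
\|f\|_\infty \le N^{1/2}\|f\|_{L_2(\Omega,\mu)} \le C'N^{1/2}\Bigl(\frac{1}{m}\sum_{j=1}^m|f(\xi^j)|^2\Bigr)^{1/2},
\]
which is \eqref{conddisc1} with $C=C'$.

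\smallskip
\noindent\emph{Main obstacle.} Step~1 and Step~3 are essentially immediate; the content is in Step~2. The LDI$(2,2)$ for a general $N$-dimensional subspace of $L_2(\Omega,\mu)$ with the \emph{unweighted} discrete norm and with the precise budget $m\le 2N$ is nontrivial: starting from the weighted Marcinkiewicz-type inequality of Theorem~\ref{LimTTw} one gains extra points when converting weights to point multiplicities via Proposition~\ref{AP4}, so one has to be careful that the initial weighted discretization, already for an auxiliary subspace of dimension at most $N+1$ (to absorb the normalization $\sum\lambda_j\le C$), produces a set whose total multiplicity after the conversion stays within $2N$. This is the place where a sharper spectral-sparsification input (in the spirit of the Batson--Spielman--Srivastava / Marcus--Spielman--Srivastava circle used in \cite{LimT},\cite{BSU},\cite{KPUU2}) is needed, as opposed to the crude $C_1N$ guaranteed by generic random-sampling arguments.
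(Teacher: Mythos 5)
Your overall route is exactly the one the paper attributes to \cite{KPUU2} for Theorem~\ref{BP1b}: the Kiefer--Wolfowitz inequality \eqref{KW} produces a measure $\mu$ with $X_N\in\NI(2,\infty,N^{1/2})$, and this is chained with an unweighted LDI$(2,2)$ for $X_N$ in $L_2(\Omega,\mu)$, so that the factor $N^{1/2}$ comes entirely from the first step. Steps 1 and 3 of your argument are correct and coincide with the paper's sketch.

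The genuine gap is in your Step 2, and it matters because the statement asserts the specific budget $m\le 2N$. The paper does not obtain this ingredient from Theorem~\ref{LimTTw} and Proposition~\ref{AP4}; it takes the LDI$(2,2)$ with $m=2N$ points as a known result (see \eqref{ldi_2_2}, proved by a subsampling argument in \cite[Theorem~6.2]{BSU} and \cite[Lemma~11]{KPUU2}). Your proposed derivation cannot reach $2N$: Theorem~\ref{LimTTw}, after the normalization of Remark~\ref{rem_marcink_sum_weights} on an $(N+1)$-dimensional auxiliary space, already uses up to $C_1'(N+1)$ weighted points with fixed absolute constants $C_1',c_0',C_0'$ (they are not free parameters one can ``choose''), and Proposition~\ref{AP4} converts weights into point multiplicities at the cost of a further factor $(C^2+1)$, so the output is $m\le CN$ with an absolute constant well above $2$. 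You correctly identify this as the main obstacle and point to sparsification-type inputs, but flagging the obstacle does not close it: as written, your argument proves \eqref{conddisc1} only with $m\le CN$ for some unspecified absolute constant (which is enough for consequences like \eqref{LDI_infty}, but not for the theorem as stated). To get $m\le 2N$ you must invoke the sharper subsampling result of \cite[Lemma~11]{KPUU2} or \cite[Theorem~6.2]{BSU} directly, exactly as the paper does.
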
 

The authors of~\cite{KPUU2} proved~(\ref{conddisc1})
using the Kiefer--Wolfowitz result~\eqref{KW} (they also proved a novel version of it which is actually WLDI($\infty$, $2$))
and LDI(2,2), see~\eqref{ldi_2_2}, with $m=2N$ points.
They also mention that the LDI($\infty,2$) property \eqref{conddisc1} implies
the following LDI($\infty,\infty$):
\be\label{LDI_infty}
\|f\|_\infty \le C_2 N^{1/2}\max_{1\le j\le m} |f(\xi^j)|, \quad m\leq C_1N.
\ee
Note that
inequality \eqref{LDI_infty} was given in \cite{KoTe} without proof.   
It is a direct corollary of Theorem~\ref{BP1} and the 
Kiefer-Wolfowitz result.

\begin{Remark}\label{BPRem} The Kiefer--Wolfowitz result \eqref{KW} makes the first half  of Theorem~\ref{BP1a} (inequalities \eqref{conddisc}) with $t=1$ unconditional (it holds for any $X_N\subset \cC(\Omega)$) because \eqref{conddisc} does not depend on measure $\mu$.   In addition, Theorem \ref{BP1a} (with $t=1$) provides the following useful discretization of the $L_2$ norm for $\mu$ satisfying $X_N \in NI(2,\infty, N^{1/2})$ (in particular, for $\mu$ from \eqref{KW})
\begin{equation*}
  C_2 \|f\|_{L_2(\Omega,\mu)}^2 \le \frac{1}{m}\sum_{j=1}^m |f(\xi^j)|^2 \le C_3 \|f\|_{L_2(\Omega,\mu)}^2.
\end{equation*}
\end{Remark}

Let us now discuss the sampling recovery in the uniform norm.

Inequality~\eqref{LDI_infty} provide 
the discretization assumption {\bf A1} in the case $p=\infty$ with $D = C_2N^{1/2}$. 
Therefore, Theorem~\ref{BT1} implies that for any subspace $X_N$ of $\cC(\Omega)$ 
we have for any $f\in \cC(\Omega)$
$$
\|f-\ell \infty(\xi)(f)\|_\infty \le (2C_2N^{1/2}  +1)d(f, X_N)_\infty.
$$
In turn, this inequality implies that for any compact subset $\bF \subset \cC(\Omega)$ we have
\be\label{Bnonl}
\ro_{bn}^*(\bF,L_\infty) \le Bn^{1/2}d_n(\bF,L_\infty)
\ee
with positive absolute constants $b$ and $B$. 

We now apply Theorem~\ref{BT1a} in the case $p=2$. We use Theorem~\ref{LimTTw} and
Remark~\ref{rem_marcink_sum_weights} to satisfy the assumptions {\bf A1}, {\bf A2} and use the Kiefer-Wolfowitz
result (\ref{KW}) to satisfy the Nikol'skii inequality assumption. As a result, instead 
of (\ref{Bnonl}) we obtain its linear version
\be\label{Blin}
\ro_{bn}(\bF,L_\infty) \le Bn^{1/2}d_n(\bF,L_\infty).
\ee
Note that the inequality (\ref{Blin}) was formulated in \cite{KPUU}. 
We point out that the inequalities (\ref{Bnonl}) and (\ref{Blin}) are the direct corollaries of 
the known results, which are based on the sampling discretization results of the $L_2$-norm 
from \cite{LimT}, and the Kiefer-Wolfowitz result (\ref{KW}). 

\begin{Remark}\label{DR1}
    It is pointed out in {\cite[Section~6]{KoTe}} that Theorem~\ref{LimTTw} and  Theorem~\ref{BP1} hold for $m\le bN$
    where $b\in (1,2]$ is arbitrary and $c_0'$, $C_0'$, $C_2$ are allowed to depend on $b$. This implies that
 inequalities (\ref{LDI_infty}), (\ref{Bnonl}) and (\ref{Blin}) hold for each $C_1\in (1,2]$ and $b\in (1,2]$ with $C_2$ allowed to depend on $C_1$ and $B$ allowed to depend on $b$. 
\end{Remark}

\section{Universal discretization and nonlinear recovery} 
\label{UD}

In this section we obtain some results directly related to the very recent results from 
\cite{DTM1}--\cite{DTM3}. 

Recall the definition of the concept of $v$-term approximation with respect to a given system $\D=\{g_i\}_{i=1}^\infty$ in a Banach space $X$.
Given an integer $v\in\N$, we denote by $\mathcal{X}_v(\D)$ the collection of
all linear spaces spanned by $g_j$, $j\in J$  with $J\subset \N$ and $|J|=v$,
and we denote by $\Sigma_v(\D)$ the set of all $v$-term approximants with respect
to $\D$:
$$
\Sigma_v(\D):= \bigcup_{V\in\cX_v(\D)} V.
$$

Let 
$$
\sigma_v(f,\D)_X := \inf_{g\in\Sigma_v(\D)}\|f-g\|_X
$$
denote the best $v$-term approximation of  $f\in X$   in the $X$-norm  with respect to $\D$,   and define 
$$
 \sigma_v(\bF,\D)_X := \sup_{f\in\bF} \sigma_v(f,\D)_X,\quad  \sigma_0(\bF,\D)_X := \sup_{f\in\bF} \|f\|_X
 $$
for a function class $\bF\subset X$. 
  
In this paper we only consider finite systems $\mathcal D_N=\{g_i\}_{i=1}^N$.
  We now describe results obtained in \cite{DTM3}. The following two algorithms were studied 
  in \cite{DTM3}.  Denote 
  $$
  \ell p(\xi,L) := \ell p\bw_m(\xi,L),\qquad \bw_m := (1/m,\dots,1/m).
  $$
  
  {\bf Algorithm $ \ell p$.} Given a system $\D_N$ and a set of points
  $\xi:=\{\xi^j\}_{j=1}^m\subset\Omega$ we define the following algorithm:
$$
L(\xi,f) := \text{arg}\min_{L\in \cX_v(\D_N)}\|f-\ell p(\xi,L)(f)\|_p,
$$
\begin{equation*}
  \ell p(\xi,\cX_v(\D_N))(f):= \ell p(\xi,L(\xi,f))(f).
\end{equation*}

{\bf Algorithm $\ell p^s$.} Given a system $\D_N$ and a set of points
$\xi:=\{\xi^j\}_{j=1}^m\subset \Omega$,
we define the following algorithm:
$$
L^s(\xi,f) := \text{arg}\min_{L\in \cX_v(\D_N)}\|S(f-\ell p(\xi,L)(f),\xi)\|_p,
$$
\begin{equation*}
  \ell p^s(\xi,\cX_v(\D_N))(f):= \ell p(\xi,L^s(\xi,f))(f).
\end{equation*}

Here index $s$ stands for {\it sample} to stress that this algorithm only uses
the sample vector $S(f,\xi)$.

Clearly, $\ell p^s(\xi,\cX_v(\D_N))(f)$ is the best $v$-term approximation of $f$ with respect to $\D_N$ in 
the space $L_p(\xi)$ with the norm $\|S(f,\xi)\|_p$.
So,
\be\label{ub15}
\|f-\ell p^s(\xi,\cX_v(\D_N))(f)\|_{L_p(\xi)} = \sigma_v(f,\D_N)_{L_p(\xi)}.
\ee
Note that $\ell p^s(\xi,\cX_v(\D_N))(f)$ may not be unique. 

In this paper we discuss Algorithm $\ell p^s$ and the following version of Algorithm $ \ell p$.

{\bf Algorithm $ \ell (p,\infty)$.} Given a system $\D_N$ and a set of points
$\xi:=\{\xi^j\}_{j=1}^m\subset \Omega$ we define the following algorithm:
$$
L(\xi,f) := \text{arg}\min_{L\in \cX_v(\D_N)}\|f-\ell \infty(\xi,L)(f)\|_p,
$$
\be\label{Alg3}
  \ell (p,\infty)(\xi,\cX_v(\D_N))(f):= \ell \infty(\xi,L(\xi,f))(f).
\ee

In \cite{DTM3} the following one-sided universal discretization condition on the
collection of subspaces was used.

 \begin{Definition}[\cite{DTM3}]\label{ID1a}
     Let $1\le p <\infty$. We say that a set $\xi:= \{\xi^j\}_{j=1}^m \subset
     \Omega $ provides {\it one-sided $L_p$-universal discretization with
     parameter $D\ge 1$} for a
     collection $\cX:= \{X(n)\}_{n=1}^k$ of finite-dimensional  linear subspaces
     $X(n)$  if we have 
     \be\label{I3a}
     \|f\|_p \le D\left(\frac{1}{m} \sum_{j=1}^m |f(\xi^j)|^p\right)^{1/p} \quad \text{for any}\quad f\in \bigcup_{n=1}^k X(n) .
     \ee
\end{Definition}

In our terms the condition~\eqref{I3a} means that
$\bigcup\limits_{n=1}^k X(n) \in \LD(m,p,D)$
and the points $\{\xi^j\}_{j=1}^m$ provide this property. So we will say that the
set $\xi$ provides universal LDI($p$).

In this paper we use the following weaker condition.

\begin{Definition}\label{UDD1}
    Let $1\le p <\infty$. We say that a set $\xi:= \{\xi^j\}_{j=1}^m \subset
    \Omega $ provides {\it universal LDI$(p,\infty)$  with parameter $D\ge 1$} for the collection $\cX:=
    \{X(n)\}_{n=1}^k$ of finite-dimensional linear subspaces $X(n)$ if $\xi$ provides the property $\bigcup\limits_{n=1}^k
    X(n)\in \LD(m,p,\infty,D)$, i.e.
    \be\label{I3}
    \|f\|_p \le D\max_{1\le j\le m} |f(\xi^j)| \quad \text{for any}\quad f\in \bigcup_{n=1}^k X(n).
    \ee
\end{Definition}

The following two Lebesgue-type inequalities were proved in \cite{DTM3} for Algorithms $\ell p$ and $\ell p^s$. We only formulate those parts of these theorems from \cite{DTM3}, which 
are related to our results. 

 \begin{oldTheorem}\label{ubT3a} Let $m$, $v$, $N$ be natural numbers such
     that $v\le N$.  Let $\D_N\subset \C(\Og)$ be  a system of $N$ elements.
     Assume that  there exists a set $\xi:= \{\xi^j\}_{j=1}^m \subset \Omega $,
     which provides universal LDI$(p)$  property (\ref{I3a}) with $1\le p<\infty$
     for the collection $\cX_v(\D_N)$. Then for   any  function $ f \in
     \C(\Omega)$ we have 
     \begin{equation*}
  \|f-\ell p(\xi,\cX_v(\D_N))(f)\|_p \le  (2D +1) \sigma_v(f,\D_N)_\infty.
\end{equation*}
 \end{oldTheorem}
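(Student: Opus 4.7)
The plan is a standard Lebesgue-type argument: pick a best $v$-term approximant $g^*$ in the uniform norm, observe that it lies in some $L^*\in\cX_v(\D_N)$, and then compare the algorithm's output against it, using the universal LDI($p$) to pass from the discrete $L_p$ norm on $\xi$ back to the continuous $L_p$ norm. Since $\cX_v(\D_N)$ is finite (it has $\binom{N}{v}$ elements) and each of its members is finite-dimensional, the minima defining the algorithm and the best $v$-term approximation are both attained, so we may fix $L^*\in\cX_v(\D_N)$ and $g^*\in L^*$ with $\|f-g^*\|_\infty=\sigma_v(f,\D_N)_\infty$.

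By the defining optimality of $L(\xi,f)$ as an argmin over $\cX_v(\D_N)$,
$$
\|f-\ell p(\xi,\cX_v(\D_N))(f)\|_p \le \|f-\ell p(\xi,L^*)(f)\|_p,
$$
so setting $u:=\ell p(\xi,L^*)(f)\in L^*$ and inserting $g^*$ gives
$$
\|f-u\|_p \le \|f-g^*\|_p+\|g^*-u\|_p \le \sigma_v(f,\D_N)_\infty+\|g^*-u\|_p,
$$
using $\|\cdot\|_p\le\|\cdot\|_\infty$ (since $\mu$ is a probability measure). The crucial point is that $g^*-u\in L^*\subset\bigcup_{L\in\cX_v(\D_N)}L$, so the universal LDI($p$) of assumption \eqref{I3a} applies directly to $g^*-u$, giving $\|g^*-u\|_p \le D\|S(g^*-u,\xi)\|_p$. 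A discrete triangle inequality combined with the defining optimality of $u$ within $L^*$ (so that $\|S(f-u,\xi)\|_p\le \|S(f-g^*,\xi)\|_p$) and the trivial bound $\|S(\cdot,\xi)\|_p\le\|\cdot\|_\infty$ then yields
$$
\|g^*-u\|_p \le D\bigl(\|S(f-g^*,\xi)\|_p+\|S(f-u,\xi)\|_p\bigr)\le 2D\|f-g^*\|_\infty = 2D\sigma_v(f,\D_N)_\infty.
$$
Substituting back gives the announced $(2D+1)\sigma_v(f,\D_N)_\infty$ bound.

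There is no real technical obstacle — the proof is a short chain of triangle inequalities — but one structural point is essential: we do not know in advance which $L^*\in\cX_v(\D_N)$ contains a best uniform $v$-term approximant, so the discretization inequality must hold \emph{uniformly} across the whole collection $\cX_v(\D_N)$ in order to be applicable to the difference $g^*-u$ sitting in that unknown $L^*$. This is exactly what the universal LDI($p$) hypothesis provides, and it explains why a single-subspace LDI would not suffice.
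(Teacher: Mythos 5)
Your proof is correct and follows essentially the same route the paper takes for this family of results: the statement itself is quoted from \cite{DTM3}, but your chain of triangle inequalities is exactly the mechanism behind Theorem~\ref{BT1} combined with the argmin definition of the algorithm, which is how the paper proves the analogous Theorems~\ref{ubT3} and~\ref{ubT5}. In particular, your key observations (apply the universal LDI($p$) to the difference $g^*-u$ lying in the unknown subspace $L^*$, use the discrete optimality of $\ell p(\xi,L^*)(f)$, and bound the discrete norm by the uniform norm) coincide with the paper's argument, so there is nothing to add.
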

 
  \begin{oldTheorem}\label{ubT5a} Let $m$, $v$, $N$ be natural numbers such
      that $2v\le N$.  Let $\D_N\subset \C(\Og)$ be  a system of $N$ elements.
      Assume that  there exists a set $\xi:= \{\xi^j\}_{j=1}^m \subset \Omega $,
      which provides the universal LDI$(p)$ property (\ref{I3a})  with $1\le p<\infty$
      for the collection $\cX_{2v}(\D_N)$. Then for   any  function $ f \in
      \C(\Omega)$ we have 
      \begin{equation*}
  \|f-\ell p^s(\xi,\cX_v(\D_N))(f))\|_p \le  (2D +1) \sigma_v(f,\D_N)_\infty.
\end{equation*}
 \end{oldTheorem}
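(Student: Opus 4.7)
The plan is to mimic the standard Lebesgue-type argument used for Algorithm $\ell p$ (as in Theorem~\ref{ubT3a}), but pay attention to the fact that Algorithm $\ell p^s$ only controls the discrete $L_p(\xi)$-norm of the residual, not the continuous $L_p$-norm. The doubling of $v$ to $2v$ will enter precisely when we need to apply the universal discretization to a difference of two $v$-term approximants.

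First, I would fix an arbitrary $g\in\Sigma_v(\D_N)$, so $g$ lies in some subspace $L_g\in\cX_v(\D_N)$. Write $h:=\ell p^s(\xi,\cX_v(\D_N))(f)$, which belongs to some $L_h\in\cX_v(\D_N)$. Then $h-g$ lies in a subspace spanned by at most $2v$ elements of $\D_N$, hence in some element of $\cX_{2v}(\D_N)$. Here is where the hypothesis $2v\le N$ and the universal LDI$(p)$ for the collection $\cX_{2v}(\D_N)$ are used; applying~\eqref{I3a} to $h-g$ yields
\[
\|h-g\|_p \le D\,\|S(h-g,\xi)\|_p
\le D\bigl(\|S(f-h,\xi)\|_p + \|S(f-g,\xi)\|_p\bigr).
\]

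Next, I would invoke the optimality built into Algorithm $\ell p^s$. By~\eqref{ub15},
\[
\|S(f-h,\xi)\|_p \;=\; \sigma_v(f,\D_N)_{L_p(\xi)} \;\le\; \|S(f-g,\xi)\|_p,
\]
because $g$ is itself a $v$-term approximant in the dictionary. Since the discrete $L_p(\xi)$-norm is trivially dominated by the uniform norm, $\|S(f-g,\xi)\|_p \le \|f-g\|_\infty$. Combining these estimates gives $\|h-g\|_p \le 2D\,\|f-g\|_\infty$, and the triangle inequality together with $\|f-g\|_p \le \|f-g\|_\infty$ yields
\[
\|f-h\|_p \;\le\; \|f-g\|_p + \|h-g\|_p \;\le\; (2D+1)\,\|f-g\|_\infty.
\]
Taking the infimum over $g\in\Sigma_v(\D_N)$ produces the desired inequality.

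The only real conceptual step is the one I highlighted: recognizing that the difference of two elements from (possibly different) subspaces in $\cX_v(\D_N)$ need not lie in any single $v$-dimensional subspace of the collection, and that we must therefore assume the universal LDI on $\cX_{2v}(\D_N)$ rather than on $\cX_v(\D_N)$. Everything else is a routine concatenation of (i) the LDI assumption, (ii) the minimization property defining $\ell p^s$, and (iii) the trivial bound of the discrete $L_p(\xi)$-norm by $\|\cdot\|_\infty$. No continuity of $h$ or measurability issue arises, since $\ell p^s(\xi,\cX_v(\D_N))(f)$ is by construction a finite linear combination of elements of $\D_N\subset C(\Omega)$.
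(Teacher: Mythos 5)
Your proof is correct and takes essentially the same route as the paper's argument (given there for the LDI$(p,\infty)$ analogue, Theorem~\ref{ubT5}): use the optimality identity \eqref{ub15} for $\ell p^s$, observe that the difference of the two $v$-term approximants lies in $\Sigma_{2v}(\D_N)$ so that the universal LDI on $\cX_{2v}(\D_N)$ applies (this is where $2v\le N$ enters), and conclude by the triangle inequality. The only cosmetic difference is that you compare with an arbitrary $g\in\Sigma_v(\D_N)$ and pass to the infimum at the end, whereas the paper fixes a best $L_\infty$-approximant from $\Sigma_v(\D_N)$.
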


We prove two conditional theorems here: Theorem~\ref{ubT3} for Algorithm $\ell(p,\infty)$ 
and Theorem~\ref{ubT5} for Algorithm $\ell p^s$. 

\begin{Theorem}\label{ubT3}
Let $m$, $v$, $N$ be natural numbers such that
$v\le N$.  Let $\D_N\subset \C(\Og)$ be  a system of $N$ elements. Assume
that  there exists a set $\xi:= \{\xi^j\}_{j=1}^m \subset \Omega $, which
provides the universal LDI$(p,\infty$)  property (\ref{I3}) with $1\le p<\infty$
for the collection $\cX_v(\D_N)$. Then for   any  function $ f \in
\C(\Omega)$ we have
 \begin{equation*}\label{I6}
  \|f-\ell (p,\infty)(\xi,\cX_v(\D_N))(f)\|_p \le  (2D +1) \sigma_v(f,\D_N)_\infty.
\end{equation*}
 \end{Theorem}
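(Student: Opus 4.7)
The plan is to reduce Theorem~\ref{ubT3} to the single-subspace estimate already established in Theorem~\ref{BT2}, exploiting the fact that the universal LDI($p,\infty$) assumption gives this estimate with the same constant $D$ simultaneously for every $L\in\cX_v(\D_N)$.

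First I would fix an arbitrary $\varepsilon>0$ and choose a subspace $L^{*}\in\cX_v(\D_N)$ together with an element $g^{*}\in L^{*}$ such that
\[
\|f-g^{*}\|_{\infty}\le \sigma_v(f,\D_N)_{\infty}+\varepsilon.
\]
Such a pair exists since every $g\in\Sigma_v(\D_N)$ lies in some $L\in\cX_v(\D_N)$. Since $L^{*}\subset\bigcup_{n} X(n)$ in the universal LDI sense, inequality (\ref{I3}) applied to elements of $L^{*}$ gives exactly the hypothesis (\ref{B1}) of Theorem~\ref{BT2} for the subspace $X_N=L^{*}$ with the same constant $D$. Applying Theorem~\ref{BT2} to $L^{*}$ we obtain
\[
\|f-\ell\infty(\xi,L^{*})(f)\|_{p}\le (2D+1)\,d(f,L^{*})_{\infty}
\le (2D+1)\|f-g^{*}\|_{\infty}
\le (2D+1)\bigl(\sigma_v(f,\D_N)_{\infty}+\varepsilon\bigr).
\]

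Next, by the very definition of the algorithm $\ell(p,\infty)$ in~\eqref{Alg3}, the subspace $L(\xi,f)$ is chosen to minimize $\|f-\ell\infty(\xi,L)(f)\|_{p}$ over all $L\in\cX_v(\D_N)$. In particular, this minimum is bounded by the value at $L^{*}$, so
\[
\|f-\ell(p,\infty)(\xi,\cX_v(\D_N))(f)\|_{p}
=\|f-\ell\infty(\xi,L(\xi,f))(f)\|_{p}
\le \|f-\ell\infty(\xi,L^{*})(f)\|_{p}
\le (2D+1)\bigl(\sigma_v(f,\D_N)_{\infty}+\varepsilon\bigr).
\]
Letting $\varepsilon\to 0$ yields the claim.

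There is essentially no major obstacle: the work has already been done in Theorem~\ref{BT2} for a single subspace, and the role of universal discretization is precisely to let us apply that theorem with a constant $D$ that does not depend on the choice of $L\in\cX_v(\D_N)$. The only point that requires a little care is that the best $v$-term approximant need not be attained, which is why I introduce the slack parameter $\varepsilon$ and pass to the limit at the end; this is harmless because the class $\cX_v(\D_N)$ itself is finite (there are only $\binom{N}{v}$ subspaces), so the outer minimum defining $L(\xi,f)$ is attained.
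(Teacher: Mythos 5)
Your proposal is correct and follows essentially the same route as the paper: both reduce the statement to Theorem~\ref{BT2}, applied to subspaces of the collection $\cX_v(\D_N)$ with the common constant $D$ supplied by the universal LDI($p,\infty$), and then invoke the minimizing property in the definition~\eqref{Alg3} of the algorithm $\ell(p,\infty)$. The only cosmetic difference is that the paper applies Theorem~\ref{BT2} to every subspace $X(n)$ and takes $\min_n d(f,X(n))_\infty=\sigma_v(f,\D_N)_\infty$, whereas you apply it to a near-optimal subspace and pass to the limit in $\varepsilon$; both handle the possible non-attainment of the best $v$-term approximant equally well.
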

\begin{proof}  
Suppose that   a set $\xi:= \{\xi^j\}_{j=1}^m \subset \Omega $ provides
the universal LDI($p,\infty$) for the collection $\cX_v(\D_N)$. Then condition
(\ref{B1}) is satisfied for all $X(n)$ from the collection $\cX_v(\D_N)$
with the same $D$.  Thus, we can apply Theorem~\ref{BT2} with the same set of
    points $\xi$ to each subspace $X(n)$. This yields the inequality
\be\label{A3}
\|f-\ell \infty(\xi,X(n))(f)\|_p \le (2D +1)d(f, X(n))_{\infty}.
\ee
for all $n=1,\dots,\binom{N}{v}$.
 
Then inequality (\ref{A3}) and the definition (\ref{Alg3}) imply that
\begin{equation*}
 \|f-\ell (p,\infty)(\xi,\cX)(f)\|_p \le   (2D +1)\min_{1\le n\le k}d(f, X(n))_{\infty}.
\end{equation*}
This finishes the proof.  
  \end{proof}

We prove the following analogue of Theorem~\ref{ubT3}
for Algorithm $\ell p^s$ that only uses the
function values at the points $\xi^1,\ldots,\xi^m$.

 \begin{Theorem}\label{ubT5}
 Let $m$, $v$, $N$ be natural numbers such
 that $2v\le N$.  Let $\D_N\subset \C(\Og)$ be  a system of $N$ elements.
 Assume that  there exists a set $\xi:= \{\xi^j\}_{j=1}^m \subset \Omega $,
     which provides the universal LDI$(p,\infty$) property (\ref{I3}) with $1\le p<\infty$ for the collection $\cX_{2v}(\D_N)$.
     Then for   any  function $f \in \C(\Omega)$ we have
 \be\label{ub17}
  \|f - \ell p^s(\xi,\cX_v(\D_N))(f)\|_p \le  (2D +1) \sigma_v(f,\D_N)_\infty.
 \ee
 \end{Theorem}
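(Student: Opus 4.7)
The plan is to mimic the proof of Theorem~\ref{ubT5a}, replacing the step that invokes universal LDI$(p,p)$ on $\cX_{2v}(\D_N)$ by an application of universal LDI$(p,\infty)$ on $\cX_{2v}(\D_N)$. Let $g\in L_g\in \cX_v(\D_N)$ be a best $v$-term $L_\infty$-approximant of $f$ (existence is clear, since the minimization is taken over the finite collection $\cX_v(\D_N)$ of finite-dimensional subspaces of $\C(\Omega)$), so $\|f-g\|_\infty=\sigma_v(f,\D_N)_\infty$. Set $\phi:=\ell p^s(\xi,\cX_v(\D_N))(f)\in L^s(\xi,f)\in\cX_v(\D_N)$. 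The key geometric point is that $\phi-g\in L^s(\xi,f)+L_g$, and this sum is contained in some element of $\cX_{2v}(\D_N)$ because it is spanned by at most $2v$ elements of $\D_N$; this is exactly why the hypothesis is imposed on $\cX_{2v}$ rather than on $\cX_v$.

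On that $2v$-spanned subspace the universal LDI$(p,\infty)$ hypothesis \eqref{I3} gives $\|\phi-g\|_p\le D\,\|S(\phi-g,\xi)\|_\infty$. Estimate the right-hand side by the triangle inequality on sampling vectors,
\[
\|S(\phi-g,\xi)\|_\infty \le \|S(f-\phi,\xi)\|_\infty + \|S(f-g,\xi)\|_\infty,
\]
and note that $\|S(f-g,\xi)\|_\infty\le\|f-g\|_\infty$ trivially. For the first summand we invoke the defining minimality of $\phi$: understood in the $\ell_\infty^m$-sample-norm form natural to the LDI$(p,\infty)$ setting (the direct analogue of the passage from $\ell p$ in Theorem~\ref{ubT5a} to $\ell \infty$ in Theorem~\ref{ubT3}), the algorithm gives $\|S(f-\phi,\xi)\|_\infty\le\|S(f-g,\xi)\|_\infty\le\|f-g\|_\infty$. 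Therefore $\|S(\phi-g,\xi)\|_\infty \le 2\|f-g\|_\infty$ and hence $\|\phi-g\|_p\le 2D\|f-g\|_\infty$. The final $L_p$ triangle inequality closes the argument:
\[
\|f-\phi\|_p\le\|f-g\|_p+\|\phi-g\|_p\le (1+2D)\|f-g\|_\infty=(2D+1)\sigma_v(f,\D_N)_\infty,
\]
which is \eqref{ub17}.

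The main technical obstacle is matching the sample norm used by the algorithm to the sample norm appearing in the LDI hypothesis. Since LDI$(p,\infty)$ dominates $\|\phi-g\|_p$ by $\|S(\phi-g,\xi)\|_\infty$ rather than by $\|S(\phi-g,\xi)\|_p$, the minimality step above must deliver an $\ell_\infty^m$ bound on the sample error $S(f-\phi,\xi)$. A strictly literal reading of Algorithm $\ell p^s$, which minimises $\|S(\cdot,\xi)\|_p$, would lose a factor of order $m^{1/p}$ via $\|\cdot\|_\infty\le m^{1/p}\|\cdot\|_p$ on $\R^m$ and destroy the stated constant $2D+1$; the theorem is therefore to be understood with the $\ell_\infty$-sample-norm variant of $\ell p^s$, after which the chain of inequalities above follows the familiar template of Theorems~\ref{BT2} and~\ref{ubT3}.
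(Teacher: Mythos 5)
Your argument is essentially the paper's own proof: the same decomposition via a best uniform $v$-term approximant, the triangle inequality on sample vectors, the observation that the difference of the two approximants lies in $\Sigma_{2v}(\D_N)$ so that the universal LDI$(p,\infty)$ hypothesis applies, and a final triangle inequality in $L_p$, giving the constant $2D+1$. The sample-norm mismatch you flag is genuine and is not resolved in the paper either: the paper's proof invokes \eqref{ub15} (an equality in the $L_p(\xi)$ seminorm) to conclude $\|f-u\|_{L_\infty(\xi)}\le\sigma_v(f,\D_N)_\infty$ for $u=\ell p^s(\xi,\cX_v(\D_N))(f)$, i.e.\ it silently treats the output as optimal in the sup-sample norm — exactly the reading you adopt explicitly; for the literal minimizer of $\|S(\cdot,\xi)\|_p$ that step does not follow and, as you note, the crude repair costs a factor of order $m^{1/p}$. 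So your proof establishes what the paper's proof actually establishes, with the same (acknowledged in your case, tacit in the paper's) interpretation of the algorithm's minimality in the $\ell_\infty^m$ sample norm.
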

 \begin{proof} We derive  (\ref{ub17}) from (\ref{ub15}).    Clearly, 
$$
\sigma_v(f,\D_N)_{L_\infty(\xi)} \le \sigma_v(f,\D_N )_\infty.
$$
For brevity set $u:=\ell p^s(\xi,\cX_v(\D_N))(f)$ and let $h$ be the best
$L_\infty$-approximation to $f$ from $\Sigma_v(\D_N)$.   Then (\ref{ub15})
implies that
$$
\|h - u\|_{L_\infty(\xi)} \le \|f-h\|_{L_\infty(\xi)} +\|f-u\|_{L_\infty(\xi)} 
 \le 2\sigma_v(f,\D_N)_\infty.
$$
Using that $h - u \in \Sigma_{2v}(\D_N)$, by discretization (\ref{I3}) we 
conclude that
\be\label{ub18}
\|h - u\|_{L_p(\Omega,\mu)} \le 2D \sigma_v(f,\D_N)_\infty.
\ee
Finally,
$$
\|f-u\|_{L_p(\Omega,\mu)} \le \|f-h\|_{L_p(\Omega,\mu)} + \|h - u\|_{L_p(\Omega,\mu)}.
$$
This and (\ref{ub18}) prove (\ref{ub17}).

\end{proof}

We now discuss an application of Theorem~\ref{ubT5}  to optimal sampling recovery.
We allow any mapping $\Psi : \bbC^m \to   L_p(\Omega,\mu)$  and for a function class $\bF\subset \cC(\Omega)$ define
$$
\varrho_m^o(\bF,L_p) := \inf_{\xi } \inf_{\Psi} \sup_{f\in \bF}\|f-\Psi(f(\xi^1),\dots,f(\xi^m))\|_p.
$$
Here, index {\it o} stands for optimal. The following Theorem~\ref{ubT6} is a direct corollary of 
Theorem~\ref{ubT5}.

\begin{Theorem}\label{ubT6}
    Let $m$, $v$, $N$ be natural numbers such that
    $2v\le N$.   Let $1\le p<\infty$. Assume that a system $\D_N \subset \C(\Og)$ is such
    that there exists a set $\xi:= \{\xi^j\}_{j=1}^m \subset \Omega $, which
    provides the universal LDI$(p,\infty$), (\ref{I3}),
    for the collection $\cX_{2v}(\D_N)$.  Then for   any compact subset $\bF$ of
    $\C(\Omega)$
 \begin{equation*}
 \varrho_{m}^{o}(\bF,L_p(\Omega,\mu)) \le  (2D +1) \sigma_{v}(\bF,\D_N)_\infty.
\end{equation*}
 \end{Theorem}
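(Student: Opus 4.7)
The plan is to exhibit an explicit recovery map $\Psi$ realizing the bound and then invoke Theorem~\ref{ubT5}. The key observation is that Algorithm $\ell p^s$ was designed to depend only on the sample vector $S(f,\xi)=(f(\xi^1),\dots,f(\xi^m))$: it first selects $L^s(\xi,f)$ by minimizing the discrete residual $\|S(f-\ell p(\xi,L)(f),\xi)\|_p$ over $L\in\cX_v(\D_N)$ (which only uses samples), and then outputs $\ell p(\xi,L^s(\xi,f))(f)$, whose computation is again a discrete least-$L_p$ fit that uses only samples. Hence there is a well-defined mapping $\Psi\colon\bbC^m\to L_p(\Omega,\mu)$ such that $\Psi(S(f,\xi))=\ell p^s(\xi,\cX_v(\D_N))(f)$ for every $f\in\cC(\Omega)$.

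With this $\Psi$ and the given point set $\xi$, I would apply Theorem~\ref{ubT5} pointwise: since $\xi$ provides the universal LDI$(p,\infty)$ property~(\ref{I3}) for the collection $\cX_{2v}(\D_N)$, Theorem~\ref{ubT5} yields
$$
\|f-\Psi(f(\xi^1),\dots,f(\xi^m))\|_p = \|f-\ell p^s(\xi,\cX_v(\D_N))(f)\|_p \le (2D+1)\,\sigma_v(f,\D_N)_\infty
$$
for every $f\in\cC(\Omega)$. Taking the supremum over $f\in \bF$ on both sides, the right-hand side becomes $(2D+1)\,\sigma_v(\bF,\D_N)_\infty$ by the definition of $\sigma_v(\bF,\D_N)_\infty$.

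Finally, since $\varrho_m^o(\bF,L_p(\Omega,\mu))$ is defined as an infimum over all sampling sets and all mappings $\Psi\colon\bbC^m\to L_p(\Omega,\mu)$, the bound obtained with this particular choice of $\xi$ and $\Psi$ is an upper bound on the infimum, giving exactly
$$
\varrho_m^o(\bF,L_p(\Omega,\mu)) \le (2D+1)\,\sigma_v(\bF,\D_N)_\infty.
$$
There is essentially no obstacle here: the only subtlety is to note that Algorithm $\ell p^s$ genuinely factors through the sample vector (so it is admissible in the definition of $\varrho_m^o$, which allows arbitrary, even nonlinear, mappings $\Psi$), and that possible non-uniqueness of the minimizers in the definition of $\ell p^s$ is harmless because any measurable selection suffices to define a valid $\Psi$.
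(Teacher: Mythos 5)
Your proof is correct and matches the paper's intent: the paper simply states Theorem~\ref{ubT6} as a direct corollary of Theorem~\ref{ubT5}, and your argument is exactly the natural unpacking of that claim (Algorithm $\ell p^s$ factors through the sample vector, so it furnishes an admissible $\Psi$, and taking the supremum over $f\in\bF$ followed by the infimum in the definition of $\varrho_m^o$ gives the bound). No gaps.
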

 
\section{Some remarks on the discretization of the uniform norm}
\label{C}
Discretization of the uniform norm is an actively developing area of research, the reader can find recent results in the papers \cite{KKT}, \cite{KoTe}, \cite{VT2023}, \cite{KPUU}. 

Let $\Phi:=\{\ff_i\}_{i=1}^N$ be a real {linearly} independent
system satisfying the condition 
\be\label{CE}
\sum_{i=1}^N \ff_i^2 \le Nt^2.
\ee
For 
$$
f = \sum_{i=1}^N a_i(f)\ff_i
$$
we define the vector 
$$
A(f):= (a_1(f),\dots,a_N(f)) \in \R^N.
$$
Let $X_N := \sp\{\ff_1,\dots,\ff_N\}$ and
$$
B_\Phi(L_\infty):= \{A(f): f\in X_N, \quad \|f\|_\infty \le 1\} \subset \R^N.
$$

We present here some results similar to those from \cite{KaTe03} (see also \cite{VTbookMA}, pp. 344--345). 
We begin with the following conditional statement.
\begin{Theorem}\label{CT1} Assume that the system $\Phi$ satisfies (\ref{CE}) and has the following properties:
\begin{equation}\label{C1}
\vol(B_\Phi(L_\infty))^{1/N} \le KN^{-1/2}, 
\end{equation}
    and $X_N\in \LD(m,\infty, D)$, i.e. {for some set $\xi := \{\xi^j\}_{j=1}^m$}
\begin{equation}\label{C2}
    \forall f \in X_N \qquad \|f\|_\infty\le D\max_{1\le j\le m}|f(\xi^j)|.
\end{equation}
Then there exists an absolute positive constant $c$ such that
$$
m\ge \frac{N}{e}e^{c(t K D)^{-2}}.
$$
\end{Theorem}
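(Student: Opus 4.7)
The plan is to reformulate the LDI as a polar-duality inclusion in $\R^N$ and then extract the lower bound on $m$ by comparing the two volumes that appear. Set $\Phi(\omega):=(\varphi_1(\omega),\dots,\varphi_N(\omega))\in\R^N$, so that $f=\sum_i a_i\varphi_i$ satisfies $f(\omega)=\langle\Phi(\omega),a\rangle$, and let
\[
P:=\mathrm{conv}\{\pm\Phi(\xi^j):j=1,\dots,m\}\subset\R^N.
\]
The LDI~\eqref{C2} reads $|\langle\Phi(\omega),a\rangle|\le D\max_j|\langle\Phi(\xi^j),a\rangle|$ for every $\omega\in\Omega$ and $a\in\R^N$, which by convex separation is equivalent to $\Phi(\omega)\in D\cdot P$ for all $\omega$. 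Combined with the bipolar identity $B_\Phi(L_\infty)^\circ=\overline{\mathrm{conv}}\{\pm\Phi(\omega):\omega\in\Omega\}$, this yields the key inclusion $B_\Phi(L_\infty)^\circ\subseteq D\cdot P$, and hence $\vol(B_\Phi(L_\infty)^\circ)\le D^N\vol(P)$.

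The next step is to bound both volumes. For $B_\Phi(L_\infty)^\circ$, the Bourgain--Milman inequality $\vol(B)\vol(B^\circ)\ge c_0^N\vol(B_2^N)^2$ (where $B_2^N$ is the unit Euclidean ball, which satisfies $\vol(B_2^N)^{1/N}\asymp N^{-1/2}$) together with assumption~\eqref{C1} gives
\[
\vol(B_\Phi(L_\infty)^\circ)^{1/N}\ge \frac{c_1}{K\sqrt{N}}.
\]
For $P$, assumption~\eqref{CE} implies $\|\Phi(\xi^j)\|_2\le t\sqrt{N}$, so $P$ is the symmetric convex hull of $2m$ vectors inside the Euclidean ball of radius $t\sqrt{N}$; the classical Carl--Pajor / B\'ar\'any--F\"uredi volume bound for polytopes with few vertices then yields
\[
\vol(P)^{1/N}\le C_2\, t\sqrt{\tfrac{\log(1+m/N)}{N}}.
\]

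Inserting these into the volume inequality and cancelling $\sqrt{N}$ gives $c_1/K\le C_2tD\sqrt{\log(1+m/N)}$, whence $\log(1+m/N)\ge c'/(tKD)^2$ and therefore $m\ge N(e^{c'/(tKD)^2}-1)$. The LDI forces the sampling operator on $X_N$ to be injective, so a priori $m\ge N$; combining with the previous display rearranges to $m\ge(N/e)\,e^{c/(tKD)^2}$ with an absolute constant $c>0$. The main technical obstacle is securing the sharper $\sqrt{\log(1+m/N)}$ dependence in the volume bound on $P$: the naive Urysohn/Gaussian-mean-width estimate delivers only $\sqrt{\log m}$, which collapses the factor $N$ inside the exponent and yields a useless bound. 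The Carl--Pajor sharpening, which exploits the fact that $P$ has only $2m$ vertices relative to the ambient dimension $N$, is precisely what makes the lower bound exponential in $(tKD)^{-2}$ while preserving the linear-in-$N$ prefactor.
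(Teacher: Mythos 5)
Your proof is correct, and it reaches the bound by a dual route rather than the paper's. The paper applies Gluskin's theorem directly: with $\bz(\omega)=(\ff_1(\omega),\dots,\ff_N(\omega))$ and $\by^j=\bz(\xi^j)/|\bz(\xi^j)|$, the LDI gives the inclusion $N^{-1/2}t^{-1}D^{-1}W(Y)\subseteq B_\Phi(L_\infty)$ for the slab intersection $W(Y)=\{\bx:|\langle \bx,\by^j\rangle|\le 1\}$, and Gluskin's lower bound $\vol(W(Y))^{1/N}\ge C(1+\ln(m/N))^{-1/2}$ is played off against (\ref{C1}). You instead pass to polars: the LDI becomes $B_\Phi(L_\infty)^\circ\subseteq D\cdot\mathrm{conv}\{\pm\Phi(\xi^j)\}$, you lower-bound $\vol(B_\Phi(L_\infty)^\circ)$ via Bourgain--Milman together with (\ref{C1}), and upper-bound the volume of the polytope with $2m$ vertices of Euclidean norm at most $t\sqrt N$ (this is where (\ref{CE}) enters, exactly as it enters the paper through $|\bz(\omega)|\le t\sqrt N$) by the Carl--Pajor/B\'ar\'any--F\"uredi estimate with the crucial $\sqrt{\log(1+m/N)}$ dependence; both arguments then finish identically, using $m\ge N$ (injectivity forced by the LDI) to absorb the case of large $tKD$. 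Since Gluskin's slab-volume theorem is precisely the polar counterpart of Carl--Pajor obtained through a volume-product bound, you are in effect re-deriving the paper's key lemma inside the proof: the paper's version is lighter in machinery (one cited volume estimate, no Bourgain--Milman), while yours makes the polar-duality geometry explicit and sidesteps the normalization of the vectors $\bz(\xi^j)$ (e.g.\ the possibility that some $\bz(\xi^j)=0$, which the paper's normalization step quietly ignores). The one point worth writing out carefully is the final arithmetic: from $m\ge N(e^{c'/(tKD)^2}-1)$ alone the bound degenerates when $tKD$ is large, so the case split on whether $c'/(tKD)^2$ exceeds $1$, using $m\ge N$ in the small-exponent case, should be stated explicitly as you indicate.
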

\begin{proof} We use the following result of E.~Gluskin \cite{G}. Here
    $\langle\bx^1,\bx^2\rangle=\sum x^1_ix^2_i$ is the usual scalar product and
    $|\bx|=|\langle\bx,\bx\rangle|^{1/2}$ is the {Euclidean} norm.
\begin{oldTheorem}\label{CT2} Let $Y=\{\by^1,\dots,\by^S\} \subset \R^N$, $|\by^j|=1$, $j=1,\dots,S$, $S\ge N$, and 
$$
W(Y) := \{\bx\in \R^N:|\langle\bx,\by^j\rangle| \le 1,\quad j=1,\dots,S\}.
$$
Then
$$
\vol(W(Y))^{1/N} \ge C(1+\ln (S/N))^{-1/2}.
$$
\end{oldTheorem}

By the assumption (\ref{C2}) we have $m\geq N$ and 
\be\label{C2p}
    \{A(f)\colon f\in X_N,\quad |f(\xi^j)|\le D^{-1},\;j=1,\ldots,m\} \subseteq B_\Phi(L_\infty). 
\ee
Further, for any $\omega\in \Omega$
$$
|f(\omega)| = |\langle A(f),\bz(\omega)\rangle|,\qquad \bz(\omega) :=
    (\ff_1(\omega),\dots,\ff_N(\omega)) \in \R^N.
$$
    Note that $|\bz(\omega)|\le(Nt^2)^{1/2}$ by (\ref{CE}).

    It is clear that the condition
$$
    |f(\omega)| \le D^{-1}\quad\mbox{for all $\omega\in\{\xi^j\}_{j=1}^m$}
$$
is satisfied if
$$
|\langle A(f), \bz(\xi^j)\rangle| \le D^{-1}, \quad j=1,\dots,m.
$$
Now let
$$
\by^j := \bz(\xi^j)/|\bz(\xi^j)|,\qquad Y:=\{\by^j,\, j=1,\dots, m\}.
$$
By Theorem~\ref{CT2} with $S=m$ we obtain
\be\label{C3p}
\vol(W(Y))^{1/N} \ge C(1+\ln (S/N))^{-1/2} . 
\ee
The condition
$$
|\langle A(f),\by^j\rangle|\le N^{-1/2}t^{-1}D^{-1}, \quad j=1,\dots,m
$$
implies that
$$
|\langle A(f), \bz(\xi^j)\rangle|=|\langle A(f),\by^j\rangle|\cdot |\bz(\xi^j)|
$$
$$
\le (Nt^2)^{1/2} N^{-1/2}t^{-1}D^{-1} =  D^{-1}, \quad j=1,\dots,m.
$$
Therefore, from (\ref{C2p}) and (\ref{C3p}) we get
$$
\vol(B_\Phi(L_\infty))^{1/N} \ge CN^{-1/2}t^{-1}D^{-1}(1+\ln (m/N))^{-1/2}.
$$
We now use our assumption (\ref{C1}) and conclude that
\begin{equation*}\label{C4p}
tKD \ge C'(\ln (em/N))^{-1/2}.
\end{equation*}
This completes the proof.
\end{proof}

We now discuss a corollary of Theorem~\ref{CT1}.
Let $\Phi:=\{\ff_i\}_{i=1}^N$ be a uniformly bounded
Sidon system:
\be\label{Si}
\bt \sum_{i=1}^N |a_i| \le \|\sum_{i=1}^N a_i\ff_i\|_\infty, \quad \forall \{a_i\}_{i=1}^N;
\quad \|\ff_i\|_\infty \le B,\quad i=1,\dots,N.
\ee

\begin{Corollary}\label{CC1} Assume that the system $\Phi$ satisfies (\ref{Si})
    and $X_N\in\LD(m,\infty,D)$.
Then there exists an absolute positive constant $c$ such that
$$
m\ge \frac{N}{e}e^{c\frac{\bt^2N}{B^2D^2}}.
$$
\end{Corollary}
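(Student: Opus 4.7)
The plan is to deduce the corollary directly from Theorem~\ref{CT1} by verifying the two hypotheses (\ref{CE}) and (\ref{C1}) with appropriate parameters $t$ and $K$ for a uniformly bounded Sidon system.

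First I would handle the pointwise bound (\ref{CE}). Since $\|\varphi_i\|_\infty \le B$, we have $\sum_{i=1}^N \varphi_i^2 \le NB^2$ pointwise on $\Omega$, so (\ref{CE}) holds with $t = B$.

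Next, and this is the heart of the argument, I would extract the volume estimate (\ref{C1}) from the Sidon property. By the left inequality in (\ref{Si}), any $f = \sum a_i \varphi_i \in X_N$ with $\|f\|_\infty \le 1$ satisfies $\sum_{i=1}^N |a_i| \le 1/\beta$. Therefore
\begin{equation*}
B_\Phi(L_\infty) \subseteq \tfrac{1}{\beta} B_1^N, \qquad B_1^N := \{\bx\in\mathbb R^N\colon \|\bx\|_{\ell_1}\le 1\}.
\end{equation*}
Using $\vol(B_1^N) = 2^N/N!$ and Stirling's bound $(N!)^{1/N} \ge cN$ with an absolute constant $c>0$, we obtain
\begin{equation*}
\vol(B_\Phi(L_\infty))^{1/N} \le \frac{2}{\beta}\,(N!)^{-1/N} \le \frac{C_0}{\beta N} \le \frac{C_0}{\beta N^{1/2}}\cdot N^{-1/2},
\end{equation*}
so that (\ref{C1}) holds with $K = C_0/(\beta N^{1/2})$.

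Finally, I would plug $t=B$ and $K = C_0/(\beta N^{1/2})$ into the conclusion of Theorem~\ref{CT1}. Then
\begin{equation*}
(tKD)^{-2} = \left(\frac{C_0 BD}{\beta N^{1/2}}\right)^{-2} = \frac{\beta^2 N}{C_0^2\, B^2 D^2},
\end{equation*}
and Theorem~\ref{CT1} yields $m \ge \frac{N}{e}\exp\!\bigl(c'\,\beta^2 N/(B^2 D^2)\bigr)$ after absorbing $C_0^2$ into the absolute constant. There is no serious obstacle here: all the analytic work is already done inside Theorem~\ref{CT1} (via Gluskin's volume inequality), and the Sidon assumption serves only to furnish the two required inputs. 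The only minor subtlety is the use of Stirling to convert $(N!)^{-1/N}$ into the required form $K N^{-1/2}$, and this is why one loses a factor $N^{-1/2}$ that ultimately produces the $N$ in the exponent of the final bound.
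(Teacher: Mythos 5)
Your proposal is correct and follows essentially the same route as the paper: verify (\ref{CE}) with $t=B$, derive the volume bound (\ref{C1}) with $K$ of order $\beta^{-1}N^{-1/2}$ from the Sidon property, and apply Theorem~\ref{CT1}. The only difference is that you spell out the volume estimate via the inclusion $B_\Phi(L_\infty)\subseteq \beta^{-1}B_1^N$ and Stirling, which the paper states without detail, and that computation is correct.
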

\begin{proof} Condition (\ref{Si}) implies that the system $\Phi$ satisfies (\ref{CE}) with $t=B$ and 
\begin{equation*}\label{C1c}
\vol(B_\Phi(L_\infty))^{1/N} \le C_1\bt^{-1}N^{-1}  
\end{equation*}
with an absolute constant $C_1$. 
Applying Theorem~\ref{CT1} with $t=B$, $K=C_1\bt^{-1}N^{-1/2}$, and $D$, we complete the proof.
\end{proof}

The Sidon property is connected to other interesting properties of functional
systems. This gives more examples of systems with poor $L_\infty$
discretization.

Recall the subgaussian norm
$$
\|\varphi\|_{\psi_2} := \inf\left\{C>0\colon \int_\Omega\exp(\varphi^2/C^2)d\mu\le 2\right\}.
$$
We call a system $\varphi_1,\ldots,\varphi_n$ subgaussian if $\|\sum_{k=1}^n a_k
\varphi_k\|_{\psi_2}\le
C(\sum_{k=1}^n a_k^2)^{1/2}$ for all coefficients and some fixed $C$.

\begin{Example}
    For any uniformly bounded orthonormal subgaussian system $\{\varphi_i\}_{i=1}^N$ the
    space $X_N := \sp\{\varphi_i\}_{i=1}^N$ requires exponential number of
    points for $L_\infty$-discretization.
\end{Example}

Indeed, it was proven in~\cite[Theorem~1.7]{BL17} that such system has a Sidon subsystem of
size $cN$.

Recall some probabilistic notations: $\mathsf{E}$ means expectation,
and $\mathcal N(0,1)$ is the standard normal distribution.
\begin{Example}
    For any uniformly bounded orthonormal system $\{\varphi_i\}_{i=1}^N$ that is
    randomly Sidon:
    $$
    \mathsf{E} \|\sum_{i=1}^N \mathbf{g}_i a_i \varphi_i \|_\infty \ge \beta\|a\|_1,
    \quad \mathbf{g}_i \sim \mathcal N(0,1),\quad\forall a\in\R^N,
    $$
    the space $X_N := \sp\{\varphi_i\}_{i=1}^N$ requires exponential number of
    points for $L_\infty$-discretization.
\end{Example}

Indeed, it was proven in~\cite{P18} that the random Sidon property for
$\Phi:=\{\varphi_i\}_{i=1}^N$ implies
sidonicity of the system $\Phi^{(4)} :=
\{\varphi_i(x^1)\cdots\varphi_i(x^4)\}_{i=1}^N$.
If some points $\{\xi^j\}_{j=1}^m$ discretize $\sp\Phi$, then the points
$\{(\xi^{j_1},\xi^{j_2})\}$ discretize $\sp\Phi^{(2)}$:
\begin{multline*}
|\sum_{k=1}^N a_k \varphi_k(x)\varphi_k(y)| = |\sum_{k=1}^N
    (a_k\varphi_k(y))\varphi_k(x)| \le D \max_{1\le j\le m}|\sum_{k=1}^N
    (a_k\varphi_k(y))\varphi_k(\xi^j)| = \\
    = D \max_{1\le j\le m}|\sum_{k=1}^N (a_k\varphi_k(\xi^j))\varphi_k(y)| \le
    D^2 \max_{1\le j\le m} \max_{1\le l\le m} |\sum_{k=1}^N a_k\varphi_k(\xi^j)\varphi_k(\xi^l)|.
\end{multline*}
Further, points
$\{(\xi^{j_1},\xi^{j_2},\xi^{j_3},\xi^{j_4})\}$ discretize Sidon system $\sp\Phi^{(4)}$
so the number of points is exponential in $N$.

{\bf Acknowledgements.} The third author is grateful to D. Krieg for bringing the paper \cite{KW} to his
attention.

\Addresses

\end{document}